\newtheorem{remark}[theorem]{Remark}
\newcommand{\bRplus}{{\mathbb R}_{>0}}
\newcommand{\RZ}{{\mathbb R} \slash {\mathbb Z}}
\newcommand{\bR}{{\mathbb R}}
\newcommand{\bD}{{\mathbb D}}
\newcommand{\bH}{{\mathbb H}}
\newcommand{\Ass}{\mathcal{C}}
\newcommand{\spa}{\operatorname{span}}
\newcommand{\ratio}{{\mathfrak r}}
\newcommand{\drho}{\;{\rm d}\rho}
\newcommand{\Vh}{\underline{V}^h}
\newcommand{\Id}{\rm Id}
\newcommand{\deldel}[1]{\frac{\delta}{{\delta}#1}}
\newcommand{\dd}[1]{\frac{\rm d}{{\rm d}#1}}
\newcommand{\ddt}{\dd{t}}
\newcommand{\ek}{e}
\newcommand{\ttau}{\Delta t}
\newcommand{\BGNwf}{\mathcal{U}}
\newcommand{\BGNwfwf}{\mathcal{W}}
\newcommand{\BGNpwf}{\mathcal{P}}
\newcommand{\BGNpwfwf}{\mathcal{Q}}
\def\epsilon{\varepsilon}
\newcommand{\mat}[1]{\underline{\underline{#1}}\rule{0pt}{0pt}}
\newcommand{\Ds}{{D}_s}
\newcommand{\mDs}{{\widehat{D}}_s}
\newcommand{\errorXx}{\|\Gamma - \Gamma^h\|_{L^\infty}}
\newcommand{\revised}[1]{{#1}}
\begin{document}
\title{
Stable discretizations of elastic flow \\ in {R}iemannian manifolds
}
\author{John W. Barrett\footnotemark[2] \and 
        Harald Garcke\footnotemark[3]\ \and 
        Robert N\"urnberg\footnotemark[2]}

\renewcommand{\thefootnote}{\fnsymbol{footnote}}
\footnotetext[2]{Department of Mathematics, 
Imperial College London, London, SW7 2AZ, UK}
\footnotetext[3]{Fakult{\"a}t f{\"u}r Mathematik, Universit{\"a}t Regensburg, 
93040 Regensburg, Germany}

\date{}

\maketitle

\begin{abstract}
  The elastic flow, which is the $L^2$-gradient flow of the elastic energy,
  has several applications in geometry and elasticity theory. We
  present stable discretizations for the elastic flow in two-dimensional
  Riemannian manifolds that are conformally flat, i.e.\ conformally
  equivalent to the Euclidean space. Examples include the hyperbolic
  plane, the hyperbolic disk, the elliptic plane as well as any
  conformal parameterization of a two-dimensional manifold in ${\mathbb R}^d$,
  $d\geq 3$. 
  Numerical results show the robustness of the method, as
  well as quadratic convergence with respect to the space discretization.
\end{abstract} 

\begin{keywords} 
Elastic flow, hyperbolic plane, hyperbolic disk, elliptic plane,
Riemannian manifolds, geodesic elastic flow, finite element approximation, 
stability, equidistribution
\end{keywords}

\begin{AMS} 65M60, 53C44, 53A30, 35K55 \end{AMS}
\renewcommand{\thefootnote}{\arabic{footnote}}

\pagestyle{myheadings}
\thispagestyle{plain}
\markboth{J. W. BARRETT, H. GARCKE, AND R. N\"URNBERG}
{ELASTIC FLOW IN {R}IEMANNIAN MANIFOLDS}

\setcounter{equation}{0}
\section{Introduction} \label{sec:intro}

Elastic flow of
curves in a two-dimensional Riemannian manifold $({\mathcal M}, g)$ is
given as the $L^2$-gradient flow of the elastic energy $\frac12
\int\varkappa^2_g$, where $\varkappa_g$ is the geodesic
curvature. It has been shown, see \cite{hypbol} for the general case
and \cite{DallAcquaS17preprint} for the hyperbolic plane, that the gradient flow
of the elastic energy is given as 
\begin{equation} \label{eq:g_elastflow}
\mathcal{V}_g = - (\varkappa_g)_{s_gs_g} - \tfrac12\,\varkappa_g^3 
- S_0\,\varkappa_g\,,
\end{equation}
where $\mathcal{V}_g$ is the normal velocity of the curve with respect
to the metric $g$, $\partial_{s_g} = g^{-\frac12}\,\partial_s$, $s$
denoting arclength, and $S_0$ is the sectional curvature of $g$.
The evolution law \eqref{eq:g_elastflow} decreases the curvature
energy $\frac12 \int\varkappa^2_g$, 
and long term limits are expected to be critical points of this
energy. 
These critical points are called free
elasticae, \revised{and are of interest in geometry, \cite{LangerS84}, and
mechanics, \cite{Truesdell83,Antman95}.}
In particular, let us mention that a curve is an absolute
minimizer if and only if it is a geodesic.
Recently the flow \eqref{eq:g_elastflow} was studied in
\cite{DallAcquaS17preprint,DallAcquaS18}, for the case of the
hyperbolic plane, relying on earlier results in
\cite{DziukKS02} for a flat background metric. The hyperbolic plane 
is a particular case of a manifold with non-positive
sectional curvature, which is of particular interest as the set of free
elasticae is much richer, see \cite{LangerS84}.

In this paper, we allow for a general conformally flat metric. 
Examples include
the hyperbolic plane, the hyperbolic disk, the elliptic plane, as
well as any conformal parameterization of a two-dimensional manifold in
$\mathbb{R}^d$, $d\ge 3$. For parameterized hypersurfaces in $\bR^3$,
earlier authors, see e.g.\ 
\cite{BrunnettC94,Linner04,LinnerR05,ArroyoGM06,curves3d}, used the
surrounding space in their numerical approximations, which
leads to errors in directions normal to the hypersurface. This will be
avoided by the intrinsic approach used in this paper. In particular,
our numerical method leads to approximate solutions which remain on
the hypersurface after application of the parameterization map.
In addition, in this paper we will present a first numerical analysis
for elastic flow in manifolds not embedded in $\mathbb{R}^3$. 
This in particular makes it possible to compute elastic flow of curves in the
hyperbolic plane in a stable way. 

For finite element approximations of
(\ref{eq:g_elastflow}) introduced in \cite{hypbol} it does not appear
possible to prove a stability result. It is the aim of this paper to
introduce novel approximations for (\ref{eq:g_elastflow}) that can be
shown to be stable. In particular, we will show that the semidiscrete
continuous-in-time approximations admit a gradient flow structure.
For relevant literature on conformal metrics we refer to
\cite{Schippers07,KrausR13}. Curvature driven flows in hyperbolic
spaces have been studied by \cite{Cabezas-RivasM07,AndrewsC17,
DallAcquaS17preprint,DallAcquaS18,hypbol}, and related numerical
approximations of elastic flow of curves can be found in 
\cite{DziukKS02,DeckelnickD09,pwf,Bartels13a} for the Euclidean case,
and in \cite{BrunnettC94,Linner04,LinnerR05,ArroyoGM06,curves3d}
for the case of curves on hypersurfaces in $\bR^3$.

The outline of this paper is as follows. After formulating the problem
in detail in the next section, we will derive in Section~\ref{sec:weak} 
weak formulations which will be the basis for our finite element 
approximation.
In Section~\ref{sec:sd} we introduce continuous-in-time, discrete-in-space
discretizations which are based on the weak formulations. For these
semidiscrete formulations a stability result will be shown, which is the
main contribution of this work. In Section~\ref{sec:fd} we then formulate fully
discrete variants for which we show existence and uniqueness. 
In Section~\ref{sec:nr} 
we present several numerical computations which show convergence
rates as well as the robustness of the approach. Finally, in the
appendix we show the consistency of the weak formulations presented in
Section~\ref{sec:weak}. 

\setcounter{equation}{0}
\section{Mathematical formulations} \label{sec:old1}

Let $I=\RZ$ be the periodic interval $[0,1]$.
Let $\vec x : I \to \bR^2$ 
be a parameterization of a closed curve $\Gamma \subset \bR^2$. 
On assuming that $|\vec x_\rho| > 0$ on $I$, 
we introduce the arclength $s$ of the curve, i.e.\ $\partial_s =
|\vec{x}_\rho|^{-1}\,\partial_\rho$, and set
\begin{equation} \label{eq:tau}
\vec\tau = \vec x_s 
\qquad \mbox{and}
\qquad \vec\nu = -\vec\tau^\perp\,,
\end{equation}
where $\cdot^\perp$ denotes a clockwise rotation by $\frac{\pi}{2}$.
For the curvature $\varkappa$ of \revised{$\Gamma$} it holds that
\begin{equation} \label{eq:varkappa}
\varkappa\,\vec\nu = \vec\varkappa = \vec\tau_s = \vec x_{ss} = 
\frac1{|\vec x_\rho|} \left[ \frac{\vec x_\rho}{|\vec x_\rho|} \right]_\rho .
\end{equation}

Let $H \subset \bR^2$ be an open set with metric tensor
\begin{equation} \label{eq:g}
[(\vec v, \vec w)_g](\vec z) = g(\vec z)\,\vec v\,.\,\vec w \quad
\forall\ \vec v, \vec w \in \bR^2
\qquad \text{ for } \vec z \in H\,,
\end{equation}
where $\vec v\,.\,\vec w = \vec v^T\,\vec w$ is the standard Euclidean inner
product, and where $g:H \to \bRplus$ is a smooth positive weight 
function. The length induced by (\ref{eq:g}) is defined as
\begin{equation} \label{eq:normg}
[|\vec v|_g](\vec z) = \left([(\vec v, \vec v)_g](\vec z) \right)^\frac12
= g^\frac12(\vec z)\,|\vec v| \quad \forall\ \vec v \in \bR^2
\qquad \text{ for } \vec z \in H\,.
\end{equation}

For $\lambda \in \bR$, we define the generalized elastic energy as
\begin{equation} \label{eq:Wglambda0}
W_{g,\lambda}(\vec x) = \tfrac12\,\int_I (\varkappa_g^2 + 2\,\lambda)\, 
|\vec x_\rho|_g \drho\,,
\end{equation}
where 
\begin{equation}\label{eq:varkappag}
  \varkappa_g = g^{-\frac12}(\vec{x})\left[\varkappa -\tfrac12\,
  \vec{\nu}\,.\, \nabla\, \ln g(\vec{x})\right]
\end{equation}
is the curvature of the curve with respect to the metric $g$,
see \cite{hypbol} for details.
Generalized elastic flow is defined as the $L^2$--gradient flow of 
(\ref{eq:Wglambda0}), and it was established in \cite{hypbol} that a strong
formulation is given by
\begin{equation} \label{eq:g_elastflowlambda}
\mathcal{V}_g = g^{\frac12}(\vec{x})\,\vec{x}_t\,.\,\vec{\nu} 
= - (\varkappa_g)_{s_gs_g} - \tfrac12\,\varkappa_g^3 
- S_0\,\varkappa_g + \lambda\,\varkappa_g\,,
\end{equation}
where $\partial_{s_g} = g^{-\frac12}(\vec x)\,\partial_s$ and
\begin{equation} \label{eq:Gaussg}
S_0 = - \frac{\Delta\,\ln\,g}{2\,g}
\end{equation}
is the sectional curvature of $g$. We refer to \cite{hypbol} for further
details. 

The two weak formulations of (\ref{eq:g_elastflowlambda}), for $\lambda=0$, 
introduced in \cite{hypbol} are based on the equivalent equation
\begin{equation} \label{eq:g_elastflow2}
g(\vec x)\,\vec x_t\,.\,\vec\nu = 
- \frac1{|\vec x_\rho|} 
\left(  \frac{[\varkappa_g]_\rho}{g^{\frac12}(\vec x)\,|\vec
x_\rho|}\right)_\rho 
- \tfrac12\,g^\frac12(\vec x)\,\varkappa_g^3 
- g^\frac12(\vec x)\,S_0(\vec x)\,\varkappa_g\,.
\end{equation}
The first uses $\varkappa$ as a variable, while the
second uses $\varkappa_g$ as a variable. 
\\ \noindent
$(\BGNwf)$:
Let $\vec x(0) \in [H^1(I)]^2$. For $t \in (0,T]$
find $\vec x(t) \in [H^1(I)]^2$ and $\varkappa(t)\in H^1(I)$ such that
\begin{subequations}
\begin{align}
& \int_I g(\vec x)\,\vec x_t\,.\,\vec\nu\,\chi\,|\vec x_\rho|\drho
= \int_I g^{-\frac12}(\vec x)\,
\left( g^{-\frac12}(\vec x)\left[\varkappa - 
\tfrac12\,\vec\nu\,.\,\nabla\,\ln g(\vec x)
 \right] \right)_\rho \chi_\rho\,|\vec x_\rho|^{-1} \drho 
\nonumber \\ & \qquad
- \tfrac12\, \int_I g^{-1}(\vec x) \left[
\varkappa -\tfrac12\,\vec\nu\,.\,\nabla\,\ln g(\vec x)
  \right]^3 \chi\,|\vec x_\rho| \drho 
\nonumber \\ & \qquad
- \int_I S_0(\vec x) 
\left[ \varkappa -\tfrac12\,\vec\nu\,.\,\nabla\,\ln g(\vec x)
\right] \chi\,|\vec x_\rho| \drho 
\quad \forall\ \chi \in H^1(I)\,, \label{eq:sdwfa} \\ &
\int_I \varkappa\,\vec\nu\,.\,\vec\eta\, |\vec x_\rho| \drho
+ \int_I (\vec x_\rho\,.\,\vec\eta_\rho)\,|\vec x_\rho|^{-1} \drho = 0
\quad \forall\ \vec\eta \in [H^1(I)]^2\,.  \label{eq:sdwfb} 
\end{align}
\end{subequations}
$(\BGNwfwf)$:
Let $\vec x(0) \in [H^1(I)]^2$. For $t \in (0,T]$
find $\vec x(t) \in [H^1(I)]^2$ and $\varkappa_g(t) \in H^1(I)$ such that
\begin{subequations}
\begin{align}
& \int_I g(\vec x)\,\vec x_t\,.\,\vec\nu\,
\chi\,|\vec x_\rho|\drho
= \int_I g^{-\frac12}(\vec x)\,[\varkappa_g]_\rho\,\chi_\rho\,
|\vec x_\rho|^{-1} \drho 
- \tfrac12\, \int_I g^\frac12(\vec x)\,
\varkappa_g^3 \,\chi\,|\vec x_\rho| \drho 
\nonumber \\ & \hspace{4cm}
- \int_I S_0(\vec x)\,g^{\frac12}(\vec x)\,\varkappa_g
\,\chi\,|\vec x_\rho| \drho \quad \forall\ \chi \in H^1(I)\,,
\label{eq:sdwfwfa} \\ &
 \int_I g(\vec x)\,\varkappa_g\,\vec\nu\,.\,
\vec\eta\,|\vec x_\rho|\drho
+ \int_I \left[\nabla\,g^\frac12(\vec x)\,.\,\vec\eta
+ g^\frac12(\vec x)\,\frac{\vec x_\rho\,.\,\vec\eta_\rho}{|\vec x_\rho|^2}
\right] |\vec x_\rho| \drho 
= 0 \quad \forall\ \vec\eta \in [H^1(I)]^2\,.
\label{eq:sdwfwfb}
\end{align}
\end{subequations}
For the numerical approximations based on $(\BGNwf)$ and $(\BGNwfwf)$ it does
not appear possible to prove stability results that show that 
discrete analogues of (\ref{eq:Wglambda0}), for $\lambda=0$, 
decrease monotonically in time.
\revised{%
The reason is that these formulations are directly based on the divergence form
in \eqref{eq:g_elastflow2}, which does not immediately
capture the variational structure of the gradient flow for 
\eqref{eq:Wglambda0}. For the same reason, the first stable finite element
approximations for Euclidean elastic flow were given in \cite{DeckelnickD09},
whereas the earlier schemes in \cite{DziukKS02} do not appear to admit a
stability proof.
In fact, based on the ideas in \cite{DeckelnickD09}, and utilizing the 
techniques in \cite{pwf}, it is possible to 
introduce alternative weak formulations of \eqref{eq:g_elastflowlambda},
for which semidiscrete continuous-in-time finite element
approximations admit such a stability result. 
Novel aspects compared to our previous work \cite{pwf} include the highly
nonlinear nature of the energy \eqref{eq:Wglambda0} and the curvature}
\revised{ 
definition \eqref{eq:varkappag}. Their variations give rise to new
nontrivial contributions, see e.g.\ \eqref{eq:33} below. Moreover, exploiting
the variational structure of the problem by treating $\varkappa_g$ as an
independent variable is, of course, new to this work compared to 
\cite{pwf}.}

We end this section with some example metrics that are of particular interest
in differential geometry. Two families of metrics are given by
\begin{subequations}
\begin{equation} \label{eq:gmu}
g(\vec z) = (\vec z \,.\,\vec\ek_2)^{-2\,\mu}\,,\ \mu \in \bR\,,
\quad \text{with} \quad
H = \bH^2 = \{ \vec z \in \bR^2 : \vec z \,.\,\vec\ek_2 > 0 \}\,,
\end{equation}
and
\begin{equation} \label{eq:galpha}
g(\vec z) = \frac4{(1 - \alpha\, |\vec z|^2)^2}\,,
\quad \text{with}\quad H = \begin{cases}
\bD_{\alpha} 
= \{ \vec z \in \bR^2 : |\vec z| < \alpha^{-\frac12} \}
& \alpha > 0\,, \\
\bR^2 & \alpha \leq 0\,.
\end{cases}
\end{equation} 
The case (\ref{eq:gmu}) with $\mu=1$ models the hyperbolic plane, 
while $\mu=0$ corresponds to the Euclidean case.
The case (\ref{eq:galpha}) with $\alpha=1$ gives a model for the
hyperbolic disk, while $\alpha=-1$ models the 
geometry of the elliptic plane. Of course, $\alpha=0$ collapses
to the Euclidean case.

Further metrics of interest are induced by conformal parameterizations
$\vec\Phi : H \to \bR^d$, $d\geq3$, of the two-dimensional Riemannian manifold 
$\mathcal{M} \subset \bR^d$, i.e.\ $\mathcal{M} = \vec\Phi(H)$ 
and $|\partial_{\vec\ek_1} \vec\Phi(\vec z)|^2 = |\partial_{\vec\ek_2} 
\vec\Phi(\vec z)|^2$ and $\partial_{\vec\ek_1} \vec\Phi(\vec z) \,.\, 
\partial_{\vec\ek_2} \vec\Phi(\vec z) = 0$ for all $\vec z \in H$.
Here examples include
\revised{
the stereographic projection of the unit sphere without the north pole, 
$\vec\Phi(\vec z) = (1 + |\vec z|^2)^{-1}\,
(2\,\vec z\,.\,\vec\ek_1, 2\,\vec z\,.\,\vec\ek_2, |\vec z|^2 - 1)^T$, so that
$g(\vec z) = 4\,(1 + |\vec z|^2)^{-2}$ and $H = \bR^2$,
which yields a geometric interpretation to (\ref{eq:galpha}) with $\alpha=-1$. 
Further examples are}
the Mercator projection of the unit sphere without
the north and the south pole, 
$\vec\Phi(\vec z) = \cosh^{-1}(\vec z\,.\,\vec\ek_1)\,
(\cos (\vec z\,.\,\vec\ek_2), \sin (\vec z\,.\,\vec\ek_2), 
\sinh (\vec z\,.\,\vec\ek_1))^T$, so that
\begin{equation} \label{eq:gMercator}
g(\vec z) = \cosh^{-2}(\vec z\,.\,\vec\ek_1) \,,\quad\text{with}\quad
H = \bR^2\,,
\end{equation}
as well as the catenoid parameterization
$\vec\Phi(\vec z) = 
(\cosh (\vec z\,.\,\vec\ek_1)\,\cos (\vec z\,.\,\vec\ek_2), 
\cosh (\vec z\,.\,\vec\ek_1)\,$ $\sin (\vec z\,.\,\vec\ek_2),
\vec z\,.\,\vec\ek_1)^T$, so that
\begin{equation} \label{eq:gcatenoid}
g(\vec z) = \cosh^2(\vec z\,.\,\vec\ek_1)\,, \quad\text{with}\quad
H = \bR^2\,.
\end{equation}
Based on \cite[p.\,593]{Sullivan11} we also recall the following conformal 
parameterization of a torus with large radius $R > 1$ and small radius $r=1$
from \cite{hypbol}. 
In particular, we let $\mathfrak s = [R^2 - 1]^\frac12$ and define
$\vec\Phi(\vec z) = 
\mathfrak s\,
([\mathfrak s^2 + 1]^\frac12- \cos (\vec z\,.\,\vec\ek_2))^{-1}\,
(\mathfrak s\, \cos \tfrac{\vec z\,.\,\vec\ek_1}{\mathfrak s},
\mathfrak s\, \sin \tfrac{\vec z\,.\,\vec\ek_1}{\mathfrak s},
\sin (\vec z\,.\,\vec\ek_2))^T$, so that
\begin{equation} \label{eq:gtorus}
g(\vec z) = \mathfrak s^2\,
([\mathfrak s^2 + 1]^\frac12 - \cos (\vec z\,.\,\vec\ek_2))^{-2} \,,
\quad\text{with}\quad H = \bR^2\,. 
\end{equation}
\end{subequations}

\setcounter{equation}{0}
\section{Weak formulations} \label{sec:weak}

We define the first variation of a quantity depending in a
differentiable way on $\vec x$, in the
direction $\vec\chi$ as
\begin{equation} \label{eq:A}
\left[\deldel{\vec x}\,A(\vec x)\right](\vec\chi)
= \lim_{\varepsilon \rightarrow 0}
\frac{A(\vec x + \varepsilon\, \vec \chi)-A(\vec x)}{\varepsilon}\,,
\end{equation}
and observe that, for $\vec x$ sufficiently smooth,
\begin{equation} \label{eq:ddA}
\left[\deldel{\vec x}\, A(\vec x) \right] (\vec x_t) 
= \ddt\, A(\vec x)\,.
\end{equation}
For later use, on noting (\ref{eq:A}), (\ref{eq:tau}) and (\ref{eq:normg}), we observe that 
\begin{subequations} \label{eq:33}
\begin{align}
\left[\deldel{\vec x}\, g^{\beta}(\vec x) \right](\vec\chi)
& = \beta\,g^{\beta-1}(\vec x)\,\vec\chi\,.\,\nabla\,g(\vec x) =
\beta\,g^{\beta}(\vec x)\,\vec\chi\,.\,\nabla\,\ln\,g(\vec x)
\quad \forall \ \beta \in {\mathbb R}\,,
\label{eq:ddgbeta} \\
\left[\deldel{\vec x}\, \nabla\,\ln\,g(\vec x)\right](\vec\chi) & =
(D^2\,\ln\,g(\vec x))\,\vec\chi\,, \label{eq:ddnablalng}\\
\left[\deldel{\vec x}\, |\vec x_\rho|\right](\vec\chi)
& = \frac{\vec x_\rho\,.\,\vec\chi_\rho}{|\vec x_\rho|}
= \vec\tau\,.\,\vec\chi_\rho = 
\vec\tau\,.\,\vec\chi_s\, |\vec x_\rho|\,, \label{eq:ddxrho} \\
\left[\deldel{\vec x}\, |\vec x_\rho|_g\right](\vec\chi)
& = \left( \vec\tau\,.\,\vec\chi_s + 
\tfrac12\,\vec\chi\,.\,\nabla\,\ln\,g(\vec x) \right) |\vec x_\rho|_g\,,
\label{eq:ddxrhog} \\
\left[\deldel{\vec x}\, \vec\tau\right](\vec\chi)
& = \left[\deldel{\vec x}\,\frac{\vec x_\rho}{|\vec x_\rho|}
\right](\vec\chi) = \frac{\vec\chi_\rho}{|\vec x_\rho|}
- \frac{\vec x_\rho}{|\vec x_\rho|^2}\,
 \frac{\vec x_\rho\,.\,\vec\chi_\rho}{|\vec x_\rho|}
= \vec\chi_s - \vec\tau\,(\vec\chi_s\,.\,\vec\tau) \nonumber \\ &
= (\vec\chi_s\,.\,\vec\nu)\,\vec\nu \,, \label{eq:ddtau} \\
\left[\deldel{\vec x}\, \vec\nu\right](\vec\chi)
& = - \left[\deldel{\vec x}\, \vec\tau^\perp\right](\vec\chi) = 
- (\vec\chi_s\,.\,\vec\nu)\,\vec\nu^\perp =
- (\vec\chi_s\,.\,\vec\nu)\,\vec\tau \,, \label{eq:ddnu} \\
\left[\deldel{\vec x}\,\vec \nu\,|\vec x_\rho|\right](\vec\chi) &=
- \left[\deldel{\vec x}\,\vec x_{\rho}^\perp\right](\vec\chi)
= - \vec \chi_\rho^\perp = - \vec \chi_s^\perp \,|\vec x_\rho|\,, 
\label{eq:ddnuxrho}
\end{align}
\end{subequations}
where we always assume that $\vec\chi$ is sufficiently smooth so that all the
quantities are defined almost everywhere. E.g.\ 
$\vec\chi \in [L^\infty(I)]^2$ for (\ref{eq:ddgbeta}), (\ref{eq:ddnablalng}),
and $\vec\chi \in [W^{1,\infty}(I)]^2$ for 
(\ref{eq:ddxrho})--(\ref{eq:ddnuxrho}).
In addition, on recalling (\ref{eq:tau}), we have for all 
$\vec a,\,\vec b \in \bR^2$ that
\begin{subequations}
\begin{align}
&\vec a \,.\,\vec b^\perp= -\vec a^\perp .\,\vec b\,, \label{eq:abperp}\\
&\vec a^\perp 
= (\vec a^\perp\,.\,\vec\tau)\,\vec\tau +(\vec a^\perp\,.\,\vec\nu)\,\vec\nu
= (\vec a^\perp\,.\,\vec\nu^\perp)\,\vec\tau  
  - (\vec a^\perp\,.\,\vec\tau^\perp)\,\vec\nu
= (\vec a\,.\,\vec\nu)\,\vec\tau  
  - (\vec a\,.\,\vec\tau)\,\vec\nu \,. \label{eq:aperp}
\end{align}
\end{subequations}

Let $(\cdot,\cdot)$ denote the $L^2$--inner product on $I$.
In the following we will discuss the $L^2$--gradient flow of the energy 
\begin{equation} \label{eq:Wglambda}
W_{g,\lambda}(\vec x) =  \left( \tfrac12\,\varkappa_g^2 + \lambda, |\vec
x_\rho|_g \right)
= \left(\tfrac12\,g^{-\frac12}(\vec x)
\left(\varkappa - \tfrac12\,\vec\nu\,.\,\nabla\,\ln g(\vec x) \right)^2 
+ \lambda\,g^\frac12(\vec x),|\vec x_\rho| \right) ,
\end{equation}
treating either $\varkappa$ or $\varkappa_g$ formally as an independent
variable that has to satisfy the side constraint (\ref{eq:sdwfb}), or
(\ref{eq:sdwfwfb}), respectively. 
\revised{%
The necessary techniques are obtained from the formal calculus of 
PDE constrained optimization, and were used by the authors for the first
time in \cite{pwf} in the present context.}
For the weak formulations of the $L^2$--gradient flow obtained in this
way, it can be shown that they are consistent with the strong formulation
(\ref{eq:g_elastflowlambda}), see the appendix.
Moreover, we will formally establish that solutions to
these weak formulations are indeed solutions to the $L^2$--gradient flow
of (\ref{eq:Wglambda}). 
Mimicking these stability proofs on the discrete level, 
\revised{which in essence
reduces to the seminal idea introduced in \cite[Rem.\ 2.1]{DeckelnickD09},} 
will yield the main \revised{theoretical} results of this paper.

\subsection{Based on $\varkappa$} \label{sec:31}

We define the Lagrangian
\begin{align}
\mathcal{L}(\vec x, \varkappa^\star, \vec y) & = 
\tfrac12\left( g^{-\frac12}(\vec x)
\left(\varkappa^\star - \tfrac12\,\vec\nu\,.\,\nabla\,\ln g(\vec x) \right)^2 
+ 2\,\lambda \,g^\frac12(\vec x), |\vec x_\rho| \right) 
\nonumber \\ & \quad
- \left( \varkappa^\star\,\vec\nu, \vec y\,|\vec x_\rho| \right) 
- \left( \vec x_s,\vec y_s\,|\vec x_\rho|\right) , \label{eq:Lag}
\end{align}
which is obtained on combining (\ref{eq:Wglambda}) and the side constraint 
\begin{equation} \label{eq:kappastar}
\left(\varkappa^\star\,\vec\nu,\vec\eta\, |\vec x_\rho| \right)
+ \left( \vec x_s,\vec\eta_s\,|\vec x_\rho| \right) = 0
\quad \forall\ \vec\eta \in [H^1(I)]^2\,,
\end{equation}
recall (\ref{eq:sdwfb}) and (\ref{eq:tau}). 
Taking variations $\vec\eta \in [H^1(I)]^2$ in $\vec y$, and setting
$\left[\deldel{\vec y}\, \mathcal{L}\right](\vec\eta) = 0$
we obtain (\ref{eq:kappastar}). Combining (\ref{eq:kappastar}) and
(\ref{eq:sdwfb}) yields, on recalling (\ref{eq:tau}), 
that $\varkappa^\star = \varkappa$,
and we are going to use this identity from now.
Taking variations $\chi \in L^2(I)$ in $\varkappa^\star$ and setting
$\left[\deldel{\varkappa^\star}\, \mathcal{L}\right](\chi) = 0$ leads to
\begin{equation} \label{eq:kappay}
\left(g^{-\frac12}(\vec x)
\left(\varkappa - \tfrac12\,\vec\nu\,.\,\nabla\,\ln g(\vec x) \right) 
- \vec y\,.\,\vec\nu, \chi\,|\vec x_\rho| \right) = 0
\qquad \forall\ \chi \in L^2(I)\,,
\end{equation}
which implies that
\begin{equation} \label{eq:kappaid}
\vec y\,.\,\vec\nu = g^{-\frac12}(\vec x)
\left(\varkappa - \tfrac12\,\vec\nu\,.\,\nabla\,\ln g(\vec x) \right) 
\quad\iff\quad
\varkappa = g^{\frac12}(\vec x)\,\vec y\,.\,\vec\nu + \tfrac12\,
\vec\nu\,.\,\nabla\,\ln g(\vec x)\, .
\end{equation}
Taking variations $\vec\chi \in [H^1(I)]^2$ in $\vec x$, and then setting
$(\mathcal V_g, g^\frac12(\vec x)\,\vec\chi\,.\,\vec\nu\,
|\vec x_\rho|_g ) = $ \linebreak $
(g^\frac32(\vec x)\,\vec x_t\,.\,\vec\nu, \vec\chi\,.\,\vec\nu\,
|\vec x_\rho| ) = - \left[\deldel{\vec x}\, \mathcal{L}\right](\vec\chi)$,
where we have noted (\ref{eq:g_elastflowlambda}) and (\ref{eq:normg}),
yields, on recalling (\ref{eq:tau}), that
\begin{align}
& \left(g^\frac32(\vec x)\,\vec x_t\,.\,\vec\nu, \vec\chi\,.\,\vec\nu\,
|\vec x_\rho| \right) = 
-\tfrac12\left(
\left(\varkappa - \tfrac12\,\vec\nu\,.\,\nabla\,\ln g(\vec x) \right)^2 , 
\left[\deldel{\vec x}\,g^{-\frac12}(\vec x)\,|\vec x_\rho|\right] (\vec\chi) 
\right) \nonumber \\ & \quad
+ \tfrac12 
\left( g^{-\frac12}(\vec x)
\left(\varkappa - \tfrac12\,\vec\nu\,.\,\nabla\,\ln g(\vec x) \right), 
\left[\deldel{\vec x}\, \vec \nu\,.\,\nabla\,\ln\,g(\vec x) \right] 
(\vec\chi) \,|\vec x_\rho| \right) \nonumber \\ & \quad
+ \left( \varkappa\,\vec y, 
\left[\deldel{\vec x}\,\vec\nu\,|\vec x_\rho| \right] (\vec\chi) \right)
+ \left( \vec y_\rho,  \left[\deldel{\vec x}\, \vec\tau \right] (\vec\chi) 
\right) 
- \lambda \left( 1, 
\left[\deldel{\vec x}\,g^{\frac12}(\vec x)\,|\vec x_\rho|\right] (\vec\chi) 
\right) \,, \label{eq:dLdx}
\end{align}
for all $\vec\chi\in [H^1(I)]^2$.
On choosing $\vec\chi = \vec x_t$ in (\ref{eq:dLdx}) we obtain, on noting
(\ref{eq:ddA}), that 
\begin{align}
& \left(g^\frac32(\vec x)\,(\vec x_t\,.\,\vec\nu)^2, |\vec x_\rho| \right) = 
-\tfrac12\left(
\left(\varkappa - \tfrac12\,\vec\nu\,.\,\nabla\,\ln g(\vec x) \right)^2 , 
\left[g^{-\frac12}(\vec x)\,|\vec x_\rho|\right]_t 
\right) \nonumber \\ & \quad
+ \tfrac12 
\left( g^{-\frac12}(\vec x)
\left(\varkappa - \tfrac12\,\vec\nu\,.\,\nabla\,\ln g(\vec x) \right), 
\left[\vec \nu\,.\,\nabla\,\ln\,g(\vec x) \right]_t
|\vec x_\rho| \right) \nonumber \\ & \quad
+ \left( \varkappa\,\vec y, 
\left[\vec\nu\,|\vec x_\rho| \right]_t \right)
+ \left( \vec y_\rho,  \vec\tau_t \right) 
- \lambda \left( 1, 
\left[g^{\frac12}(\vec x)\,|\vec x_\rho|\right]_t \right)
. \label{eq:dLdxxt}
\end{align}

Differentiating (\ref{eq:kappastar}) with respect to time, and then choosing
$\vec\eta = \vec y$ yields,
on recalling that $\varkappa^\star = \varkappa$, that
\begin{equation}
\left( \varkappa_t, \vec y\,.\,\vec\nu\,|\vec x_\rho| \right) 
+ \left( \varkappa\,\vec y, (\vec\nu\,|\vec x_\rho|)_t \right) 
+ \left(\vec\tau_t,\vec y_\rho \right) = 0 \,.
\label{eq:dtside}
\end{equation}
Combining (\ref{eq:dLdxxt}), (\ref{eq:dtside}) and (\ref{eq:kappaid}) gives,
on noting (\ref{eq:Wglambda}), that
\begin{align} \label{eq:dLstab}
&
\left(g^\frac32(\vec x)\,(\vec x_t\,.\,\vec\nu)^2, |\vec x_\rho| \right) = 
-\tfrac12\left(
\left(\varkappa - \tfrac12\,\vec\nu\,.\,\nabla\,\ln g(\vec x) \right)^2 , 
\left[g^{-\frac12}(\vec x)\,|\vec x_\rho|\right]_t 
\right) \nonumber \\ & \quad
+ \tfrac12 
\left( g^{-\frac12}(\vec x)
\left(\varkappa - \tfrac12\,\vec\nu\,.\,\nabla\,\ln g(\vec x) \right), 
\left[\vec \nu\,.\,\nabla\,\ln\,g(\vec x) \right]_t
|\vec x_\rho| \right) \nonumber \\ & \quad
- \left( \varkappa_t, 
g^{-\frac12}(\vec x)
\left(\varkappa - \tfrac12\,\vec\nu\,.\,\nabla\,\ln g(\vec x) \right) 
|\vec x_\rho| \right)
- \lambda \left( 1, 
\left[g^{\frac12}(\vec x)\,|\vec x_\rho|\right]_t \right)
\nonumber \\ & \quad
= - \ddt\,W_{g,\lambda}(\vec x)\,.
\end{align}
The above yields the gradient flow property of the new weak formulation,
on noting from (\ref{eq:g_elastflowlambda}) and
(\ref{eq:normg}) that the left hand side of 
(\ref{eq:dLstab}) can be equivalently written as $( \mathcal{V}_g^2, |\vec
x_\rho|_g)$.

In order to derive a suitable weak formulation, we now return to
(\ref{eq:dLdx}). 
Combining (\ref{eq:dLdx}), (\ref{eq:33}) 
and (\ref{eq:abperp}) yields that
\begin{align}
& \left(g^\frac32(\vec x)\,\vec x_t\,.\,\vec\nu, \vec\chi\,.\,\vec\nu\,
|\vec x_\rho| \right)
= -\tfrac12 \left( g^{-\frac12}(\vec x)
\left(\varkappa - \tfrac12\,\vec\nu\,.\,\nabla\,\ln g(\vec x) \right)^2
+ 2\,\lambda\,g^\frac12(\vec x), 
\vec\chi_s\,.\,\vec\tau\,|\vec x_\rho| \right) \nonumber \\ & \qquad 
+ \tfrac14 \left( g^{-\frac12}(\vec x)
 \left(\varkappa - \tfrac12\,\vec\nu\,.\,\nabla\,\ln g(\vec x) \right)^2
 - 2\,\lambda\,g^\frac12(\vec x),
\vec\chi\,.\,(\nabla\,\ln\,g(\vec x))\,|\vec x_\rho| \right)
\nonumber \\ & \qquad
+ \tfrac12 \left( 
g^{-\frac12}(\vec x)
\left(\varkappa - \tfrac12\,\vec\nu\,.\,\nabla\,\ln g(\vec x) \right) 
\,\vec \nu ,(D^2\,\ln\,g(\vec x))\,
\vec\chi\,|\vec x_\rho| \right) \nonumber \\ & \qquad
-\tfrac12\left( g^{-\frac12}(\vec x)
\left(\varkappa - \tfrac12\,\vec\nu\,.\,\nabla\,\ln g(\vec x) \right) 
[\ln\,g(\vec x)]_s, \vec\nu\,.\,\vec\chi_s
\,|\vec x_\rho| \right)
+ \left(\vec y_s\,.\,\vec\nu, \vec \chi_s\,.\,\vec\nu\,|\vec x_\rho| \right)
\nonumber \\ & \qquad
+ \left( \varkappa\, \vec y^\perp,
\vec\chi_s\,|\vec x_\rho| \right)
\qquad \forall  \ \vec \chi \in [H^1(I)]^2\,. 
\label{eq:dLdxflow2}
\end{align}

Overall we obtain the following weak formulation.
\\ \noindent
$(\BGNpwf)$:
Let $\vec x(0) \in [H^1(I)]^2$. For $t \in (0,T]$
find $\vec x(t),\,\vec y(t) \in [H^1(I)]^2$ and $\varkappa \in L^2(I)$ 
such that (\ref{eq:dLdxflow2}), (\ref{eq:kappay}) and 
\begin{equation} \label{eq:kappa}
\left(\varkappa\,\vec\nu,\vec\eta\, |\vec x_\rho| \right)
+ \left( \vec x_s,\vec\eta_s\,|\vec x_\rho| \right) = 0
\quad \forall\ \vec\eta \in [H^1(I)]^2
\end{equation}
hold.
We remark that in the Euclidean case (\ref{eq:kappay}) collapses to
$\varkappa = \vec y\,.\,\vec\nu$, and so on eliminating $\varkappa$
from (\ref{eq:dLdxflow2}) and (\ref{eq:kappa}), 
and on noting (\ref{eq:aperp}), we obtain that 
the formulation $(\BGNpwf)$ collapses
to \cite[(2.4a,b)]{pwf} for the Euclidean elastic flow.

\subsection{Based on $\varkappa_g$} \label{sec:32}

We recall that $(\BGNpwf)$ was inspired by the formulation
$(\BGNwf)$, which is based on $\varkappa$ acting as a variable. In order to
derive an alternative formulation, we now start from $(\BGNwfwf)$, where
the curvature $\varkappa_g$ is a variable.

We begin by equivalently rewriting the side constraint (\ref{eq:sdwfwfb}) as
\begin{equation} \label{eq:side}
\left(g^\frac12(\vec x)\,\varkappa_g\,\vec\nu, \vec\eta\,|\vec x_\rho|_g
\right) 
+ \left(\vec x_s,\vec\eta_s\,|\vec x_\rho|_g\right) 
+ \tfrac12 \left( \nabla\,\ln\,g(\vec x), \vec\eta\,|\vec x_\rho|_g\right) 
= 0
\quad \forall\ \vec\eta \in [H^1(I)]^2\,,
\end{equation}
where we have noted (\ref{eq:tau}), (\ref{eq:normg}) and
$\tfrac12\,\nabla\,\ln g(\vec x) = g^{-\frac12}(\vec x)\, \nabla\, g^{\frac12}
(\vec x)$.
Combining (\ref{eq:Wglambda0}) and (\ref{eq:side}) leads to the
Lagrangian
\begin{align}
\mathcal{L}_g(\vec x, \varkappa_g^\star, \vec y_g) & = 
\tfrac12\left((\varkappa_g^\star)^2 + 2\,\lambda, |\vec x_\rho|_g\right)
- \left( g^\frac12(\vec x)\,\varkappa_g^\star\,\vec\nu, 
\vec y_g\,|\vec x_\rho|_g
\right) 
- \left( \vec x_s,(\vec y_g)_s\,|\vec x_\rho|_g\right)
\nonumber \\ & \quad
- \tfrac12 \left( \nabla\,\ln\,g(\vec x), \vec y_g\,|\vec x_\rho|_g\right) .
\label{eq:Lagg}
\end{align}
Taking variations $\vec\eta \in [H^1(I)]^2$ in $\vec y_g$, and setting
$\left[\deldel{\vec y_g}\, \mathcal{L}_g\right](\vec\eta) = 0$
we obtain 
\begin{equation} \label{eq:kappagstar}
\left(g^\frac12(\vec x)\,\varkappa_g^\star\,\vec\nu, \vec\eta\,|\vec x_\rho|_g
\right) 
+ \left(\vec x_s,\vec\eta_s\,|\vec x_\rho|_g\right) 
+ \tfrac12 \left( \nabla\,\ln\,g(\vec x), \vec\eta\,|\vec x_\rho|_g\right) 
= 0
\quad \forall\ \vec\eta \in [H^1(I)]^2\,.
\end{equation}
Combining (\ref{eq:kappagstar}) and
(\ref{eq:side}) yields that $\varkappa_g^\star = \varkappa_g$,
and we are going to use this identity from now.
Taking variations $\chi \in L^2(I)$ in $\varkappa_g^\star$ and setting
$\left[\deldel{\varkappa_g^\star}\, \mathcal{L}_g\right](\chi) = 0$
yields that
\begin{equation} \label{eq:kappagy}
\left(\varkappa_g - g^\frac12(\vec x)\,\vec y_g\,.\,\vec\nu, 
\chi\,|\vec x_\rho|_g \right) = 0
\qquad \forall\ \chi \in L^2(I)\,,
\end{equation}
which implies that
\begin{equation} \label{eq:kappagid}
\varkappa_g = g^\frac12(\vec x)\,\vec y_g\,.\,\vec\nu\,.
\end{equation}
Taking variations $\vec\chi \in [H^1(I)]^2$ in $\vec x$,
and then setting 
$(\mathcal V_g, g^\frac12\,\vec\chi\,.\,\vec\nu\,
|\vec x_\rho|_g ) =$\linebreak 
$ (g(\vec x)\,\vec x_t\,.\,\vec\nu, \vec\chi\,.\,\vec\nu\,
|\vec x_\rho|_g ) = - \left[\deldel{\vec x}\, \mathcal{L}_g\right](\vec\chi)$,
where we have noted (\ref{eq:g_elastflowlambda}),
yields, on recalling (\ref{eq:tau}) and (\ref{eq:normg}), that
\begin{align}
& \left(g(\vec x)\,\vec x_t\,.\,\vec\nu, \vec\chi\,.\,\vec\nu\,
|\vec x_\rho|_g \right) 
= - \tfrac12 \left( \varkappa_g^2 + 2\,\lambda, 
\left[\deldel{\vec x}\, |\vec x_\rho|_g\right](\vec\chi) \right)
\nonumber \\ & \quad \quad
+\left( \varkappa_g\,\vec y_g, \left[\deldel{\vec x}\, g^\frac12(\vec x)\, 
\vec \nu\,|\vec x_\rho|_g \right] (\vec\chi) \right)
+ \left( (\vec y_g)_\rho,  \left[\deldel{\vec x}\, g^\frac12(\vec x)\,\vec\tau 
\right] (\vec\chi) \right)
\nonumber \\ & \quad \quad
+ \tfrac12 \left( \vec y_g, \left[\deldel{\vec x}\,(\nabla\,\ln\,g(\vec x))\,
|\vec x_\rho|_g \right] (\vec\chi) \right) 
\quad \forall\ \vec\chi \in [H^1(I)]^2\,.
\label{eq:dLgdxflow}
\end{align}
Choosing $\vec\chi = \vec x_t$ in (\ref{eq:dLgdxflow}), 
and noting (\ref{eq:ddA}), yields that
\begin{align}
& \left(g(\vec x)\,(\vec x_t\,.\,\vec\nu)^2, 
|\vec x_\rho|_g \right) 
= - \tfrac12 \left( (\varkappa_g)^2 + 2\,\lambda, 
(|\vec x_\rho|_g)_t \right)
+ \left( \varkappa_g\,\vec y_g, 
(g^\frac12(\vec x)\, \vec\nu \,|\vec x_\rho|_g)_t \right)
\nonumber \\ & \qquad
+ \left( (\vec y_g)_\rho,  (g^\frac12(\vec x)\,\vec\tau)_t 
\right)
+ \tfrac12 \left( \vec y_g, 
((\nabla\,\ln\,g(\vec x))\,|\vec x_\rho|_g )_t \right) .
\label{eq:b5xt}
\end{align}

On differentiating (\ref{eq:side}) with respect to time, and then choosing
$\vec\eta = \vec y_g$, we obtain, on recalling (\ref{eq:tau}) and
(\ref{eq:normg}), that
\begin{align} \label{eq:b3dt}
& 
\left((\varkappa_g)_t\,\vec y_g, g^\frac12(\vec x)\,\vec\nu\, 
|\vec x_\rho|_g\right)
+\left(\varkappa_g\,\vec y_g, (g^\frac12(\vec x)\,\vec\nu\, 
|\vec x_\rho|_g)_t \right)
\nonumber \\ & \qquad
+ \left((\vec y_g)_\rho,(g^\frac12(\vec x)\,\vec\tau)_t\right)
+ \tfrac12 \left(\vec y_g, ((\nabla\,\ln\,g(\vec x))\, 
|\vec x_\rho|_g)_t \right)
= 0 .
\end{align}
Choosing $\chi = (\varkappa_g)_t$ in (\ref{eq:kappagy}), and combining with
(\ref{eq:b5xt}) and (\ref{eq:b3dt}), yields, on recalling (\ref{eq:Wglambda}), 
that
\begin{equation} \label{eq:dLgstab}
\left(g(\vec x)\,(\vec x_t\,.\,\vec\nu)^2, 
|\vec x_\rho|_g \right) = - \ddt\,W_{g,\lambda}(\vec x) \,,
\end{equation}
which once again reveals the gradient flow structure,
on noting from (\ref{eq:g_elastflowlambda}) that the left hand side of 
(\ref{eq:dLgstab}) can be equivalently written as 
$( \mathcal{V}_g^2, |\vec x_\rho|_g)$.

In order to derive a suitable weak formulation, we now return to
(\ref{eq:dLgdxflow}). 
Substituting (\ref{eq:33}) 
into (\ref{eq:dLgdxflow})
yields, on noting (\ref{eq:normg}), that
\begin{align}
& \left(g(\vec x)\,\vec x_t\,.\,\vec\nu, \vec\chi\,.\,\vec\nu\,
|\vec x_\rho|_g \right) 
 \nonumber \\ & \
= -\tfrac12 \left( \varkappa_g^2 + 2\,\lambda
- \vec y_g\,.\,\nabla\,\ln\,g(\vec x), 
\left[\deldel{\vec x}\, |\vec x_\rho|_g\right](\vec\chi) \right)
\nonumber \\ & \quad 
+ \tfrac12 \left( \vec y_g, \left[\deldel{\vec x}\,(\nabla\,\ln\,g(\vec x))
\right] (\vec\chi)\,
|\vec x_\rho|_g  \right)
+ \left( \varkappa_g\,\vec y_g\,.\,\vec\nu, \left[\deldel{\vec x}\,
g(\vec x)\right](\vec\chi)\,|\vec x_\rho| \right)
\nonumber \\ & \quad
+ \left( g(\vec x)\,\varkappa_g\,\vec y_g, 
\left[\deldel{\vec x}\,\vec \nu\, |\vec x_\rho|\right](\vec\chi) \right)
+ \left( (\vec y_g)_\rho\,.\,\vec\tau ,  \left[\deldel{\vec x}\, 
g^\frac12(\vec x) \right] (\vec\chi) \right)
\nonumber \\ &\quad 
+ \left( g^\frac12(\vec x)\,(\vec y_g)_\rho,  \left[\deldel{\vec x}\, \vec\tau 
\right] (\vec\chi) \right)
\nonumber \\ &\
= -\tfrac12 \left( \varkappa_g^2 + 2\,\lambda
- \vec y_g\,.\,\nabla\,\ln\,g(\vec x), \left[\vec\tau\,.\,\vec\chi_s + 
\tfrac12\,\vec\chi\,.\,\nabla\,\ln\,g(\vec x)\right] |\vec x_\rho|_g
 \right) \nonumber \\ & \quad 
+ \tfrac12 \left((D^2\,\ln\,g(\vec x))\,\vec y_g, \vec\chi\,
|\vec x_\rho|_g \right) 
+ \left( g^\frac12(\vec x)\,\varkappa_g\,\vec y_g\,.\,\vec\nu
+ \tfrac12\,(\vec y_g)_s\,.\,\vec\tau, (\nabla\,\ln\,g(\vec x))\,.\,\vec\chi\,
|\vec x_\rho|_g \right) \nonumber \\ & \quad 
- \left( g^\frac12(\vec x)\,\varkappa_g\,\vec y_g,
\vec\chi_s^\perp\,|\vec x_\rho|_g \right)
+ \left( (\vec y_g)_s\,.\,\vec\nu, 
\vec\chi_s\,.\vec \nu\,|\vec x_\rho|_g  \right)
\qquad \forall\ \vec\chi \in [H^1(I)]^2\,.
\label{eq:dLgdxflow2}
\end{align}

Then, on recalling (\ref{eq:abperp}), we obtain the following weak formulation.
\\ \noindent
$(\BGNpwfwf)$:
Let $\vec x(0) \in [H^1(I)]^2$. For $t \in (0,T]$
find $\vec x(t),\,\vec y_g(t) \in [H^1(I)]^2$ and $\varkappa_g(t)\in L^2(I)$ 
such that
\begin{align} 
& \left(g(\vec x)\,\vec x_t\,.\,\vec\nu, \vec\chi\,.\,\vec\nu\,
|\vec x_\rho|_g \right) 
\nonumber \\ &\
= -\tfrac12 \left( \varkappa_g^2 + 2\,\lambda
- \vec y_g\,.\,\nabla\,\ln\,g(\vec x), \left[\vec\chi_s\,.\,\vec\tau + 
\tfrac12\,\vec\chi\,.\,\nabla\,\ln\,g(\vec x)\right] |\vec x_\rho|_g
 \right) \nonumber \\ & \quad 
+ \tfrac12 \left((D^2\,\ln\,g(\vec x))\,\vec y_g, \vec\chi\,
|\vec x_\rho|_g \right) 
+  \left( g^\frac12(\vec x)\,\varkappa_g\,\vec y_g\,.\,\vec\nu
+ \tfrac12\,(\vec y_g)_s\,.\,\vec\tau, \vec\chi\,.\,(\nabla\,\ln\,g(\vec x))\,
|\vec x_\rho|_g \right) \nonumber \\ & \quad 
+ \left( g^\frac12\,\varkappa_g,
\vec\chi_s\,.\,\vec y_g^\perp\,|\vec x_\rho|_g \right)
+ \left( (\vec y_g)_s\,.\,\vec\nu, 
\vec\chi_s\,.\vec \nu\,|\vec x_\rho|_g  \right)
\qquad \forall\ \vec\chi \in [H^1(I)]^2\,,\label{eq:BGNpwfwfa} 
\end{align}
(\ref{eq:kappagy}) and (\ref{eq:side}) hold.
We remark that in the Euclidean case (\ref{eq:kappagy}) collapses to
$\varkappa_g = \vec y_g\,.\,\vec\nu$, and so on eliminating $\varkappa_g$
from (\ref{eq:BGNpwfwfa}) and (\ref{eq:side}), 
and on noting (\ref{eq:aperp}), we obtain that 
the formulation $(\BGNpwfwf)$ collapses
to \cite[(2.4a,b)]{pwf} for the Euclidean elastic flow.

\setcounter{equation}{0}
\section{Semidiscrete finite element approximations} \label{sec:sd}

Let $[0,1]=\cup_{j=1}^J I_j$, $J\geq3$, be a
decomposition of $[0,1]$ into intervals given by the nodes $q_j$,
$I_j=[q_{j-1},q_j]$. 
For simplicity, and without loss of generality,
we assume that the subintervals form an equipartitioning of $[0,1]$,
i.e.\ that 
\begin{equation} \label{eq:Jequi}
q_j = j\,h\,,\quad \mbox{with}\quad h = J^{-1}\,,\qquad j=0,\ldots, J\,.
\end{equation}
Clearly, as $I=\RZ$ we identify $0=q_0 = q_J=1$.

The necessary finite element spaces are defined as follows:
\[
V^h = \{\chi \in C(I) : \chi\!\mid_{I_j} 
\mbox{ is linear}\ \forall\ j=1\to J\}
\quad\text{and}\quad \Vh = [V^h]^2\,.
\]
Let $\{\chi_j\}_{j=1}^J$ denote the standard basis of $V^h$,
and let $\pi^h:C(I)\to V^h$ 
be the standard interpolation operator at the nodes $\{q_j\}_{j=1}^J$.
We require also the local interpolation operator 
$\pi^h_j \equiv \pi^h \mid_{I_j}$, $j=1,\ldots,J$. 

We define the mass lumped $L^2$--inner product $(u,v)^h$,
for two piecewise continuous functions, with possible jumps at the 
nodes $\{q_j\}_{j=1}^J$, via
\begin{equation}
( u, v )^h = 
\sum_{j=1}^J \int_{I_j} \pi^h_j\,[u\,v] \,{\rm d}\rho
= \tfrac12\,\revised{h}\,\sum_{j=1}^J
\left[(u\,v)(q_j^-) + (u\,v)(q_{j-1}^+)\right],
\label{eq:ip0}
\end{equation}
where we define
$u(q_j^\pm)=\underset{\delta\searrow 0}{\lim}\ u(q_j\pm\delta)$.
The interpolation operators $\pi^h, \,\pi^h_j$ and the 
definition (\ref{eq:ip0}) naturally extend to vector valued functions.

Let $(\vec X^h(t))_{t\in[0,T]}$, with $\vec X^h(t)\in \Vh$, 
be an approximation to $(\vec x(t))_{t\in[0,T]}$. Then, 
similarly to (\ref{eq:tau}), we set
\begin{equation} \label{eq:tauh}
\vec\tau^h = \vec X^h_s = \frac{\vec X^h_\rho}{|\vec X^h_\rho|} 
\qquad \mbox{and} \qquad \vec\nu^h = -(\vec\tau^h)^\perp\,.
\end{equation}
For later use, we let $\vec\omega^h \in \Vh$ be the mass-lumped 
$L^2$--projection of $\vec\nu^h$ onto $\Vh$, i.e.\
\begin{equation} \label{eq:omegah}
\left(\vec\omega^h, \vec\varphi \, |\vec X^h_\rho| \right)^h 
= \left( \vec\nu^h, \vec\varphi \, |\vec X^h_\rho| \right)
= \left( \vec\nu^h, \vec\varphi \, |\vec X^h_\rho| \right)^h
\qquad \forall\ \vec\varphi\in\Vh\,.
\end{equation}

On noting (\ref{eq:A}), (\ref{eq:tauh}) and (\ref{eq:normg}), we have
the following discrete analogues of (\ref{eq:33}) for all $\vec \chi \in \Vh$
and 
for $j=1,\ldots,J$ 
\begin{subequations} \label{eq:45}
\begin{align}
\left[\deldel{\vec X^h}\, g^{\beta}(\vec X^h) \right](\vec\chi)
& = \beta\,
g^{\beta-1}(\vec X^h)\,\vec\chi\,.\,\nabla\,g(\vec X^h)
\nonumber \\ &=
\beta\,g^{\beta}(\vec X^h)\,\vec\chi\,.\,\nabla\,\ln\,g(\vec X^h) 
\quad \mbox{on }I_j,
\quad \forall \ \beta \in {\mathbb R} \,,
\label{eq:ddgbetah} \\
\left[\deldel{\vec X^h}\, \nabla\,\ln\,g(\vec X^h)
\right](\vec\chi) & =
(D^2\,\ln\,g(\vec X^h))\,\vec\chi
\quad \mbox{on }I_j\,, \label{eq:ddnablalngh} \\
\left[\deldel{\vec X^h}\, |\vec X^h_\rho|\right](\vec\chi)
& = \frac{\vec X^h_\rho\,.\,\vec\chi_\rho}{|\vec X^h_\rho|}
= \vec\tau^h\,.\,\vec\chi_\rho = 
\vec\tau^h\,.\,\vec\chi_s\, |\vec X^h_\rho|\quad \mbox{on }I_j\,, 
\label{eq:ddxrhoh} \\
\left[\deldel{\vec X^h}\, |\vec X^h_\rho|_g \right](\vec\chi)
& = \left( \vec\tau^h\,.\,\vec\chi_s + 
\tfrac12\,\vec\chi\,.\,\nabla\,\ln\,g(\vec X^h) \right) |\vec X^h_\rho|_g
\quad \mbox{on }I_j\,,
\label{eq:ddxrhogh} \\
\left[\deldel{\vec X^h}\, \vec\tau^h\right](\vec\chi)
& = \left[\deldel{\vec X^h}\,\frac{\vec X^h_\rho}{|\vec X^h_\rho|}
\right](\vec\chi) = \frac{\vec\chi_\rho}{|\vec X^h_\rho|}
- \frac{\vec X^h_\rho}{|\vec X^h_\rho|^2}\,
 \frac{\vec X^h_\rho\,.\,\vec\chi_\rho}{|\vec X^h_\rho|} \nonumber \\
&= \vec\chi_s - \vec\tau^h\,(\vec\chi_s\,.\,\vec\tau^h)
= (\vec\chi_s\,.\,\vec\nu^h)\,\vec\nu^h 
\quad \mbox{on }I_j\,, \label{eq:ddtauh} \\
\left[\deldel{\vec X^h}\, \vec\nu^h\right](\vec\chi)
& = - \left[\deldel{\vec X^h}\, (\vec \tau^h)^\perp\right](\vec\chi) 
= - (\vec\chi_s\,.\,\vec\nu^h)\,\vec\tau^h \quad \mbox{on }I_j\,, \label{eq:ddnuh} \\
\left[\deldel{\vec X^h}\,\vec \nu^h\,|\vec X^h_\rho|\right](\vec\chi) &=
- \left[\deldel{\vec X^h}\,(\vec X^h_{\rho})^\perp\right](\vec\chi)
= - \vec \chi_\rho^\perp = - \vec \chi_s^\perp \,|\vec X^h_\rho|
\quad \mbox{on }I_j
\,. \label{eq:ddnuxrhoh}
\end{align}
\end{subequations}

\subsection{Based on $\kappa^h$} \label{sec:41}

In the following we will discuss the $L^2$--gradient flow of the energy 
\begin{equation} \label{eq:Wglambdah}
W^h_{g,\lambda}(\vec X^h,\revised{\kappa^h}) 
= \tfrac12\left( g^{-\frac12}(\vec X^h)
\left(\kappa^h - \tfrac12\,\frac{\vec\omega^h}{|\vec\omega^h|}\,.\,
\nabla\,\ln g(\vec X^h) \right)^2 
+ 2\,\lambda\,g^\frac12(\vec X^h),|\vec X^h_\rho| \right)^h\,,
\end{equation}
subject to the side constraint 
\begin{equation} \label{eq:sideh}
\left( \kappa^h\,\vec\nu^h, \vec\eta\, |\vec X^h_\rho| \right)^h
+ \left( \vec X^h_s, \vec\eta_s \,|\vec X^h_\rho| \right) = 0
\quad \forall\ \vec\eta \in \Vh\,.
\end{equation}
On recalling (\ref{eq:omegah}), we see that (\ref{eq:Wglambdah}) 
and (\ref{eq:sideh})
are discrete analogues of (\ref{eq:Wglambda}) and (\ref{eq:sdwfb}), 
respectively. 
We define the Lagrangian
\begin{align}
\mathcal{L}^h(\vec X^h, \kappa^h, \vec Y^h) & = 
\tfrac12\left( g^{-\frac12}(\vec X^h)
\left(\kappa^h - \tfrac12\,\frac{\vec\omega^h}{|\vec\omega^h|}\,.\,
\nabla\,\ln g(\vec X^h) \right)^2 
+ 2\,\lambda \,g^\frac12(\vec X^h), |\vec X^h_\rho| \right)^h 
\nonumber \\ & \quad
- \left( \kappa^h\,\vec\nu^h, \vec Y^h\,|\vec X^h_\rho| \right)^h 
- \left( \vec X^h_s,\vec Y^h_s\,|\vec X^h_\rho|\right) , \label{eq:Lagh}
\end{align}
which is the corresponding discrete analogue of (\ref{eq:Lag}).

In addition to (\ref{eq:45}), 
we will require $\left[\deldel{\vec X^h}\,\pi^h\left[\frac{\vec \omega^h}
{|\vec \omega^h|}\right] \right](\vec\chi)$ in order to compute variations of
(\ref{eq:Lagh}). 
We establish this along the lines of \cite[(3.2a,b)--(3.7)]{pwf}.
To this end, we introduce the following operators.
On recalling (\ref{eq:ip0}) and (\ref{eq:tauh}),
let $\Ds, \,\mDs : V^h \rightarrow V^h$ be such that
for any $t \in [0,T]$
\begin{subequations} \label{eq:vDs}
\begin{align}
(\Ds \,\eta)(q_j) &= \frac{|\vec{X}^h(q_j,t)-\vec{X}^h(q_{j-1},t)|\,\eta_s(q_j^-) +  
|\vec{X}^h(q_{j+1},t)-\vec{X}^h(q_{j},t)|\,\eta_s(q_j^+)}
{|\vec{X}^h(q_j,t)-\vec{X}^h(q_{j-1},t)| + |\vec{X}^h(q_{j+1},t)-\vec{X}^h(q_{j},t)|}
\nonumber \\
& =\frac{\eta(q_{j+1}) - \eta(q_{j-1})}{|\vec{X}^h(q_j,t)-\vec{X}^h(q_{j-1},t)|
+ |\vec{X}^h(q_{j+1},t)-\vec{X}^h(q_{j},t)|}\,,
\qquad j=1, \ldots ,J\,,
\label{eq:vDse}
\\
(\mDs \,\eta)(q_j) &=\frac{(\Ds \,\eta)(q_j)}{|(\Ds \,\vec X^h(t))(q_j)|}
=\frac{\eta(q_{j+1}) - \eta(q_{j-1})}{|\vec X^h(q_{j+1},t) - \vec X^h(q_{j-1},t)|}
\,,
\qquad j=1,\ldots, J\,,
\label{eq:vDshe}
\end{align}
\end{subequations}
where $q_{J+1}=q_1$. 
Here, we make the following
natural assumption
\begin{align*}
(\Ass^h)&\hspace{0.5in} 
\vec{X}^h(q_j,t) \ne \vec{X}^h(q_{j+1},t) 
\qquad \mbox{and} \qquad \vec{X}^h(q_{j-1},t) \ne \vec{X}^h(q_{j+1},t),\\ 
&\hspace{7cm} j=1,\ldots, J,\qquad    
\mbox{for all } t\in [0,T]\,.
\end{align*}
Hence (\ref{eq:vDs}) is well-defined. 
As usual, $\Ds,\,\mDs : \Vh \rightarrow \Vh$ are defined component-wise.

It follows from
(\ref{eq:omegah}), (\ref{eq:tauh}) and (\ref{eq:vDse}) that,
for all $ \vec\varphi\in\Vh$,
\begin{equation} \label{eq:omegaDSh}
\left(\vec\omega^h, \vec\varphi \, |\vec X^h_\rho| \right)^h 
= -\left( (\vec \tau^h)^\perp, \vec\varphi \, |\vec X^h_\rho| \right)^h
=-\left( (\vec X^h_\rho)^\perp, \vec\varphi \right)^h 
= -\left( (\Ds\, \vec X^h)^\perp, \vec\varphi \, |\vec X^h_\rho| \right)^h.
\end{equation}
Therefore, we have from (\ref{eq:omegaDSh}),
$(\Ass^h)$ and (\ref{eq:vDshe}) that
\begin{equation} 
\vec \omega^h = -(\Ds\, \vec X^h)^\perp \qquad \mbox{and} \qquad
\pi^h\left[\frac{\vec \omega^h}{|\vec \omega^h|}\right] 
= -(\mDs\, \vec X^h)^\perp \,.
\label{eq:omegadh}
\end{equation} 
Then it is a simple matter to compute, for any $\vec\chi\in\Vh$, 
\begin{align*}
\left[\deldel{\vec X^h}\,\mDs\, \vec X^h \right] (\vec\chi)
&= \vec\pi^h \left[ \left[\mat \Id 
- (\mDs \,\vec X^h) \otimes (\mDs \,\vec X^h)\right]
\left( \mDs \,\vec \chi\right) \right] \nonumber \\ & 
= \vec\pi^h \left[ |\vec\omega^h|^{-2}\left(
(\mDs \,\vec \chi)\,.\,\vec\omega^h\right)\vec\omega^h \right],
\end{align*}
so that
\begin{equation}
\left[\deldel{\vec X^h}\,\vec\pi^h\frac{\vec \omega^h}{|\vec \omega^h|}
\right](\vec\chi)
= - \left(\left[\deldel{\vec X^h}\,\mDs\, \vec X^h \right]
(\vec\chi)\right)^\perp
= - \vec\pi^h \left[ |\vec\omega^h|^{-2}\left(
(\mDs \,\vec \chi)\,.\,\vec\omega^h\right) (\vec\omega^h)^\perp \right] .
\label{eq:mDsvarom}
\end{equation}

Similarly to (\ref{eq:omegaDSh}), we have for any $\vec\eta \in \Vh$ that
\begin{equation} \label{eq:NItang}
\left( \vec \eta_s, \vec \varphi \,|\vec X^h_\rho|\right)^h 
= \left( \Ds \,\vec \eta, \vec \varphi \,|\vec X^h_\rho|\right)^h \qquad \forall\ 
\vec\varphi \in
\Vh\,,
\end{equation}
Hence, it follows from (\ref{eq:NItang}), (\ref{eq:vDshe}) and (\ref{eq:omegadh}) that 
\begin{equation} \label{eq:NIttang}
\left( |\vec{\omega}^h|^{-1}\,\vec{\eta}_s, \vec{\varphi} \,|\vec X^h_\rho|\right)^h 
= \left( \mDs \,\vec \eta , \vec \varphi  \,|\vec X^h_\rho|\right)^h \qquad 
\forall\ \vec \eta,
\,\vec\varphi \in
\Vh\,.
\end{equation}
Therefore, combining (\ref{eq:mDsvarom})
and (\ref{eq:NIttang}) yields for any $\vec \varphi,\,\vec \chi \in \Vh$ that
\begin{align}
&\left(\vec \varphi, \left[\deldel{\vec X^h}\,\frac{\vec \omega^h}{|\vec \omega^h|}
 \right](\vec\chi)
\,|\vec X^h_\rho| \right)^h
= -\left( |\vec\omega^h|^{-2}\,\vec \varphi,
\left((\mDs \, \vec \chi)\,.\,\vec\omega^h\right) (\vec\omega^h)^\perp
|\vec X^h_\rho| \right)^h
\nonumber \\ &\
= -\left( |\vec\omega^h|^{-3}\left(\vec \varphi\,.\,(\vec\omega^h)^\perp
\right) \vec\omega^h, \vec \chi_s\,|\vec X^h_\rho| \right)^h
= -\left(|\vec \omega^h|^{-1}\, 
\vec \varphi\,.\,\left(\frac{{\vec \omega^h}}{|\vec \omega^h|}\right)^\perp,
\frac{\vec \omega^h}{|\vec \omega^h|}\,.\,\vec \chi_s\,|\vec X^h_\rho| 
\right)^h.
\label{eq:normomvar}
\end{align}

Taking variations $\chi \in V^h$ in $\kappa^h$ and setting
$\left[\deldel{\kappa^h}\, \mathcal{L}^h\right](\chi) = 0$ leads to
\begin{equation} \label{eq:kappayh}
\left(g^{-\frac12}(\vec X^h)
\left(\kappa^h - \tfrac12\,\frac{\vec\omega^h}{|\vec\omega^h|}\,.\,
\nabla\,\ln g(\vec X^h) \right)
- \vec Y^h\,.\,\vec\nu^h, \chi\,|\vec X^h_\rho| \right)^h = 0
\quad \forall\ \chi \in V^h\,,
\end{equation}
which, on recalling (\ref{eq:omegah}), 
implies the discrete analogue of (\ref{eq:kappaid})
\begin{align} \label{eq:kappaidh}
\pi^h \left[\vec Y^h\,.\,\vec\omega^h \right]& = \pi^h \left[ g^{-\frac12}(\vec X^h)
\left(\kappa^h - \tfrac12\,\frac{\vec\omega^h}{|\vec\omega^h|}\,.\,
\nabla\,\ln g(\vec X^h) \right) 
\right]
\nonumber \\ 
\quad\iff\quad
\kappa^h & = \pi^h \left[ 
g^{\frac12}(\vec X^h)\,\vec Y^h\,.\,\vec\omega^h + \tfrac12\,
\frac{\vec\omega^h}{|\vec\omega^h|}\,.\,\nabla\,\ln g(\vec X^h) \right] .
\end{align}
Taking variations $\vec\eta \in \Vh$ in $\vec Y^h$, and setting
$\left[\deldel{\vec Y^h}\, \mathcal{L}^h\right](\vec\eta) = 0$
we obtain (\ref{eq:sideh}).
Setting $\left(g^\frac32(\vec X^h)\,\vec X^h_t\,.\,\vec\omega^h, 
\vec\chi\,.\,\vec\omega^h\,|\vec X^h_\rho| \right)^h 
= - \left[\deldel{\vec X^h}\, \mathcal{L}^h\right](\vec\chi)$,
for variations $\vec\chi \in \Vh$ in $\vec X^h$ yields, as a discrete analogue
to (\ref{eq:dLdx}),
\begin{align}
&\left(g^\frac32(\vec X^h)\,\vec X^h_t\,.\,\vec\omega^h, 
\vec\chi\,.\,\vec\omega^h\,
|\vec X^h_\rho| \right)^h \nonumber \\ & \
= -\tfrac12\left(
\left(\kappa^h - \tfrac12\,\frac{\vec\omega^h}{|\vec\omega^h|}\,.\,
\nabla\,\ln g(\vec X^h) \right)^2 , 
\left[\deldel{\vec X^h}\,g^{-\frac12}(\vec X^h)\,|\vec X^h_\rho|\right] (\vec\chi) 
\right)^h \nonumber \\ & \quad
- \lambda \left( 1, 
\left[\deldel{\vec X^h}\,g^{\frac12}(\vec X^h)\,|\vec X^h_\rho|\right] (\vec\chi) 
\right)^h \nonumber \\ & \quad  
+ \tfrac12 
\left( g^{-\frac12}(\vec X^h)
\left(\kappa^h - \tfrac12\,\frac{\vec\omega^h}{|\vec\omega^h|}\,.\,
\nabla\,\ln g(\vec X^h) \right), 
\left[\deldel{\vec X^h}\, \frac{\vec\omega^h}{|\vec\omega^h|}\,.\,
\nabla\,\ln\,g(\vec X^h) \right] 
(\vec\chi) \,|\vec X^h_\rho| \right)^h \nonumber \\ & \quad  
+ \left( \kappa^h\,\vec Y^h, 
\left[\deldel{\vec X^h}\,\vec \nu^h\,|\vec X^h_\rho| \right] (\vec\chi)
\right)^h
+ \left( \vec Y^h_\rho,  \left[\deldel{\vec X^h}\, \vec\tau^h \right] 
(\vec\chi) \right) .
\label{eq:dLdxh}
\end{align}
Choosing $\vec\chi = \vec X^h_t$ in (\ref{eq:dLdxh}),
where we observe a discrete variant of (\ref{eq:ddA}), yields that
\begin{align}
&\left(g^\frac32(\vec X^h)\,(\vec X^h_t\,.\,\vec\omega^h)^2, 
|\vec X^h_\rho| \right)^h \nonumber \\ & \
= -\tfrac12\left(
\left(\kappa^h - \tfrac12\,\frac{\vec\omega^h}{|\vec\omega^h|}\,.\,
\nabla\,\ln g(\vec X^h) \right)^2 , 
\left[g^{-\frac12}(\vec X^h)\,|\vec X^h_\rho|\right]_t \right)^h 
\nonumber \\ & \quad
- \lambda \left( 1, 
\left[g^{\frac12}(\vec X^h)\,|\vec X^h_\rho|\right]_t \right)^h 
\nonumber \\ & \quad  
+ \tfrac12 \left( g^{-\frac12}(\vec X^h)
\left(\kappa^h - \tfrac12\,\frac{\vec\omega^h}{|\vec\omega^h|}\,.\,
\nabla\,\ln g(\vec X^h) \right), 
\left[\frac{\vec\omega^h}{|\vec\omega^h|}\,.\,
\nabla\,\ln\,g(\vec X^h) \right]_t |\vec X^h_\rho| \right)^h 
\nonumber \\ & \quad  
+ \left( \kappa^h\,\vec Y^h, 
\left[\vec \nu^h\,|\vec X^h_\rho| \right]_t \right)^h
+ \left( \vec Y^h_\rho,  \vec\tau^h_t \right) .
\label{eq:dLdxxth}
\end{align}

Differentiating (\ref{eq:sideh}) with respect to time, and then choosing
$\vec\eta = \vec Y^h$ yields that
\begin{equation}
\left( \kappa^h_t, \vec Y^h\,.\,\vec\nu^h\,|\vec X^h_\rho| \right)^h
+ \left( \kappa^h\,\vec Y^h, (\vec\nu^h\,|\vec X^h_\rho|)_t \right)^h 
+ \left(\vec\tau^h_t,\vec Y^h_\rho \right) = 0 \,.
\label{eq:dtsideh}
\end{equation}
Combining (\ref{eq:dLdxxth}), (\ref{eq:dtsideh}) and (\ref{eq:kappayh}) 
with $\chi = \kappa^h_t$ gives, on noting (\ref{eq:Wglambdah}), that
\begin{align} \label{eq:dLstabh}
&
\left(g^\frac32(\vec X^h)\,(\vec X^h_t\,.\,\vec\omega)^2, |\vec X^h_\rho| 
\right)^h = 
-\tfrac12\left(
\left(\kappa^h - \tfrac12\,\frac{\vec\omega}{|\vec\omega|}
\,.\,\nabla\,\ln g(\vec X^h) \right)^2 , 
\left[g^{-\frac12}(\vec X^h)\,|\vec X^h_\rho|\right]_t 
\right)^h \nonumber \\ & \quad
+ \tfrac12 
\left( g^{-\frac12}(\vec X^h)
\left(\kappa^h - \tfrac12\,\frac{\vec\omega}{|\vec\omega|}
\,.\,\nabla\,\ln g(\vec X^h) \right), 
\left[\frac{\vec\omega}{|\vec\omega|}\,.\,\nabla\,\ln\,g(\vec X^h) \right]_t
|\vec X^h_\rho| \right)^h \nonumber \\ & \quad
- \left( \kappa^h_t, 
g^{-\frac12}(\vec X^h)
\left(\kappa^h - \tfrac12\,\frac{\vec\omega}{|\vec\omega|}
\,.\,\nabla\,\ln g(\vec X^h) \right) |\vec X^h_\rho| \right)^h
- \lambda \left( 1, 
\left[g^{\frac12}(\vec X^h)\,|\vec X^h_\rho|\right]_t \right)^h
\nonumber \\ & \quad
= - \ddt\,W_{g,\lambda}^h(\vec X^h,\vec\kappa^h)\,.
\end{align}

In order to derive a suitable approximation of $(\BGNpwf)$, we now return to
(\ref{eq:dLdxh}). 
Combining (\ref{eq:dLdxh}), (\ref{eq:45}) 
and (\ref{eq:normomvar}), on noting (\ref{eq:abperp}), yields
\begin{align}
& \left(g^\frac32(\vec X^h)\,\vec X^h_t\,.\,\vec\omega^h, 
\vec\chi\,.\,\vec\omega^h\, |\vec X^h_\rho| \right)^h
= 
\left(\vec Y^h_s\,.\,\vec\nu^h, \vec\chi_s\,.\,\vec\nu^h\,|\vec X^h_\rho| 
\right)
\nonumber \\ & \
-\tfrac12 \left( g^{-\frac12}(\vec X^h)
\left[\kappa^h - \tfrac12\,\frac{\vec\omega^h}{|\vec\omega^h|}\,.\,
\nabla\,\ln g(\vec X^h) \right]^2
+ 2\,\lambda\,g^\frac12(\vec X^h) ,
\vec\chi_s\,.\,\vec\tau^h\,|\vec X^h_\rho| \right)^h \nonumber \\ & \
+ \tfrac14 \left( 
g^{-\frac12}(\vec X^h)
\left[\kappa^h - \tfrac12\,\frac{\vec\omega^h}{|\vec\omega^h|}\,.\,
\nabla\,\ln g(\vec X^h) \right]^2
- 2\,\lambda\,g^\frac12(\vec X^h),
\vec\chi\,.\,(\nabla\,\ln\,g(\vec X^h))\,|\vec X^h_\rho| \right)^h
\nonumber \\ & \
+ \tfrac12 \left( 
g^{-\frac12}(\vec X^h)
\left[\kappa^h - \tfrac12\,\frac{\vec\omega^h}{|\vec\omega^h|}\,.\,
\nabla\,\ln g(\vec X^h) \right]
\frac{\vec\omega^h}{|\vec\omega^h|},
(D^2\,\ln\,g(\vec X^h))\,
\vec\chi\,|\vec X^h_\rho| \right)^h 
\nonumber \\ & \
-\tfrac12\left( 
g^{-\frac12}(\vec X^h)
\left[\kappa^h - \tfrac12\,\frac{\vec\omega^h}{|\vec\omega^h|}\,.\,
\nabla\,\ln g(\vec X^h) \right]
\frac{\nabla \ln\,g(\vec X^h)}{|\vec\omega^h|}
\,.\,\left(\frac{\vec\omega^h}{|\vec \omega^h|}\right)^\perp, 
\frac{\vec\omega^h}{|\vec \omega^h|}
\,.\,\vec\chi_s
\,|\vec X^h_\rho| \right)^h 
\nonumber \\ & \quad
+ \left( \kappa^h\, (\vec Y^h)^\perp,
\vec\chi_s\,|\vec X^h_\rho| \right)^h
\quad \forall \ \vec \chi \in \Vh\,,
\label{eq:dLdxflow2h}
\end{align}
which is the discrete analogue of (\ref{eq:dLdxflow2}), on noting that 
$\vec \nu^\perp =\vec \tau$.

Hence we obtain the following approximation of $(\BGNpwf)$.
\\ \noindent
$(\BGNpwf_h)^h$:
Let $\vec X^h(0) \in \Vh$. For $t \in (0,T]$
find $(\vec X^h(t),\kappa^h(t),\vec Y^h(t))\in \Vh \times V^h\times\Vh$ 
such that (\ref{eq:dLdxflow2h}), (\ref{eq:kappayh}) and (\ref{eq:sideh}) hold.

We note that in the Euclidean case it follows from (\ref{eq:kappaidh}) 
that $\kappa^h = \pi^h\,[\vec Y^h\,.\,\vec\omega^h]$, and so on eliminating
$\kappa^h$, and on noting (\ref{eq:omegah}), 
the approximation $(\BGNpwf_h)^h$ 
collapses to the isotropic closed curve version of (3.36a,b), with $\beta=0$,
in \cite{pwf}.

\begin{theorem} \label{thm:stab}
Let the assumption $(\Ass^h)$ be satisfied and 
let $(\vec X^h(t),\vec Y^h(t)) \in \Vh\times \Vh$, for $t\in (0,T]$,
be a solution to $(\BGNpwf_h)^h$.
Then the solution satisfies the stability bound \eqref{eq:dLstabh}. 
\end{theorem}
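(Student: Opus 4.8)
The plan is to show that the formal computation already carried out in \eqref{eq:dLdxh}--\eqref{eq:dLstabh}, which was presented while motivating the scheme, becomes a rigorous derivation once $(\vec X^h,\kappa^h,\vec Y^h)$ is a genuine solution of $(\BGNpwf_h)^h$. The entire argument rests on the observation, going back to \cite[Rem.\ 2.1]{DeckelnickD09}, that because the scheme is posed over the \emph{fixed} finite-dimensional space $\Vh$ (only the nodal positions evolve, not the mesh topology), the time derivatives $\vec X^h_t(t) \in \Vh$ and $\kappa^h_t(t) \in V^h$ are themselves admissible test functions. Under $(\Ass^h)$ all coefficients stay well-defined, so the semidiscrete differential-algebraic problem yields a solution that is differentiable in time, giving meaning to $\vec X^h_t$ and $\kappa^h_t$.

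First I would record that the first scheme equation \eqref{eq:dLdxflow2h} is, by construction, nothing but \eqref{eq:dLdxh} with the discrete variation identities \eqref{eq:45} and \eqref{eq:normomvar} substituted (using \eqref{eq:abperp}); hence the solution equivalently satisfies \eqref{eq:dLdxh} for every $\vec\chi \in \Vh$. I would then insert $\vec\chi = \vec X^h_t$ into \eqref{eq:dLdxh}. Each term on the right-hand side is the first variation of a quantity depending on $t$ solely through $\vec X^h$ --- namely $g^{\mp\frac12}(\vec X^h)\,|\vec X^h_\rho|$, $\vec\nu^h\,|\vec X^h_\rho|$, $\vec\tau^h$ and $\frac{\vec\omega^h}{|\vec\omega^h|}\,.\,\nabla\,\ln g(\vec X^h)$ --- so the discrete analogue of the chain rule \eqref{eq:ddA} converts each $\left[\deldel{\vec X^h}\,A\right](\vec X^h_t)$ into $A_t$, producing exactly \eqref{eq:dLdxxth}.

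Next I would eliminate the two terms in \eqref{eq:dLdxxth} that still contain $\vec Y^h$. Differentiating the side constraint \eqref{eq:sideh} in time and testing with $\vec\eta = \vec Y^h \in \Vh$ gives \eqref{eq:dtsideh}, which shows that the sum of these two terms equals $-\left(\kappa^h_t, \vec Y^h\,.\,\vec\nu^h\,|\vec X^h_\rho|\right)^h$. Testing \eqref{eq:kappayh} with $\chi = \kappa^h_t \in V^h$ then rewrites this as $-\left(\kappa^h_t, g^{-\frac12}(\vec X^h)\left(\kappa^h - \tfrac12\,\frac{\vec\omega^h}{|\vec\omega^h|}\,.\,\nabla\,\ln g(\vec X^h)\right)|\vec X^h_\rho|\right)^h$, which is precisely the $\vec Y^h$-free curvature term in \eqref{eq:dLstabh}; this establishes the first equality there. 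The concluding identity $= -\ddt\,W^h_{g,\lambda}(\vec X^h,\kappa^h)$ is then a direct application of the node-wise product rule to the definition \eqref{eq:Wglambdah}, matching the four surviving terms one-to-one.

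The main obstacle, and the only point requiring genuine care, is the discrete chain rule for the term $\frac{\vec\omega^h}{|\vec\omega^h|}\,.\,\nabla\,\ln g(\vec X^h)$: unlike the purely local quantities, $\pi^h[\vec\omega^h/|\vec\omega^h|]$ is a \emph{nonlocal} function of the nodal positions, and its variation is governed by the nontrivial identity \eqref{eq:normomvar} (equivalently \eqref{eq:mDsvarom}). Here the mass lumping is essential: it forces every product inside the inner products to be evaluated at the nodes $q_j$, so that each factor is genuinely a smooth function of the finite vector of nodal values of $\vec X^h$ and the elementary chain rule applies without the interpolation discrepancy a non-lumped formulation would incur. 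Verifying that setting $\vec\chi = \vec X^h_t$ in this single term indeed reproduces $\left[\frac{\vec\omega^h}{|\vec\omega^h|}\,.\,\nabla\,\ln g(\vec X^h)\right]_t$ is where the bespoke operators $\Ds,\,\mDs$ of \eqref{eq:vDs} and the projection property \eqref{eq:omegadh} do their work; once this is in place, as in \cite{pwf}, the stability bound \eqref{eq:dLstabh} follows immediately.
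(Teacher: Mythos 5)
Your proposal is correct and follows essentially the same route as the paper, whose proof simply points to \eqref{eq:dLdxxth}, \eqref{eq:dtsideh} and \eqref{eq:dLstabh}: testing \eqref{eq:dLdxh} with $\vec\chi = \vec X^h_t$, the time-differentiated side constraint \eqref{eq:sideh} with $\vec\eta = \vec Y^h$, and \eqref{eq:kappayh} with $\chi = \kappa^h_t$, which are precisely your three steps. Your added observations --- that $\vec X^h_t$ and $\kappa^h_t$ are admissible test functions in the fixed space $\Vh \times V^h$, and that mass lumping together with \eqref{eq:normomvar} justifies the discrete chain rule for the nonlocal term $\frac{\vec\omega^h}{|\vec\omega^h|}\,.\,\nabla\,\ln g(\vec X^h)$ --- are sound elaborations of the same argument rather than a different approach.
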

\begin{proof}
The proof is given in (\ref{eq:dLdxxth}), (\ref{eq:dtsideh}) and 
(\ref{eq:dLstabh}).
\end{proof}

\begin{remark}\label{rem:why}
We note why we choose $\frac{\vec \omega^h}{|\vec \omega^h|}$ in 
\eqref{eq:Wglambdah} as opposed to $\vec \nu^h$ or $\vec \omega^h$.
In the case of $\vec \nu^h$, \eqref{eq:kappaidh} 
and \eqref{eq:dLdxh} still hold 
with $\frac{\vec\omega^h}{|\vec\omega^h|}$ replaced by $\vec\omega^h$ and 
$\vec\nu^h$, respectively.
However, then the elimination of $\kappa^h$ from the modified 
\eqref{eq:dLdxh}
via the modified \eqref{eq:kappaidh} now leads to a far more complicated 
version of \eqref{eq:dLdxflow2h}. 
In the case of $\vec \omega^h$, one needs to compute
$\left[\deldel{\vec X^h}\,\vec\omega^h \right]$ as opposed to 
$\left[\deldel{\vec X^h}\, \frac{\vec\omega^h}{|\vec\omega^h|} \right]$.
However, on noting \eqref{eq:omegadh} and \eqref{eq:vDs}, 
it is easier to compute the latter. Hence, the choice of 
$\frac{\vec \omega^h}{|\vec \omega^h|}$ in \eqref{eq:Wglambdah}.
\end{remark}

\begin{remark}\label{rem:equid}
Due to \eqref{eq:sideh}, the approximation 
$(\BGNpwf_h)^h$ satisfies \revised{an} equidistribution property,
i.e.\ any two neighbouring elements are either parallel or of the same length,
at every $t>0$. For this property to hold, it is crucial to employ mass lumping
in \eqref{eq:sideh}.
We refer to {\rm \cite[Rem.\ 2.4]{triplej}} for more details.
\end{remark}

\subsection{Based on $\kappa^h_g$} \label{sec:42}

Let $(\cdot,\cdot)^\diamond$ denote a discrete $L^2$--inner product based on
some numerical quadrature rule. In particular, 
for two piecewise continuous functions, with possible jumps at the 
nodes $\{q_j\}_{j=1}^J$, we let 
$(u,v)^\diamond = I^\diamond(u\,v)$, where
\begin{equation} \label{eq:Idiamond}
I^\diamond(f) = \sum_{j=1}^J h_j\,\sum_{k = 1}^K w_k\,
f(\alpha_k\,q_{j-1} + (1-\alpha_k)\,q_j)\,,\quad
w_k > 0\,,\ \alpha_k \in [0,1]\,,\quad k = 1,\ldots,K\,,
\end{equation}
with $K\geq2$, $\sum_{k=1}^K w_k = 1$,
and with distinct $\alpha_k$, $k=1,\ldots,K$.
A special case is
$(\cdot,\cdot)^\diamond = (\cdot,\cdot)^h$, recall
(\ref{eq:ip0}), but we also allow for more accurate quadrature rules.

We define the Lagrangian
\begin{align}
\mathcal{L}^h_g(\vec X^h, \kappa^h_g, \vec Y^h_g) & = 
\tfrac12\left((\kappa^h_g)^2 + 2\,\lambda, |\vec X^h_\rho|_g\right)^\diamond
- \left( g^\frac12(\vec X^h)\,\kappa^h_g\,\vec\nu^h, \vec Y^h_g\,|\vec X^h_\rho|_g
\right)^\diamond
\nonumber \\ & \quad
- \left( \vec X^h_s,(\vec Y^h_g)_s\,|\vec X^h_\rho|_g\right)^\diamond
- \tfrac12 \left( \nabla\,\ln\,g(\vec X^h), \vec Y^h_g\,|\vec X^h_\rho|_g
\right)^\diamond ,
\label{eq:B1}
\end{align}
which is the corresponding discrete analogue of (\ref{eq:Lagg}).
Taking variations $\chi \in V^h$ in $\kappa^h_g$ and setting
$\left[\deldel{\kappa^h_g}\, \mathcal{L}^h_g\right](\chi) = 0$
yields that
\begin{equation} \label{eq:B2}
\left(\kappa^h_g - g^\frac12(\vec X^h)\,\vec Y^h_g\,.\,\vec\nu^h, 
\chi\,|\vec X^h_\rho|_g \right)^\diamond = 0
\qquad \forall\ \chi \in V^h\,.
\end{equation}
Taking variations $\vec\eta \in \Vh$ in $\vec Y^h_g$, and setting
$\left[\deldel{\vec Y^h_g}\, \mathcal{L}^h_g\right](\vec\eta) = 0$
we obtain 
\begin{equation} \label{eq:B3}
\left(g^\frac12(\vec X^h)\,\kappa_g^h\,\vec\nu^h, \vec\eta\,|\vec X^h_\rho|_g
\right)^\diamond
+ \left(\vec X^h_s,\vec\eta_s\,|\vec X^h_\rho|_g\right)^\diamond
+ \tfrac12 \left( \nabla\,\ln\,g(\vec X^h), \vec\eta\,|\vec
X^h_\rho|_g\right)^\diamond
= 0\,, 
\end{equation}
for all $\vec\eta \in \Vh$,
as a discrete analogue of (\ref{eq:side}).
Taking variations $\vec\chi \in \Vh$ in $\vec X^h$, 
and then setting
$(g(\vec X^h)\,\vec X^h_t\,.\,\vec\omega^h
, \vec\chi \,.\,\vec\omega^h\,|\vec X^h_\rho|_g )^\diamond
= - \left[\deldel{\vec X^h}\, \mathcal{L}^h_g\right](\vec\chi)$, we obtain
\begin{align}
& \left(g(\vec X^h)\,\vec X^h_t\,.\,\vec\omega^h, 
\vec\chi \,.\,\vec\omega^h \,|\vec X^h_\rho|_g \right)^\diamond
\nonumber \\
& \ = - \tfrac12 \left( (\kappa^h_g)^2 + 2\,\lambda, 
\left[\deldel{\vec X^h}\, |\vec X^h_\rho|_g\right](\vec\chi) \right)^\diamond
+ \left( \kappa^h_g\,\vec Y^h_g, 
\left[\deldel{\vec X^h}\, g(\vec X^h)\, 
\vec\nu^h \,|\vec X^h_\rho|
\right] (\vec\chi) \right)^\diamond
\nonumber \\ & \quad
+ \left( (\vec Y^h_g)_\rho,  \left[\deldel{\vec X^h}\, g^\frac12(\vec X^h)\,
\vec\tau^h \right] (\vec\chi) \right)^\diamond
+ \tfrac12 \left( \vec Y^h_g, 
\left[\deldel{\vec X^h}\,(\nabla\,\ln\,g(\vec X^h))\,
|\vec X^h_\rho|_g \right] (\vec\chi) \right)^\diamond,
\label{eq:B5}
\end{align}
for all $\vec\chi \in \Vh$.
Choosing $\vec\chi = \vec X^h_t$ in (\ref{eq:B5}), and noting 
a discrete variant of (\ref{eq:ddA}), as well as
(\ref{eq:normg}), yields that
\begin{align}
& \left(g(\vec X^h)\,(\vec X^h_t\,.\,\vec\omega^h)^2, 
|\vec X^h_\rho|_g \right)^\diamond \nonumber \\ & \quad 
= - \tfrac12 \left( (\kappa^h_g)^2 + 2\,\lambda, 
(|\vec X^h_\rho|_g)_t \right)^\diamond
+ \left( \kappa^h_g\,\vec Y^h_g, 
(g^\frac12(\vec X^h)\, \vec\nu^h \,|\vec X^h_\rho|_g)_t \right)^\diamond
\nonumber \\ & \qquad
+ \left( (\vec Y^h_g)_\rho,  (g^\frac12(\vec X^h)\,\vec\tau^h)_t 
\right)^\diamond
+ \tfrac12 \left( \vec Y^h_g, 
((\nabla\,\ln\,g(\vec X^h))\,|\vec X^h_\rho|_g )_t \right)^\diamond .
\label{eq:B5Xt}
\end{align}

On differentiating (\ref{eq:B3}) with respect to time, and then choosing
$\vec\eta = \vec Y^h_g$, we obtain, on recalling (\ref{eq:tauh}) and
(\ref{eq:normg}), that
\begin{align} \label{eq:B3dt}
& 
\left((\kappa_g^h)_t\,\vec Y^h_g, g^\frac12(\vec X^h)\,\vec\nu^h\, 
|\vec X^h_\rho|_g\right)^\diamond
+\left(\kappa_g^h\,\vec Y^h_g, (g^\frac12(\vec X^h)\,\vec\nu^h\, 
|\vec X^h_\rho|_g)_t \right)^\diamond
\nonumber \\ & \qquad
+ \left((\vec Y^h_g)_\rho,(g^\frac12(\vec X^h)\,\vec\tau^h)_t\right)^\diamond
+ \tfrac12 \left(\vec Y^h_g, ((\nabla\,\ln\,g(\vec X^h))\, 
|\vec X^h_\rho|_g)_t \right)^\diamond
= 0 .
\end{align}
Choosing $\chi = (\kappa^h_g)_t$ in (\ref{eq:B2}), and combining with
(\ref{eq:B5Xt}) and (\ref{eq:B3dt}), yields that
\begin{equation} \label{eq:Qhstab}
 \left(g(\vec X^h)\,(\vec X^h_t\,.\,\vec\omega^h)^2, 
|\vec X^h_\rho|_g \right)^\diamond 
+\tfrac12\,\ddt
\left((\kappa^h_g)^2 + 2\,\lambda, |\vec X^h_\rho|_g\right)^\diamond
= 0\,,
\end{equation}
which reveals the discrete gradient flow structure. Also note that
(\ref{eq:B5Xt})--(\ref{eq:Qhstab}) are the discrete analogues of
(\ref{eq:b5xt})--(\ref{eq:dLgstab}).

In order to derive a suitable finite element approximation, we now return to
(\ref{eq:B5}). 
Substituting (\ref{eq:45}) 
into (\ref{eq:B5}) yields, 
on noting (\ref{eq:tauh}) and (\ref{eq:normg}), that
\begin{align}
& \left(g(\vec X^h)\,\vec X^h_t\,.\,\vec\omega^h, 
\vec\chi \,.\,\vec\omega^h\,|\vec X^h_\rho|_g \right)^\diamond
= 
 \tfrac12 \left( \vec Y^h_g, 
\left[\deldel{\vec X^h}\,(\nabla\,\ln\,g(\vec X^h)) \right] (\vec\chi)\,
|\vec X^h_\rho|_g \right)^\diamond
\nonumber \\ & \
- \tfrac12 \left( (\kappa^h_g)^2 + 2\,\lambda
-\vec Y^h_g \,.\,\nabla\,\ln\,g(\vec X^h), 
\left[\deldel{\vec X^h}\, |\vec X^h_\rho|_g\right](\vec\chi) \right)^\diamond
\nonumber \\ & \
+ \left( \kappa^h_g\,\vec Y^h_g\,.\,\vec\nu^h
\left[\deldel{\vec X^h}\, g(\vec X^h)\right] (\vec\chi)
\,|\vec X^h_\rho| \right)^\diamond
+ \left( g(\vec X^h)\,\kappa^h_g\,\vec Y^h_g, \left[\deldel{\vec X^h}\,  
\vec\nu^h\,|\vec X^h_\rho|\right] (\vec\chi) \right)^\diamond
\nonumber \\ & \
+ \left( (\vec Y^h_g)_\rho\,.\,\vec\tau^h ,  
\left[\deldel{\vec X^h}\, g^\frac12(\vec X^h) \right] (\vec\chi)
\right)^\diamond
+ \left( g^\frac12(\vec X^h)\,(\vec Y^h_g)_\rho, 
\left[\deldel{\vec X^h}\, \vec\tau^h 
\right] (\vec\chi) \right)^\diamond \nonumber \\ &
 = -\tfrac12 \left( (\kappa^h_g\,)^2 + 2\,\lambda
- \vec Y^h_g\,.\,\nabla\,\ln\,g(\vec X^h), \left[\vec\tau^h\,.\,\vec\chi_s + 
\tfrac12\,\vec\chi\,.\,\nabla\,\ln\,g(\vec X^h)\right] |\vec X^h_\rho|_g
 \right)^\diamond \nonumber \\ & \
+ \tfrac12 \left((D^2\,\ln\,g(\vec X^h))\,\vec Y^h_g, \vec\chi\,
|\vec X^h_\rho|_g \right)^\diamond \nonumber \\ & \
+  \left( g^\frac12(\vec X^h)\,\kappa^h_g\,\vec Y^h_g\,.\,\vec\nu^h
+ \tfrac12\,(\vec Y^h_g)_s\,.\,\vec\tau^h, 
(\nabla\,\ln\,g(\vec X^h))\,.\,\vec\chi\,
|\vec X^h_\rho|_g \right)^\diamond \nonumber \\ & \
- \left( g^\frac12(\vec X^h)\,\kappa^h_g\,\vec Y^h_g,
\vec\chi_s^\perp\,|\vec X^h_\rho|_g \right)^\diamond
+ \left( (\vec Y^h_g)_s\,.\,\vec\nu^h, 
\vec\chi_s\,.\,\vec\nu^h\,|\vec X^h_\rho|_g  \right)^\diamond
\qquad \forall\ \vec\chi \in \Vh\,.
\label{eq:B6}
\end{align}
Then (\ref{eq:B6}), (\ref{eq:B2}) and (\ref{eq:B3}), on recalling
(\ref{eq:abperp}), give rise to the following approximation of 
$(\BGNpwfwf)$.

$(\BGNpwfwf_h)^\diamond$:
Let $\vec X^h(0) \in \Vh$. For $t \in (0,T]$
find $(\vec X^h(t),\kappa^h_g(t),\vec Y_g^h(t)) \in \Vh\times V^h \times \Vh$ 
such that
\begin{align} 
& \left(g(\vec X^h)\,\vec X^h_t\,.\,\vec\omega^h, 
\vec\chi \,.\,\vec\omega^h\,|\vec X^h_\rho|_g \right)^\diamond
- \left( (\vec Y^h_g)_s\,.\,\vec\nu^h, 
\vec\chi_s\,.\,\vec\nu^h\,|\vec X^h_\rho|_g  \right)^\diamond
 \nonumber \\ 
& \quad
 = -\tfrac12 \left( (\kappa^h_g\,)^2 + 2\,\lambda
- \vec Y^h_g\,.\,\nabla\,\ln\,g(\vec X^h), \left[\vec\chi_s\,.\,\vec\tau^h + 
\tfrac12\,\vec\chi\,.\,\nabla\,\ln\,g(\vec X^h)\right] |\vec X^h_\rho|_g
 \right)^\diamond \nonumber \\ & \qquad
+ \tfrac12 \left((D^2\,\ln\,g(\vec X^h))\,\vec Y^h_g, \vec\chi\,
|\vec X^h_\rho|_g \right)^\diamond 
+ \left( g^\frac12(\vec X^h)\,\kappa^h_g,
\vec\chi_s\,.\,(\vec Y^h_g)^\perp\,|\vec X^h_\rho|_g \right)^\diamond
\nonumber \\ & \qquad
+ \left( g^\frac12(\vec X^h)\,\kappa^h_g\,\vec Y^h_g\,.\,\vec\nu^h
+ \tfrac12\,(\vec Y^h_g)_s\,.\,\vec\tau^h, \vec\chi\,.\,
(\nabla\,\ln\,g(\vec X^h))\,
|\vec X^h_\rho|_g \right)^\diamond 
\quad \forall\ \vec\chi \in \Vh\,, \label{eq:B7a} 
\end{align}
(\ref{eq:B2}) and (\ref{eq:B3}) hold. 

\begin{theorem} \label{thm:stabg}
Let $|\vec X^h_\rho| > 0$ almost everywhere in $I \times (0,T)$.
Let $(\vec X^h(t),\kappa^h_g(t),$ $\vec Y_g^h(t)) 
\in \Vh\times V^h \times \Vh$, for $t\in (0,T]$,
be a solution to $(\BGNpwfwf_h)^\diamond$. Then the solution satisfies the
stability bound {\rm (\ref{eq:Qhstab})}. 
\end{theorem}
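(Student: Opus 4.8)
The plan is to establish \eqref{eq:Qhstab} by reproducing, at the semidiscrete level, the formal gradient-flow computation \eqref{eq:b5xt}--\eqref{eq:dLgstab} carried out for the continuous formulation $(\BGNpwfwf)$; this parallels the proof of Theorem~\ref{thm:stab}. The starting point is the observation that a solution to $(\BGNpwfwf_h)^\diamond$ satisfies not only \eqref{eq:B7a} but, equivalently, \eqref{eq:B5}. Indeed, \eqref{eq:B7a} was obtained from \eqref{eq:B5} purely by inserting the variation formulas \eqref{eq:45} and applying \eqref{eq:abperp}, so the two identities hold for exactly the same test functions $\vec\chi\in\Vh$. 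It therefore suffices to work with \eqref{eq:B5}, \eqref{eq:B3} and \eqref{eq:B2}, the three relations whose combination was shown above to yield \eqref{eq:Qhstab}.

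First I would test \eqref{eq:B5} with the admissible choice $\vec\chi=\vec X^h_t\in\Vh$, which is legitimate since $\vec X^h(t)\in\Vh$ evolves differentiably in $t$. Because every integrand entering the quadrature product $(\cdot,\cdot)^\diamond$ is evaluated at the fixed sample points of \eqref{eq:Idiamond} and depends on $t$ only through the nodal values of $\vec X^h(t)$, the discrete analogue of \eqref{eq:ddA} applies termwise and converts each first variation $[\deldel{\vec X^h}(\cdot)](\vec X^h_t)$ into the total time derivative $(\cdot)_t$; this produces \eqref{eq:B5Xt}, where I also use $|\vec X^h_\rho|_g = g^\frac12(\vec X^h)\,|\vec X^h_\rho|$ from \eqref{eq:normg} to match the powers of $g$. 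Next I would differentiate the constraint \eqref{eq:B3} in time and test with $\vec\eta=\vec Y^h_g$, which gives \eqref{eq:B3dt}. Finally I would test \eqref{eq:B2} with $\chi=(\kappa^h_g)_t$, allowing $g^\frac12(\vec X^h)\,\vec Y^h_g\,.\,\vec\nu^h$ to be replaced by $\kappa^h_g$ inside the relevant product.

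The combination then proceeds by inspection of the individual terms. The three transport-type contributions on the right-hand side of \eqref{eq:B5Xt}, namely those involving $(g^\frac12(\vec X^h)\,\vec\nu^h\,|\vec X^h_\rho|_g)_t$, $(g^\frac12(\vec X^h)\,\vec\tau^h)_t$ and $((\nabla\,\ln\,g(\vec X^h))\,|\vec X^h_\rho|_g)_t$, coincide exactly with the last three terms of \eqref{eq:B3dt}; since \eqref{eq:B3dt} asserts that these three terms sum to the negative of $((\kappa^h_g)_t\,\vec Y^h_g, g^\frac12(\vec X^h)\,\vec\nu^h\,|\vec X^h_\rho|_g)^\diamond$, I may eliminate them from \eqref{eq:B5Xt}. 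This reduces the left-hand side $(g(\vec X^h)\,(\vec X^h_t\,.\,\vec\omega^h)^2,|\vec X^h_\rho|_g)^\diamond$ to $-\tfrac12((\kappa^h_g)^2+2\,\lambda, (|\vec X^h_\rho|_g)_t)^\diamond - ((\kappa^h_g)_t\,\vec Y^h_g, g^\frac12(\vec X^h)\,\vec\nu^h\,|\vec X^h_\rho|_g)^\diamond$. The rewriting from \eqref{eq:B2} turns the last term into $((\kappa^h_g)_t\,\kappa^h_g,|\vec X^h_\rho|_g)^\diamond$, and the product rule (with $\lambda$ constant) identifies the resulting right-hand side with $-\tfrac12\,\ddt((\kappa^h_g)^2+2\,\lambda,|\vec X^h_\rho|_g)^\diamond$. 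Rearranging gives precisely \eqref{eq:Qhstab}.

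The one step requiring genuine care is the justification of the discrete analogue of \eqref{eq:ddA} for the quadrature product $(\cdot,\cdot)^\diamond$ and for the mass-lumped vector $\vec\omega^h$. This is exactly where the hypothesis $|\vec X^h_\rho|>0$ almost everywhere is used: it guarantees that $\vec\tau^h,\vec\nu^h$ are well-defined and that the divided difference $\Ds$ appearing in \eqref{eq:omegadh} has strictly positive denominators, so that $\vec\omega^h=-(\Ds\,\vec X^h)^\perp$ and all quantities in \eqref{eq:B5} are differentiable in $t$ along the flow, whence the chain rule legitimises the termwise passage from first variations to time derivatives. I would also note that, in contrast to the $\kappa^h$-based scheme of Theorem~\ref{thm:stab}, this argument does not require the full assumption $(\Ass^h)$, because the Lagrangian \eqref{eq:B1} is built from $\vec\nu^h$ directly rather than from $\pi^h[\vec\omega^h/|\vec\omega^h|]$, so the operator $\mDs$ --- where the second condition in $(\Ass^h)$ enters --- never appears.
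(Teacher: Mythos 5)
Your proposal is correct and takes essentially the same route as the paper: the paper's own proof simply observes that a solution of $(\BGNpwfwf_h)^\diamond$ satisfies \eqref{eq:B5Xt} and \eqref{eq:B3dt} (via the equivalence of \eqref{eq:B7a} with \eqref{eq:B5}) and then combines these with \eqref{eq:B2} tested with $\chi=(\kappa^h_g)_t$, which is exactly your computation. Your extra remarks --- that \eqref{eq:B7a} and \eqref{eq:B5} are the same identity after inserting \eqref{eq:45} and \eqref{eq:abperp}, and that the full assumption $(\Ass^h)$ is not needed here since $\mDs$ never enters --- are accurate and merely make explicit what the paper leaves implicit.
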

\begin{proof}
We have already shown that a solution to $(\BGNpwfwf_h)^\diamond$ satisfies
(\ref{eq:B5Xt}) and (\ref{eq:B3dt}). Hence choosing $\chi = (\kappa_g^h)_t$
in (\ref{eq:B2}), and combining with (\ref{eq:B5Xt}) and (\ref{eq:B3dt}),
yields (\ref{eq:Qhstab}) as before.
\end{proof}

\begin{remark} \label{rem:noequid}
We stress that unlike for $(\BGNpwf_h)^h$, recall {\rm Remark~\ref{rem:equid}},
it is not possible to prove an equidistribution property for
$(\BGNpwfwf_h)^\diamond$, even if we employ mass lumping in 
\eqref{eq:B3}.
It is for this reason that we also consider higher order quadrature rules.
The motivation behind considering $(\BGNpwfwf_h)^\diamond$ as an alternative
to $(\BGNpwf_h)^h$ is twofold. Firstly, from a variational point of view, it
is more natural to work with $\varkappa_g$ as a variable, since
\eqref{eq:Wglambda0} is naturally defined in terms of $\varkappa_g$. 
Secondly, the techniques introduced for $(\BGNpwfwf_h)^\diamond$
will be exploited in \cite{axipwf} for stable approximations of 
Willmore flow for axisymmetric hypersurfaces in $\bR^3$.
\end{remark}

\setcounter{equation}{0}
\section{Fully discrete finite element approximations} \label{sec:fd}

Let $0= t_0 < t_1 < \ldots < t_{M-1} < t_M = T$ be a
partitioning of $[0,T]$ into possibly variable time steps 
$\ttau_m = t_{m+1} - t_{m}$, $m=0\to M-1$. 
We set $\ttau = \max_{m=0\to M-1}\ttau_m$.
For a given $\vec{X}^m\in \Vh$ we set
$\vec\nu^m = - \frac{[\vec X^m_\rho]^\perp}{|\vec X^m_\rho|}$, as the discrete
analogue to (\ref{eq:tau}). We also let
$\vec\omega^m \in \Vh$ be the natural fully discrete analogue of
$\vec\omega^h \in \Vh$, recall (\ref{eq:omegah}).
Given $\vec{X}^m\in \Vh$, the fully discrete
approximations we propose in this section will always seek a 
parameterization $\vec{X}^{m+1}\in \Vh$ at the new time level, together with a
suitable approximation of curvature. 

For the metrics we consider in this paper, we summarize in 
Table~\ref{tab:g} the quantities that are necessary in order to implement the
numerical schemes presented below.
\begin{table}
\center
\def\arraystretch{1.5}%
\caption{Expressions for terms that are relevant for the implementation of the
presented finite element approximations.}
\begin{tabular}{|c|c|c|}
\hline
$g$ & 
$\nabla\,\ln g(\vec x)$ & 
$D^2\,\ln g(\vec x)$ \\
\hline 
(\ref{eq:gmu}) & 
$- 2\,\mu\,(\vec x\,.\,\vec\ek_2)^{-1}\,\vec\ek_2$ &
$2\,\mu\,(\vec x\,.\,\vec\ek_2)^{-2}\,\vec\ek_2 \otimes \vec \ek_2$
 \\ 
(\ref{eq:galpha}) & 
$\frac{4\,\alpha}{1 - \alpha\,|\vec x|^2}\,\vec x$ & 
$\frac{4\,\alpha}{1 - \alpha\,|\vec x|^2}\,\mat \Id
+ \frac{8\,\alpha^2}{(1 - \alpha\,|\vec x|^2)^2}\,\vec x \otimes \vec x$
 \\ 
(\ref{eq:gMercator}) & $-2\,\tanh(\vec x\,.\,\vec\ek_1)\,\vec \ek_1$
& $-2\,\cosh^{-2}(\vec x\,.\,\vec\ek_1)\,\vec \ek_1 \otimes \vec \ek_1$
\\
(\ref{eq:gcatenoid}) & $2\,\tanh(\vec x\,.\,\vec\ek_1)\,\vec \ek_1$
& $2\,\cosh^{-2}(\vec x\,.\,\vec\ek_1)\,\vec \ek_1 \otimes \vec \ek_1$
\\
(\ref{eq:gtorus}) & $-2\,\frac{\sin(\vec x\,.\,\vec\ek_2)}
{[\mathfrak s^2 + 1]^\frac12 - \cos (\vec x\,.\,\vec\ek_2)}\,\vec\ek_2$ &
$2\,\frac{1 - [\mathfrak s^2 + 1]^\frac12\,\cos (\vec x\,.\,\vec\ek_2)}
{([\mathfrak s^2 + 1]^\frac12 - \cos (\vec x\,.\,\vec\ek_2))^2}
\,\vec\ek_2 \otimes \vec \ek_2$
\\
\hline
\end{tabular}
\label{tab:g}
\end{table}%

\subsection{Based on $\kappa^{m+1}$} \label{sec:51}

We propose the following fully discrete approximation of
$(\BGNpwf_h)^h$.
\\ \noindent
$(\BGNpwf_m)^h$:
Let $(\vec X^0,\kappa^0,\vec Y^0) \in \Vh \times V^h \times \Vh$. 
For $m=0,\ldots,M-1$, we define
$\kappa^m_g = \pi^h\left[ g^{-\frac12}(\vec X^m)\,[\kappa^m - \tfrac12\,
\frac{\vec\omega^m}{|\vec\omega^m|}\,.\,\nabla\,\ln g(\vec X^m)]\right]$, 
and then 
find $(\vec X^{m+1},\kappa^{m+1}, \vec Y^{m+1})\in$ \linebreak
$\Vh \times V^h \times \Vh$ such that
\begin{subequations} \label{eq:fdpwf}
\begin{align} 
& \left(g^{\frac{3}{2}}(\vec X^m)\,\frac{\vec X^{m+1} - \vec X^m}{\ttau_m}\,.\,
\vec\omega^m, \vec\chi\,.\,\vec\omega^m\,
|\vec X^m_\rho| \right)^h
- \left(\vec Y^{m+1}_s, \vec\chi_s\,|\vec X^m_\rho| \right)
\nonumber \\ 
& \quad
+ \left(\vec Y^m_s\,.\,\vec\tau^m, \vec\chi_s\,.\,\vec\tau^m
\,|\vec X^m_\rho| \right)
= -\tfrac12 \left( g^\frac12(\vec X^m)\left[ 
(\kappa^m_g)^2 +2\,\lambda\right] , \vec\chi_s\,.\,\vec\tau^m
\,|\vec X^m_\rho| \right)^h \nonumber \\ & \qquad
+ \tfrac14 \left( g^\frac12(\vec X^m)\left[ (\kappa^m_g)^2 -2\,\lambda\right],
\vec\chi\,.\,(\nabla\,\ln\,g(\vec X^m))\,|\vec X^m_\rho| \right)^h
\nonumber \\ & \qquad
+ \tfrac12 \left( \kappa^m_g\,\frac{\vec\omega^m}{|\vec\omega^m|},
(D^2\,\ln\,g(\vec X^m))\,
\vec\chi\,|\vec X^m_\rho| \right)^h 
+ \left(\kappa^{m}\,(\vec Y^m)^\perp,\vec\chi_s\,|\vec X^m_\rho| \right)^h
\nonumber \\ & \qquad  
-\tfrac12\left( \frac{\kappa^m_g}{|\vec\omega^m|}\, 
\nabla \ln\,g(\vec X^m)
\,.\,\left(\frac{\vec\omega^m}{|\vec \omega^m|}\right)^\perp, 
\vec\chi_s\,.\,\frac{\vec\omega^m}{|\vec \omega^m|}\,|\vec X^m_\rho| \right)^h 
\qquad \forall\ \vec\chi \in \Vh\,, \label{eq:fdpwfa}\\
&\left( g^{\frac12}(\vec X^m)\,\vec Y^{m+1}\,.\,\vec\omega^m,
\vec\eta\,.\,\vec\omega^m \,|\vec X^m_\rho| \right)^h 
+ \tfrac12 \left( 
\frac{\vec\omega^m}{|\vec\omega^m|}
\,.\,\nabla\,\ln g(\vec X^m),\vec\eta\,.\,\vec\omega^m
\,|\vec X^m_\rho| \right)^h 
\nonumber \\ & \hspace{5cm}
+ \left( \vec X^{m+1}_s,\vec\eta_s\,|\vec X^m_\rho|\right) = 0
\qquad \forall\ \vec\eta \in \Vh \label{eq:fdpwfb}
\end{align}
\end{subequations}
and
\begin{equation} \label{eq:kappam}
\kappa^{m+1} = \pi^h \left[ 
g^{\frac12}(\vec X^m)\,\vec Y^{m+1}\,.\,\vec\omega^m + \tfrac12\,
\frac{\vec\omega^m}{|\vec\omega^m|}\,.\,\nabla\,\ln g(\vec X^m) \right].
\end{equation}
Notice that (\ref{eq:fdpwfb}) was obtained on combining (\ref{eq:kappam}) with
a fully discrete variant of (\ref{eq:sideh}), and noting (\ref{eq:omegah}),
in order to obtain a lower
dimensional linear system to solve for the unknowns $\vec X^{m+1}$ 
and $\vec Y^{m+1}$ that is decoupled from (\ref{eq:kappam}). 
Moreover,
(\ref{eq:fdpwfa}) is a fully discrete approximation of (\ref{eq:dLdxflow2h}),
on noting the definition of $\kappa^m_g$.

We make the following mild assumption.
\begin{tabbing}
$(\mathfrak A)^h$\quad \=
Let $|\vec{X}^m_\rho| > 0$ for almost all $\rho\in I$, let
$\dim \spa \{ \vec \omega^m (q_j) : j = 1,\ldots,J\} = 2$, 
\\ \> and let $\vec\omega^m(q_j) \not= \vec 0$, $j = 1,\ldots,J$.
\end{tabbing}
\revised{%
The above assumption can only be violated if all the 
vertex normals $\vec\omega^m(q_j)$ of $\Gamma^m$ are collinear,
or if two neighbouring edges of $\Gamma^m$ overlap.
Clearly, this almost never happens in practice, and it certainly cannot 
happen if $\Gamma^m$ has no self-intersections. See also 
\cite[Remark~2.2]{triplej} for more details.}

\begin{lemma} \label{lem:ex}
Let the assumption $(\mathfrak A)^h$ hold.
Then there exists a unique solution \linebreak
$(\vec X^{m+1}, \kappa^{m+1},\vec Y^{m+1}) \in \Vh \times V^h\times\Vh$ to 
$(\BGNpwf_m)^h$.
\end{lemma}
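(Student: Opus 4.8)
The plan is to observe that $(\BGNpwf_m)^h$ is, for given data at time level $m$, a square finite-dimensional linear problem. Indeed, \eqref{eq:kappam} determines $\kappa^{m+1}$ explicitly and uniquely once $\vec Y^{m+1}$ is known, so the only genuine unknowns are $(\vec X^{m+1},\vec Y^{m+1}) \in \Vh\times\Vh$, coupled through \eqref{eq:fdpwfa} and \eqref{eq:fdpwfb}. As the number of test functions matches the number of unknowns, existence is equivalent to uniqueness, and I would reduce the claim to showing that the associated homogeneous system has only the trivial solution.

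To this end, let $(\vec X,\vec Y)\in\Vh\times\Vh$ solve the homogeneous versions of \eqref{eq:fdpwfa} and \eqref{eq:fdpwfb} (all data at time level $m$ set to zero). I would then test the homogeneous \eqref{eq:fdpwfa} with $\vec\chi=\vec X$ and the homogeneous \eqref{eq:fdpwfb} with $\vec\eta=\vec Y$, and add the two identities. The key structural point --- the same skew pairing between the $\vec X$- and $\vec Y$-blocks that underlies the gradient-flow stability in Theorem~\ref{thm:stab} --- is that the coupling terms $(\vec Y_s,\vec X_s\,|\vec X^m_\rho|)$ and $(\vec X_s,\vec Y_s\,|\vec X^m_\rho|)$ cancel by symmetry of the $L^2$--inner product, leaving
\begin{equation*}
\tfrac{1}{\ttau_m}\left(g^{\frac32}(\vec X^m)\,(\vec X\,.\,\vec\omega^m)^2, |\vec X^m_\rho|\right)^h + \left(g^{\frac12}(\vec X^m)\,(\vec Y\,.\,\vec\omega^m)^2, |\vec X^m_\rho|\right)^h = 0 .
\end{equation*}
Since $g>0$ and $|\vec X^m_\rho|>0$ by $(\mathfrak A)^h$, and both summands are non-negative, each nodal contribution must vanish, giving $(\vec X\,.\,\vec\omega^m)(q_j)=0$ and $(\vec Y\,.\,\vec\omega^m)(q_j)=0$ for $j=1,\ldots,J$.

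I would then feed these nodal orthogonality relations back into the two homogeneous equations. Because the $g^{\frac32}$- and $g^{\frac12}$-weighted terms are mass-lumped, they depend only on nodal values and hence now vanish for every test function; what survives is $(\vec Y_s,\vec\chi_s\,|\vec X^m_\rho|)=0$ for all $\vec\chi\in\Vh$ and $(\vec X_s,\vec\eta_s\,|\vec X^m_\rho|)=0$ for all $\vec\eta\in\Vh$. Choosing $\vec\chi=\vec Y$ and $\vec\eta=\vec X$ and using $|\vec X^m_\rho|>0$ forces $\vec X_s=\vec Y_s=\vec 0$, so $\vec X$ and $\vec Y$ are constant on $I$. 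A constant vector orthogonal to every $\vec\omega^m(q_j)$ must be orthogonal to $\spa\{\vec\omega^m(q_j):j=1,\ldots,J\}=\bR^2$, the last equality by $(\mathfrak A)^h$; hence $\vec X=\vec Y=\vec 0$. This establishes uniqueness, and thus existence, for the linear system, and \eqref{eq:kappam} then fixes $\kappa^{m+1}$ uniquely.

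The main obstacle is not analytic but structural: one must choose the test functions so that the indefinite coupling cancels and a positive-definite remainder survives, and here the mass lumping in \eqref{eq:fdpwfa}--\eqref{eq:fdpwfb} is essential, since it is what allows the weighted quadratic terms to be controlled purely through the nodal values $(\vec X\,.\,\vec\omega^m)(q_j)$. The final step, passing from ``constant and nodally orthogonal to all $\vec\omega^m(q_j)$'' to ``identically zero'', is precisely where the non-degeneracy hypothesis $\dim\spa\{\vec\omega^m(q_j)\}=2$ in $(\mathfrak A)^h$ enters; without it the homogeneous problem would admit nontrivial constant translations.
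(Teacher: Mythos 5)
Your proposal is correct and follows essentially the same route as the paper's own proof: reduce existence of the linear system to uniqueness, test the homogeneous system with $\vec\chi=\vec X$ and $\vec\eta=\vec Y$ so the skew coupling $(\vec Y_s,\vec X_s\,|\vec X^m_\rho|)$ cancels and the mass-lumped terms force $\pi^h[\vec X\,.\,\vec\omega^m]=\pi^h[\vec Y\,.\,\vec\omega^m]=0$, then test with $\vec\chi=\vec Y$ and $\vec\eta=\vec X$ to deduce $\vec X$, $\vec Y$ are constant, and finally invoke $\dim\spa\{\vec\omega^m(q_j)\}=2$ from $(\mathfrak A)^h$ to conclude $\vec X=\vec Y=\vec 0$, with $\kappa^{m+1}$ then fixed by \eqref{eq:kappam}. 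Your additional remarks on the role of mass lumping and of the span condition are accurate and consistent with the paper's argument.
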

\begin{proof}
As (\ref{eq:fdpwf}) is linear, existence follows from uniqueness.
To investigate the latter, we consider the system: 
Find $(\vec X,\vec Y) \in \Vh \times \Vh$ such that
\begin{subequations}
\begin{align} 
\left(g^\frac32(\vec X^m)\,
\revised{\vec X}\,.\,\vec\omega^m,
\vec\chi\,.\,\vec\omega^m\,|\vec X^m_\rho| \right)^h
- \ttau_m \left(\vec Y_s, \vec\chi_s\,|\vec X^m_\rho| \right)
&= 0 \qquad \forall \ \vec \chi \in \Vh\,,
\label{eq:proofa}\\
\left( g^{\frac12}(\vec X^m)\,\vec Y\,.\,\vec\omega^m,
\vec\eta\,.\,\vec\omega^m \,|\vec X^m_\rho| \right)^h 
+ \left( \vec X_s,\vec\eta_s\,|\vec X^m_\rho|\right) &= 0
\qquad \forall\ \vec\eta \in \Vh\,. \label{eq:proofb}
\end{align}
\end{subequations}
Choosing $\vec\chi = \vec X$ in (\ref{eq:proofa}) and $\vec\eta = \vec Y$
in (\ref{eq:proofb}), and combining, yields that 
\begin{equation} \label{eq:Xomega}
\pi^h\,[\vec X\,.\,\vec\omega^m] = \pi^h\,[\vec Y\,.\,\vec\omega^m] = 0
\in V^h\,. 
\end{equation}
As a consequence, it follows from choosing $\vec\chi = \vec Y$ in 
(\ref{eq:proofa}) and $\vec\eta = \vec X$ in (\ref{eq:proofb}) that
$\vec X$ and $\vec Y$ are constant vectors. Combining (\ref{eq:Xomega}) and
the assumption $(\mathfrak A)^h$ then yields that
$\vec X = \vec Y = \vec 0 \in \Vh$.

Hence we have shown the existence of a unique $(\vec X^{m+1}, \vec Y^{m+1}) 
\in \Vh \times\Vh$ solving (\ref{eq:fdpwf}), which via (\ref{eq:kappam}) yields
existence and uniqueness of $\kappa^{m+1} \in V^h$.
\end{proof}

\subsection{Based on $\kappa^{m+1}_g$} \label{sec:52}
We propose the following fully discrete approximation of
$(\BGNpwfwf_h)^\diamond$.
\\ \noindent
$(\BGNpwfwf_m)^\diamond$:
Let $(\vec X^0,\kappa^0_g,\vec Y^0_g) \in \Vh \times V^h \times \Vh$. 
For $m=0,\ldots,M-1$, 
find $(\vec X^{m+1}, \kappa^{m+1}_g,$ $ \vec Y^{m+1}_g) 
\in \Vh \times V^h \times \Vh$ such that
\begin{subequations} \label{eq:B8}
\begin{align} 
& \left(g(\vec X^m)\,\frac{\vec X^{m+1} - \vec X^m}{\ttau_m}\,.\,
\vec\omega^m, \vec\chi\,.\,\vec\omega^m\,
|\vec X^m_\rho|_g \right)^\diamond
- \left((\vec Y^{m+1}_g)_s, \vec\chi_s\,|\vec X^m_\rho|_g \right)^\diamond
\nonumber \\ & \qquad
+ \left((\vec Y^m_g)_s\,.\,\vec\tau^m, \vec\chi_s\,.\,\vec\tau^m
\,|\vec X^m_\rho|_g \right)^\diamond
\nonumber \\ & 
 = -\tfrac12 \left( (\kappa^m_g\,)^2 + 2\,\lambda
- \vec Y^m_g\,.\,\nabla\,\ln\,g(\vec X^m), \left[\vec\chi_s\,.\,\vec\tau^m + 
\tfrac12\,\vec\chi\,.\,\nabla\,\ln\,g(\vec X^m)\right] |\vec X^m_\rho|_g
 \right)^\diamond \nonumber \\ & \qquad
+ \tfrac12 \left((D^2\,\ln\,g(\vec X^m))\,\vec Y^m_g, \vec\chi\,
|\vec X^m_\rho|_g \right)^\diamond \nonumber \\ & \qquad
+  \left( g^\frac12(\vec X^m)\,\kappa^m_g\,\vec Y^m_g\,.\,\vec\nu^m
+ \tfrac12\,(\vec Y^m_g)_s\,.\,\vec\tau^m, \vec\chi\,.\,
(\nabla\,\ln\,g(\vec X^m))\,
|\vec X^m_\rho|_g \right)^\diamond \nonumber \\ & \qquad
+ \left( g^\frac12(\vec X^m)\,\kappa^m_g,
\vec\chi_s\,.\,(\vec Y^m_g)^\perp\,|\vec X^m_\rho|_g \right)^\diamond
\quad \forall\ \vec\chi \in \Vh\,, \label{eq:B8a} \\
&\left(\kappa^{m+1}_g - g^\frac12(\vec X^m)\,\vec Y^{m+1}_g\,.\,\vec\nu^m, 
\chi\,|\vec X^m_\rho|_g \right)^\diamond = 0
\quad \forall\ \chi \in V^h\,, \label{eq:B8b} \\
&
\left(g^\frac12(\vec X^m)\,\kappa_g^{m+1}\,\vec\nu^m, 
\vec\eta\,|\vec X^m_\rho|_g \right)^\diamond
+ \left(\vec X^{m+1}_s,\vec\eta_s\,|\vec X^m_\rho|_g\right)^\diamond
+ \tfrac12 \left( \nabla\,\ln\,g(\vec X^m), \vec\eta\,|\vec
X^m_\rho|_g\right)^\diamond 
\nonumber \\ & \hspace{8cm}
= 0
\quad \forall\ \vec\eta \in \Vh\,. \label{eq:B8c}
\end{align}
\end{subequations}
Of course, in the case $(\cdot,\cdot)^\diamond = (\cdot,\cdot)^h$, 
(\ref{eq:B8b}) gives rise to
$\kappa^{m+1}_g = \pi^h\,[g^\frac12(\vec X^m)$ 
$\vec Y^{m+1}_g\,.\,
\vec\omega^m]$,
on noting (\ref{eq:omegah}), and so $\kappa^{m+1}_g$ can be eliminated from 
(\ref{eq:B8a}) to give rise to a coupled linear system 
involving only $\vec X^{m+1}$ and $\vec Y^{m+1}_g$, similarly to 
(\ref{eq:fdpwf}).

We make the following mild assumption.
\begin{tabbing}
$(\mathfrak B)^\diamond$\quad \=
Let $|\vec{X}^m_\rho| > 0$ for almost all $\rho\in I$, and let
$\dim \spa \mathcal Z^\diamond = 2$, where \\ \> $\mathcal Z^\diamond = 
\left\{ \left( g^\frac12(\vec X^m)\,\vec\nu^m, 
\chi\, |\vec X^m_\rho|_g \right)^\diamond : \chi \in V^h \right \} 
\subset \bR^2$.
\end{tabbing}
\revised{%
In the case $(\cdot,\cdot)^\diamond = (\cdot,\cdot)^h$ the above assumption
collapses to the first part of $(\mathfrak A)^h$, as was demonstrated below
(3.11) in \cite{hypbol}. For more general quadrature rules, it is only violated
if some
$g$-weighted vertex normals of $\Gamma^m$ are all collinear, which means that
$(\mathfrak B)^\diamond$ is clearly a very mild constraint.}

\begin{lemma} \label{lem:exg}
Let the assumptions $(\mathfrak A)^h$ and 
$(\mathfrak B)^\diamond$ hold.
Then there exists a unique solution
$(\vec X^{m+1}, \kappa^{m+1}_g, \vec Y^{m+1}_g) \in \Vh \times V^h \times \Vh$
to $(\BGNpwfwf_m)^\diamond$.
\end{lemma}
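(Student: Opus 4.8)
Since the system $(\BGNpwfwf_m)^\diamond$, i.e.\ (\ref{eq:B8a})--(\ref{eq:B8c}), is affine and square on the finite-dimensional space $\Vh\times V^h\times\Vh$, existence of a solution follows once uniqueness is shown, exactly as in Lemma~\ref{lem:ex}. The plan is therefore to pass to the homogeneous problem: find $(\vec X,\kappa_g,\vec Y)\in\Vh\times V^h\times\Vh$ satisfying (\ref{eq:B8a})--(\ref{eq:B8c}) with every term not carrying an unknown at the new time level removed (the previous-level contributions on the right-hand side of (\ref{eq:B8a}), together with the affine term $\tfrac12(\nabla\,\ln\,g(\vec X^m),\vec\eta\,|\vec X^m_\rho|_g)^\diamond$ in (\ref{eq:B8c})), and to prove that this forces $\vec X=\vec0$, $\kappa_g=0$ and $\vec Y=\vec0$. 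Concretely the homogeneous equations read $(g(\vec X^m)\,\ttau_m^{-1}\vec X\,.\,\vec\omega^m,\vec\chi\,.\,\vec\omega^m\,|\vec X^m_\rho|_g)^\diamond-(\vec Y_s,\vec\chi_s\,|\vec X^m_\rho|_g)^\diamond=0$, the relation $(\kappa_g-g^\frac12(\vec X^m)\vec Y\,.\,\vec\nu^m,\chi\,|\vec X^m_\rho|_g)^\diamond=0$, and $(g^\frac12(\vec X^m)\kappa_g\vec\nu^m,\vec\eta\,|\vec X^m_\rho|_g)^\diamond+(\vec X_s,\vec\eta_s\,|\vec X^m_\rho|_g)^\diamond=0$.

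The heart of the argument is a discrete energy identity that mimics the stability bound (\ref{eq:Qhstab}). I would test the homogeneous (\ref{eq:B8a}) with $\vec\chi=\vec X$ and the homogeneous (\ref{eq:B8c}) with $\vec\eta=\vec Y$; by symmetry of $(\cdot,\cdot)^\diamond$ the cross terms $\pm(\vec X_s,\vec Y_s\,|\vec X^m_\rho|_g)^\diamond$ cancel upon adding, leaving $\ttau_m^{-1}(g(\vec X^m)(\vec X\,.\,\vec\omega^m)^2,|\vec X^m_\rho|_g)^\diamond+(g^\frac12(\vec X^m)\kappa_g\vec\nu^m,\vec Y\,|\vec X^m_\rho|_g)^\diamond=0$. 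Testing the homogeneous (\ref{eq:B8b}) with $\chi=\kappa_g$ rewrites the second term as $(\kappa_g^2,|\vec X^m_\rho|_g)^\diamond$, so the identity becomes a sum of two nonnegative contributions equal to zero. Since $g>0$, $|\vec X^m_\rho|_g>0$ (using $|\vec X^m_\rho|>0$) and the weights $w_k$ are strictly positive, both vanish, whence $\kappa_g$ and $\vec X\,.\,\vec\omega^m$ vanish at every quadrature point.

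The three unknowns are then recovered in turn. As $\kappa_g\in V^h$ is piecewise linear and vanishes at the $K\ge2$ distinct quadrature nodes of each element, it vanishes identically. Inserting $\kappa_g=0$ into the homogeneous (\ref{eq:B8c}) and choosing $\vec\eta=\vec X$ gives $(|\vec X_s|^2,|\vec X^m_\rho|_g)^\diamond=0$; since $\vec X_s$ is piecewise constant this forces $\vec X_s\equiv\vec0$, so $\vec X$ is a global constant $\vec c$. Now $\vec c\,.\,\vec\omega^m$ is piecewise linear and vanishes at the quadrature nodes, hence identically, so $\vec c\,.\,\vec\omega^m(q_j)=0$ for all $j$; assumption $(\mathfrak A)^h$, which gives $\dim\spa\{\vec\omega^m(q_j):j=1,\dots,J\}=2$, then yields $\vec c=\vec0$, i.e.\ $\vec X\equiv\vec0$. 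Feeding $\vec X=\vec0$, $\kappa_g=0$ into the homogeneous (\ref{eq:B8a}) and testing with $\vec\chi=\vec Y$ shows $\vec Y$ is a constant $\vec d$; the homogeneous (\ref{eq:B8b}) then reduces to $(g^\frac12(\vec X^m)\vec d\,.\,\vec\nu^m,\chi\,|\vec X^m_\rho|_g)^\diamond=0$ for all $\chi\in V^h$, i.e.\ $\vec d\perp\mathcal Z^\diamond$, so that $(\mathfrak B)^\diamond$ (which asserts $\dim\spa\mathcal Z^\diamond=2$) gives $\vec d=\vec0$.

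The step I expect to be the main obstacle is the passage from \emph{vanishing at the quadrature points} to \emph{vanishing identically}, which in the mass-lumped Lemma~\ref{lem:ex} was automatic because there evaluation coincides with nodal interpolation. For a general rule one must keep track of the polynomial degree of each quantity: $\kappa_g$ and, once $\vec X$ is known to be constant, $\vec c\,.\,\vec\omega^m$ are piecewise linear, so $K\ge2$ distinct nodes per element suffice, whereas the genuinely piecewise-quadratic quantity $\vec X\,.\,\vec\omega^m$ is only controlled at the quadrature points. The argument must therefore be ordered so that this quadratic quantity is exploited only \emph{after} $\vec X$ has been shown to be constant, and the geometric assumptions $(\mathfrak A)^h$ and $(\mathfrak B)^\diamond$ invoked precisely at the final elimination of the constant vectors $\vec c$ and $\vec d$.
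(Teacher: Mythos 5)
Your proposal is correct and follows essentially the same route as the paper's proof: the same homogeneous system, the same test functions $\vec\chi=\vec X$, $\chi=\kappa_g$, $\vec\eta=\vec Y_g$ yielding the identity $\left(g(\vec X^m)\,(\vec X\,.\,\vec\omega^m)^2,|\vec X^m_\rho|_g\right)^\diamond+\ttau_m\left((\kappa_g)^2,|\vec X^m_\rho|_g\right)^\diamond=0$, the same use of $K\geq2$ and the positivity of the weights, the same care in exploiting the piecewise quadratic $\vec X\,.\,\vec\omega^m$ only after $\vec X$ is known to be constant, and the same final appeal to $(\mathfrak A)^h$ and $(\mathfrak B)^\diamond$ to eliminate the constant vectors. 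The only (immaterial) difference is ordering: the paper deduces that $\vec Y_g$ is constant directly from $\left(g(\vec X^m)\,\vec X\,.\,\vec\omega^m,\eta\,|\vec X^m_\rho|_g\right)^\diamond=0$ for all continuous $\eta$, before concluding $\vec X=\vec 0$, whereas you first establish $\vec X=\vec 0$ and then obtain $\vec Y$ constant.
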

\begin{proof}
As (\ref{eq:B8}) is linear, existence follows from uniqueness. 
To investigate the latter, we consider the system: 
Find $(\vec X,\kappa_g, \vec Y_g) \in \Vh \times V^h \times \Vh$ such that
\begin{subequations}
\begin{align} 
\left(g(\vec X^m)\,\vec X\,.\,\vec\omega^m, \vec\chi\,.\,\vec\omega^m\,
|\vec X^m_\rho|_g \right)^\diamond
- \ttau_m \left((\vec Y_g)_s, \vec\chi_s\,|\vec X^m_\rho|_g \right)^\diamond
& = 0 \quad \forall\ \vec\chi \in \Vh\,, \label{eq:proofga} \\
\left(\kappa_g - g^\frac12(\vec X^m)\,\vec Y_g\,.\,\vec\nu^m, 
\chi\,|\vec X^m_\rho|_g \right)^\diamond & = 0
\quad \forall\ \chi \in V^h\,, \label{eq:proofgb} \\
\left(g^\frac12(\vec X^m)\,\kappa_g\,\vec\nu^m, 
\vec\eta\,|\vec X^m_\rho|_g \right)^\diamond
+ \left(\vec X_s,\vec\eta_s\,|\vec X^m_\rho|_g\right)^\diamond
& = 0
\quad \forall\ \vec\eta \in \Vh\,. \label{eq:proofgc}
\end{align}
\end{subequations}
Choosing $\vec\chi = \vec X$ in (\ref{eq:proofga}),
$\chi = \kappa_g$ in (\ref{eq:proofgb}) and $\vec\eta = \vec Y_g$
in (\ref{eq:proofgc}) yields that 
\begin{equation*} 
\left(g(\vec X^m)\,(\vec X\,.\,\vec\omega^m)^2, 
|\vec X^m_\rho|_g \right)^\diamond
+ \ttau_m \left((\kappa_g)^2,|\vec X^m_\rho|_g \right)^\diamond = 0\,,
\end{equation*}
and so it follows from (\ref{eq:Idiamond}), recall $K\geq2$, 
and the positivities of $g(\vec X^m)$ and $|\vec X^m_\rho|$, that 
\begin{equation} \label{eq:Xomegag}
\kappa_g = 0 \in V^h
\quad\text{and}\quad
\left(g(\vec X^m)\,\vec X\,.\,\vec\omega^m, \eta\,
|\vec X^m_\rho|_g \right)^\diamond = 0 \qquad \forall\ \eta \in C(\overline I)
\,. 
\end{equation}
As a consequence, it follows from choosing $\vec\chi = \vec Y_g$ in 
(\ref{eq:proofga}) and $\vec\eta = \vec X$ in (\ref{eq:proofgc}) that
$\vec X$ and $\vec Y_g$ are constant vectors. Combining 
(\ref{eq:proofgb}), $\kappa_g = 0$ and the assumption $(\mathfrak B)^\diamond$ 
then yields that $\vec Y_g = \vec 0 \in \Vh$.
Moreover, it follows from (\ref{eq:Xomegag}), (\ref{eq:Idiamond}), 
recall $K\geq2$, and $\vec X$ being a constant that
$\vec X\,.\,\vec\omega^m = 0 \in V^h$. Combining this with 
the assumption $(\mathfrak A)^h$ yields that
$\vec X = \vec 0 \in \Vh$.
Hence there exists a unique solution
$(\vec X^{m+1}, \kappa^{m+1}_g, \vec Y^{m+1}_g) \in \Vh \times V^h \times \Vh$
to $(\BGNpwfwf_m)^\diamond$.
\end{proof}

\setcounter{equation}{0}
\section{Numerical results} \label{sec:nr}

Unless otherwise stated, in all our computations we set $\lambda=0$.
For the scheme $(\BGNpwfwf_m)^\diamond$ we either consider
$(\BGNpwfwf_m)^h$, recall (\ref{eq:ip0}), or
$(\BGNpwfwf_m)^\star$, where $(\cdot,\cdot)^\star$ denotes a quadrature that 
is exact for polynomials of degree up to five.

On recalling \revised{\eqref{eq:Wglambdah} and \eqref{eq:kappaidh}}, 
for solutions of the scheme 
$(\BGNpwf_m)^h$ we define
$W_{g,\lambda}^{m+1} = \tfrac12\,( (\vec Y^{m+1}\,.\,\vec\omega^m)^2 +
2\,\lambda, g^\frac12(\vec X^m)\, |\vec X^m_\rho| )^h$
as the natural discrete analogue of (\ref{eq:Wglambda}),
\revised{%
while for solutions of $(\BGNpwfwf_m)^\diamond$ we let}
$\widetilde W_{g,\lambda}^{m+1} =
\tfrac12\, ( (\kappa_g^{m+1})^2 + 2\,\lambda,|\vec X^m_\rho|_g )^\diamond$

We also consider the ratio
\begin{equation} \label{eq:ratio}
\ratio^m = \dfrac{\max_{j=1\to J} |\vec{X}^m(q_j) - \vec{X}^m(q_{j-1})|}
{\min_{j=1\to J} |\vec{X}^m(q_j) - \vec{X}^m(q_{j-1})|}
\end{equation}
between the longest and shortest element of $\Gamma^m$, and are often
interested in the evolution of this ratio over time.

In order to define the initial data for the 
schemes $(\BGNpwf_m)^h$ and $(\BGNpwfwf_m)^\diamond$ we define, 
given $\Gamma^0 = \vec X^0(\overline I)$, the discrete curvature vector
$\vec\kappa^0\in \Vh$ such that
\begin{equation*} 
\left( \vec\kappa^{0},\vec\eta\, |\vec X^0_\rho| \right)^h
+ \left( \vec{X}^{0}_s , \vec\eta_s\,|\vec X^0_\rho| \right)
 = 0 \quad \forall\ \vec\eta \in \Vh\,,
\end{equation*}
recall (\ref{eq:varkappa}).
Then we set
$\kappa^0 = \pi^h\left[\frac{\vec\kappa^0\,.\,\vec\omega^0}{|\vec\omega^0|}
\right]$ and, as a discrete analogue to (\ref{eq:varkappag}), we let 
$\kappa_g^0 = \pi^h\left[ 
g^{-\frac12}(\vec X^0)\left[\kappa^0 
- \tfrac{1}{2} \, \frac{\vec\omega^0}{|\vec\omega^0|}
\,.\,\nabla\,\ln g(\vec X^0)\right]\right]$.
Finally, on recalling (\ref{eq:kappaidh}) and (\ref{eq:B2}), we set
$\vec Y^0 = \vec\pi^h\left[ |\vec\omega^0|^{-2}\,
\kappa_g^0\,\vec\omega^0 \right]$ 
and $\vec Y_g^0 = \vec\pi^h\left[ g^{-\frac12}(\vec X^0)\,|\vec\omega^0|^{-2}\,
\kappa_g^0\,\vec\omega^0\right]$.

\subsection{Elliptic plane: (\ref{eq:galpha}) with $\alpha=-1$}

For the elliptic plane, we recall the true solution 
\begin{equation} \label{eq:app_ansatz}
\vec x(\rho, t) = a(t)\,\vec\ek_2 + r(t)\left[
\cos2\,\pi\,\rho\,\vec\ek_1 + \sin2\,\pi\,\rho\,\vec\ek_2 \right]
\qquad \rho \in I\,,
\end{equation}
with
\begin{equation} \label{eq:appB_ODE}
a(t) = 0 \quad\text{and}\quad
\ddt\, r^4(t) = \tfrac1{8}\,(1 - \alpha^2\,r^4(t))\,
(1 - 6\,\alpha\,r^2(t) + \alpha^2\,r^4(t)) \,,
\end{equation}
for $\alpha = -1$, from Appendix~A.2 in \cite{hypbol}. 
\revised{An explicit formula for $r(t)$ is stated in \cite[(A.17)]{hypbol}.}
We use this true solution for a convergence test. 
To this end, we start with the initial data 
\begin{equation} \label{eq:X0}
\vec X^0(q_j) = a(0)\,\vec\ek_2 + 
r(0) \begin{pmatrix} 
\cos[2\,\pi\,q_j + 0.1\,\sin(2\,\pi\,q_j)] \\
\sin[2\,\pi\,q_j + 0.1\,\sin(2\,\pi\,q_j)]
\end{pmatrix}, \quad j = 1,\ldots,J\,,
\end{equation}
recall (\ref{eq:Jequi}), with $r(0)=1.5$ and $a(0)=0$,
for $J \in \{32,64,128,256,512\}$. We compute the error 
$\errorXx = \max_{m=1,\ldots,M} \max_{j=1,\ldots,J} | 
|\vec X^m(q_j) - a(t_m)\,\vec\ek_2| - r(t_m)|$
over the time interval $[0,1]$ between the true solution (\ref{eq:app_ansatz}) 
and the discrete solutions for the schemes
$(\BGNpwf_m)^h$, $(\BGNpwfwf_m)^h$ and $(\BGNpwfwf_m)^\star$. 
We note that the circle is shrinking, and
reaches a radius $r(T) = 1.148$ at time $T=1$.
Here, and in the convergence experiments that follow, 
we use the time step size $\ttau=0.1\,h^2_{\Gamma^0}$,
where $h_{\Gamma^0}$ is the maximal edge length of $\Gamma^0$.
The computed errors are reported in Table~\ref{tab:wf4}.
\begin{table}
\center
\caption{Errors for the convergence test for \eqref{eq:app_ansatz} with
\eqref{eq:appB_ODE} for $\alpha=-1$, with $r(0) = 1.5$,
over the time interval $[0,1]$.
\revised{The ratios \eqref{eq:ratio} at time $t=1$ for the last row are
$1.14$, $1.49$ and $1.14$, respectively.}
}
\begin{tabular}{|r|c|c|c|c|c|c|}
\hline
& \multicolumn{2}{c|}{$(\BGNpwf_m)^h$}&
\multicolumn{2}{c|}{$(\BGNpwfwf_m)^h$} &
\multicolumn{2}{c|}{$(\BGNpwfwf_m)^\star$} \\
 $h_{\Gamma^0}$ & $\errorXx$ & EOC & $\errorXx$ & EOC& $\errorXx$ & EOC 
\\ \hline
2.1544e-01&7.1380e-03&--- &1.4510e-02&--- &1.2582e-02&--- \\
1.0792e-01&1.7446e-03&2.04&3.5351e-03&2.04&3.0547e-03&2.05\\
5.3988e-02&4.3377e-04&2.01&8.7838e-04&2.01&7.5835e-04&2.01\\
2.6997e-02&1.0829e-04&2.00&2.1926e-04&2.00&1.8926e-04&2.00\\
1.3499e-02&2.7064e-05&2.00&5.4795e-05&2.00&4.7295e-05&2.00\\
\hline
\end{tabular}
\label{tab:wf4}
\end{table}%

\subsection{Hyperbolic disk: (\ref{eq:galpha}) with $\alpha=1$}

For the hyperbolic disk, we recall the true solution (\ref{eq:app_ansatz}),
(\ref{eq:appB_ODE}) for $\alpha = 1$, from Appendix~A.2 in \cite{hypbol}. 
\revised{%
A nonlinear equation satisfied by $r(t)$ is stated in \cite[(A.19)]{hypbol},
which we solve in practice with the help of a Newton iteration.}
Similarly to Table~\ref{tab:wf4}
we start with the initial data (\ref{eq:X0}) with $r(0)=0.1$ and $a(0) = 0$.
We compute the error $\errorXx$ over the time interval $[0,1]$ between
the true solution (\ref{eq:app_ansatz}) 
and the discrete solutions for the schemes
$(\BGNpwf_m)^h$, $(\BGNpwfwf_m)^h$ and $(\BGNpwfwf_m)^\star$. 
We note that the circle is expanding, and
reaches a radius $r(T) = 0.404$ at time $T=1$.
The computed errors are reported in Table~\ref{tab:wf3}.
\revised{
Surprisingly, and in contrast to the other tables, the ratio \eqref{eq:ratio}
reaches the value $1$ for all three schemes, and for all presented values of
$J$, at the final time. It is not clear why the tangential motion implicit in
$(\BGNpwfwf_h)^\diamond$ appears to lead to equidistribution in this example,
but it may have to do with the fact that we compute an expanding circle
solution in $\mathbb{D}_1$, the hyperbolic disk, recall \eqref{eq:galpha}. 
For more general evolutions we do not observe equidistribution for
$(\BGNpwfwf_m)^h$ or $(\BGNpwfwf_m)^\star$ in practice.
}
\begin{table}
\center
\caption{Errors for the convergence test for \eqref{eq:app_ansatz} with
\eqref{eq:appB_ODE} for $\alpha=1$, with $r(0) = 0.1$,
over the time interval $[0,1]$.
\revised{The ratios \eqref{eq:ratio} at time $t=1$ for the last row are
equal to $1.00$ for all three schemes.}
}
\begin{tabular}{|r|c|c|c|c|c|c|}
\hline
 & \multicolumn{2}{c|}{$(\BGNpwf_m)^h$}&
\multicolumn{2}{c|}{$(\BGNpwfwf_m)^h$} &
\multicolumn{2}{c|}{$(\BGNpwfwf_m)^\star$} \\
$h_{\Gamma^0}$ & $\errorXx$ & EOC & $\errorXx$ & EOC& $\errorXx$ & EOC 
\\ \hline
2.1544e-01&1.8356e-03&--- &1.8655e-03&--- &2.3602e-03&--- \\
1.0792e-01&4.5233e-04&2.03&4.5938e-04&2.03&5.8378e-04&2.02\\
5.3988e-02&1.1270e-04&2.01&1.1444e-04&2.01&1.4583e-04&2.00\\
2.6997e-02&2.8151e-05&2.00&2.8590e-05&2.00&3.6450e-05&2.00\\
1.3499e-02&7.0364e-06&2.00&7.1460e-06&2.00&9.1121e-06&2.00\\
\hline
\end{tabular}
\label{tab:wf3}
\end{table}%

\subsection{Hyperbolic plane: (\ref{eq:gmu}) with $\mu = 1$}

For the hyperbolic plane, 
we recall the true solution (\ref{eq:app_ansatz}) with
\begin{equation}
a(t) = a(0)\,\exp\left(-t + \tfrac12\,\int_0^t \sigma^2(u)\;{\rm d}u \right)
\quad\text{and}\quad
r(t) = \frac{a(t)}{\sigma(t)} \,,
\label{eq:ODEr}
\end{equation}
where $\sigma$ satisfies the ODE 
$\sigma'(t) = \sigma(t)\,(1 - \tfrac12\,\sigma^2(t))\,(\sigma^2(t) - 1)$,
from Appendix~A.1 in \cite{hypbol}. 
\revised{%
A nonlinear equation satisfied by $\sigma(t)$ is stated in 
\cite[Appendix~A.1]{hypbol}, which we}
\revised{solve in practice with the help of a Newton
iteration. Moveover, $a(t)$ can be obtained from \eqref{eq:ODEr} via numerical
integration using e.g.\ Romberg's method.}
As initial data we use (\ref{eq:X0}) with $r(0)=1$ and $a(0) = 2$.
We recall from Appendix~A.1 in \cite{hypbol} 
that the circle will raise and expand. In fact,
at time $T=1$ it holds that $r(T) = 1.677$ and $a(T) = 2.411$.
The computed errors are reported in Table~\ref{tab:wf2}, and they 
\revised{can} be
compared with the corresponding numbers in \revised{\cite[Tab.\ 7]{hypbol}}.
\revised{In particular, $(\BGNpwf_m)^h$ exhibits smaller errors than
$(\mathcal{U}_m)^h$ in \cite{hypbol}, while the errors of
$(\mathcal{W}_m)^h$ in \cite{hypbol}, are very close to
$(\BGNpwfwf_m)^\star$.}
\begin{table}
\center
\caption{Errors for the convergence test for \eqref{eq:app_ansatz} with
\eqref{eq:ODEr}, with $r(0) = 1$, $a(0) = 2$,
over the time interval $[0,1]$.
\revised{The ratios \eqref{eq:ratio} at time $t=1$ for the last row are
$1.07$, $2.59$ and $1.65$, respectively.}
}
\begin{tabular}{|r|c|c|c|c|c|c|}
\hline
& \multicolumn{2}{c|}{$(\BGNpwf_m)^h$}&
\multicolumn{2}{c|}{$(\BGNpwfwf_m)^h$} &
\multicolumn{2}{c|}{$(\BGNpwfwf_m)^\star$} \\
$h_{\Gamma^0}$ & $\errorXx$ & EOC & $\errorXx$ & EOC& $\errorXx$ & EOC 
\\ \hline
2.1544e-01&1.2690e-01&--- &7.5442e-02&--- &4.3265e-02&--- \\
1.0792e-01&3.1923e-02&2.00&1.9548e-02&1.95&1.0719e-02&2.02\\
5.3988e-02&7.9911e-03&2.00&4.9076e-03&2.00&2.6764e-03&2.00\\
2.6997e-02&1.9984e-03&2.00&1.2291e-03&2.00&6.6898e-04&2.00\\
1.3499e-02&4.9966e-04&2.00&3.0741e-04&2.00&1.6723e-04&2.00\\
\hline
\end{tabular}
\label{tab:wf2}
\end{table}%
We repeat the convergence test with the initial data $r(0)=1$ and $a(0)=1.1$,
so that the circle will now sink and shrink. In fact,
at time $T=1$ it holds that $r(T) = 0.645$ and $a(T) = 0.792$.
The computed errors are reported in Table~\ref{tab:wf1}, and they 
\revised{can} be
compared with the corresponding numbers in \revised{\cite[Tab.\ 6]{hypbol}}. 
\revised{In particular, $(\BGNpwf_m)^h$ exhibits significantly smaller errors 
than $(\mathcal{U}_m)^h$ in \cite{hypbol}, and similarly 
$(\BGNpwfwf_m)^h$ and $(\BGNpwfwf_m)^\star$ show significantly smaller errors 
than $(\mathcal{W}_m)^h$ in \cite{hypbol}.}
We observe that the approximation $(\BGNpwfwf_m)^h$ exhibits non-optimal
convergence rates for this experiment,
\revised{
which appears to have two causes. Firstly, the induced tangential
motion of $(\BGNpwfwf_m)^h$ leads to a large ratio $\ratio^m$, with
comparatively large elements at the bottom of the evolving circle. And
secondly, compared to the experiments in Table~\ref{tab:wf2}, the evolving
circle is now closer to the $\vec\ek_1$--axis, and hence the associated 
singularity of $g$ has a stronger effect.}
All the other experiments,
and all the other schemes, always show the expected quadratic convergence rate.
\begin{table}
\center
\caption{Errors for the convergence test for \eqref{eq:app_ansatz} with
\eqref{eq:ODEr}, with $r(0) = 1$, $a(0) = 1.1$,
over the time interval $[0,1]$.
\revised{The ratios \eqref{eq:ratio} at time $t=1$ for the last row are
$1.07$, $2.76$ and $1.20$, respectively.}
}
\begin{tabular}{|r|c|c|c|c|c|c|}
\hline
& \multicolumn{2}{c|}{$(\BGNpwf_m)^h$}&
\multicolumn{2}{c|}{$(\BGNpwfwf_m)^h$} &
\multicolumn{2}{c|}{$(\BGNpwfwf_m)^\star$} \\
$h_{\Gamma^0}$ & $\errorXx$ & EOC & $\errorXx$ & EOC & $\errorXx$ & EOC 
\\ \hline
 2.1544e-01&2.9884e-03&--- &5.3699e-02&--- &1.1530e-02&--- \\
 1.0792e-01&9.7352e-04&1.62&1.6346e-02&1.72&2.9345e-03&1.98\\
 5.3988e-02&2.6531e-04&1.88&5.3475e-03&1.61&7.3673e-04&2.00\\
 2.6997e-02&6.7844e-05&1.97&2.5787e-03&1.05&1.8436e-04&2.00\\
 1.3499e-02&1.7057e-05&1.99&5.8915e-04&2.13&4.6102e-05&2.00\\
\hline
\end{tabular}
\label{tab:wf1}
\end{table}%

We recall that in Figures~10, 11 and 13 of \cite{hypbol}, the authors 
show some curve evolutions for elastic flow in the hyperbolic plane. 
Repeating these
simulations, for the same discretization parameters, for the newly introduced
schemes $(\BGNpwf_m)^h$, $(\BGNpwfwf_m)^h$ and $(\BGNpwfwf_m)^\star$, yields
very similar curve evolutions. As expected, the main difference is in the
evolution of the ratio (\ref{eq:ratio}),
\revised{recall Remarks~\ref{rem:equid} and \ref{rem:noequid}.}
As an example, we show the evolution
of (\ref{eq:ratio}) for the experiment in \cite[Fig.\ 10]{hypbol} in
Figure~\ref{fig:wf_cigar_r}.
\begin{figure}
\center
\includegraphics[angle=-90,width=0.3\textwidth]{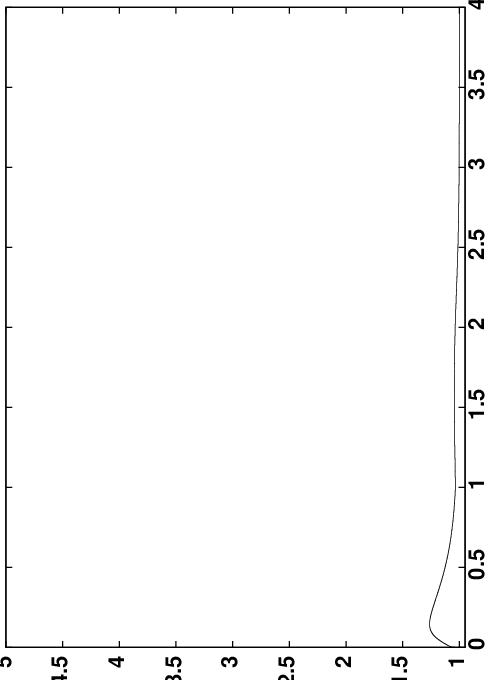}
\includegraphics[angle=-90,width=0.3\textwidth]{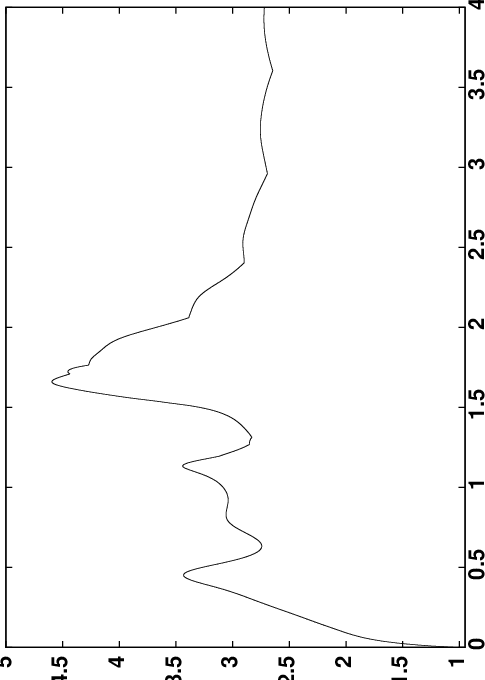}
\includegraphics[angle=-90,width=0.3\textwidth]{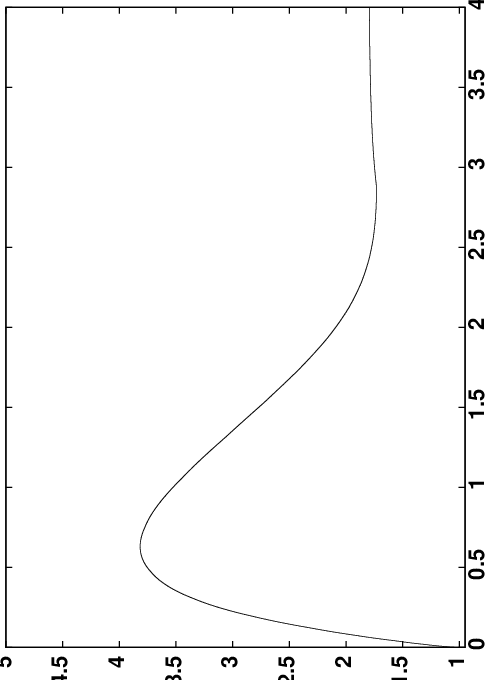}
\caption{
A plot of the ratio \eqref{eq:ratio}
for the schemes $(\BGNpwf_m)^h$, $(\BGNpwfwf_m)^h$ and $(\BGNpwfwf_m)^\star$.
}
\label{fig:wf_cigar_r}
\end{figure}%

\subsection{Geodesic elastic flow}
We \revised{begin with} 
two computations for geodesic elastic flow on a Clifford torus. 
To this end, we employ the metric induced by
(\ref{eq:gtorus}) with $\mathfrak s = 1$, so that the torus has radii $r=1$ and
$R = 2^\frac12$. As initial data we choose a circle in $H$ with radius $3$ and
centre $(0,2)^T$. For the simulation in Figure~\ref{fig:pwftorus2} we use the
scheme $(\BGNpwf_m)^h$ with the discretization parameters 
$J=256$ and $\ttau=10^{-3}$.
The scheme $(\BGNpwfwf_m)^h$ was not able to compute this evolution, 
due to a blow-up in the tangential part of $\vec Y^{m+1}$.
Hence we only present a comparison with $(\BGNpwfwf_m)^\star$, which gives
nearly identical results to $(\BGNpwf_m)^h$. However, the ratio 
(\ref{eq:ratio}) at time $t=50$ is $11.2$ for $(\BGNpwfwf_m)^\star$, while it
is only $1.1$ for $(\BGNpwf_m)^h$, 
\revised{recall the equidistribution property
from Remark~\ref{rem:equid}.}
\begin{figure}
\center
\includegraphics[angle=-90,width=0.2\textwidth]{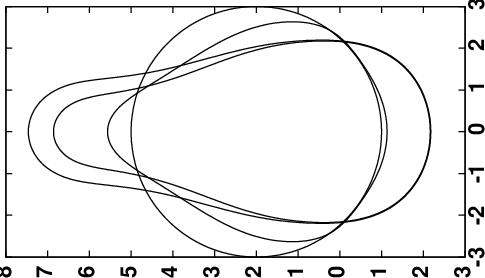}\qquad
\includegraphics[angle=-90,width=0.4\textwidth]{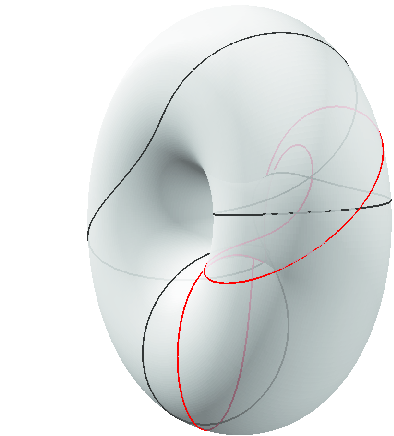} \\[2mm]
\includegraphics[angle=-90,width=0.3\textwidth]{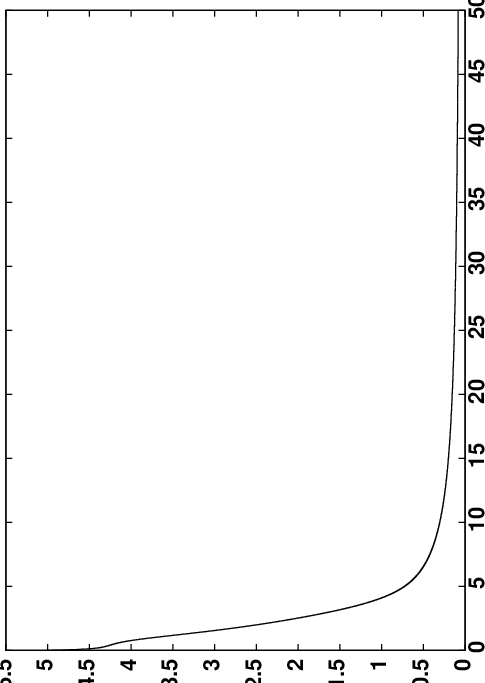}
\includegraphics[angle=-90,width=0.3\textwidth]{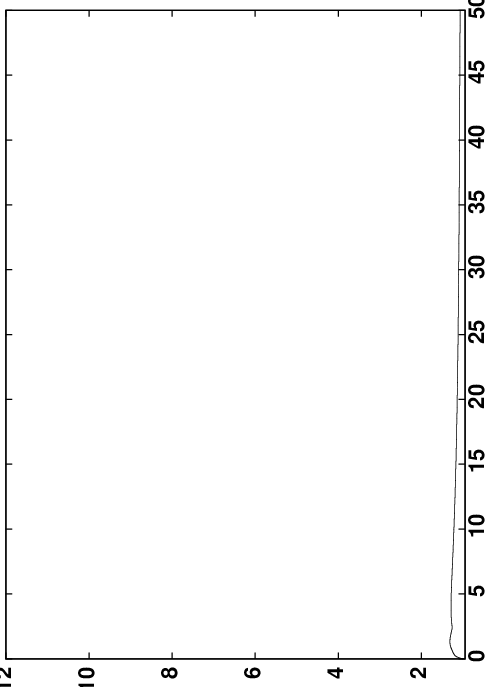}
\includegraphics[angle=-90,width=0.3\textwidth]{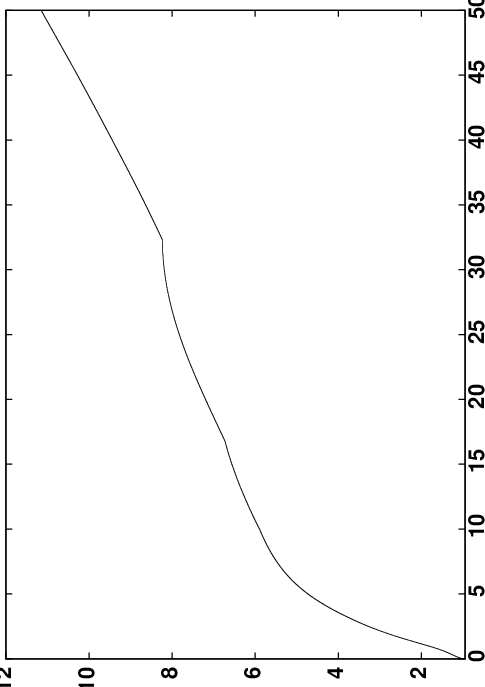}
\caption{
$(\BGNpwf_m)^h$
Geodesic elastic flow on a Clifford torus.
The solutions $\vec X^m$
at times $t = 0, 1, 10, 50$. On the right we visualize
$\vec\Phi(\vec X^m)$ at times $t=0$ (red) and $t=50$ (black), 
for \eqref{eq:gtorus} with $\mathfrak s=1$.
Below a plot of the discrete energy $W^{m+1}_{g,\lambda}$,
as well as of the ratio \eqref{eq:ratio} for $(\BGNpwf_m)^h$
and $(\BGNpwfwf_m)^\star$.
} 
\label{fig:pwftorus2}
\end{figure}%
Repeating the experiment with $\lambda=1$ gives the evolution shown in 
Figure~\ref{fig:pwftorus2lambda}.
In the case $\lambda=0$, 
the flow reduces the elastic energy and the absolute minimizer is given 
by geodesics which have geodesic curvature zero. 
However, in \revised{Figure~\ref{fig:pwftorus2lambda}} the elastic energy does 
not settle down to zero,
and the curves instead seem to converge to a non-trivial critical point of 
the elastic energy. This is in accordance with the analysis in
\cite{LangerS84}, which showed that in cases of hypersurfaces for which the 
Gaussian curvature is not non-negative at all points, the 
set of free elasticae, i.e., the set of critical points, is much richer.
\begin{figure}
\center
\includegraphics[angle=-90,width=0.3\textwidth]{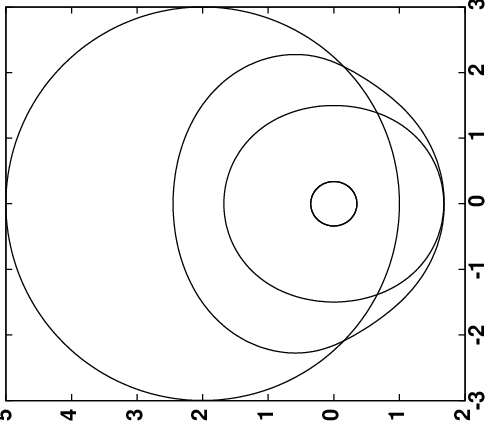}\qquad
\includegraphics[angle=-90,width=0.4\textwidth]{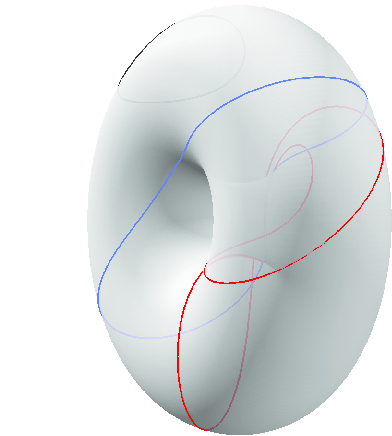} \\[2mm]
\includegraphics[angle=-90,width=0.3\textwidth]{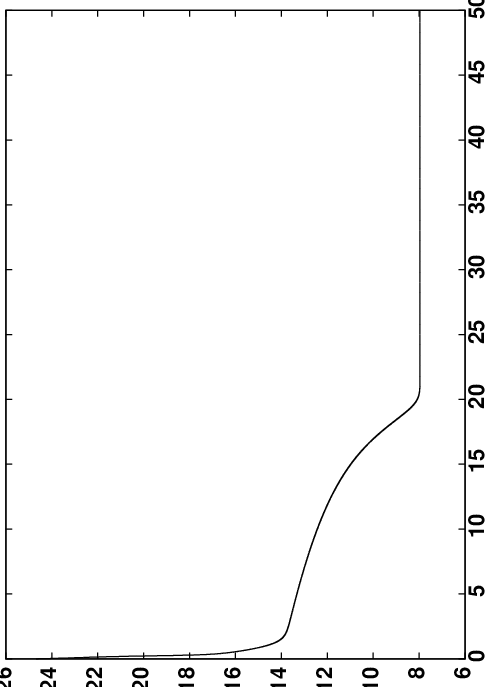}
\includegraphics[angle=-90,width=0.3\textwidth]{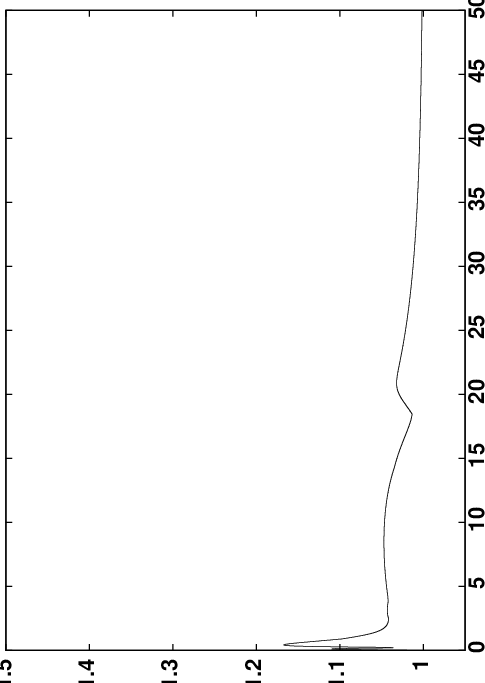}
\includegraphics[angle=-90,width=0.3\textwidth]{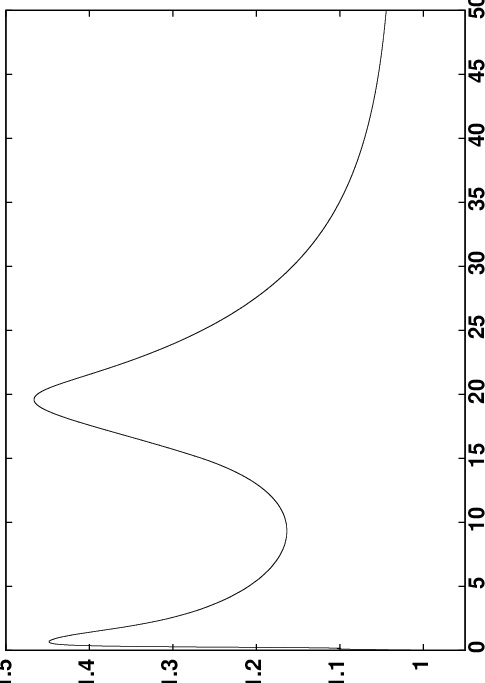}
\caption{
$(\BGNpwf_m)^h$
Generalized geodesic elastic flow, with $\lambda=1$, on a Clifford torus.
The solutions $\vec X^m$
at times $t = 0, 1, 10, 30, 50$. On the right we visualize
$\vec\Phi(\vec X^m)$ at times $t=0$ (red), $t=10$ (blue) and $t=50$ (black), 
for \eqref{eq:gtorus} with $\mathfrak s=1$.
Below a plot of the discrete energy $W^{m+1}_{g,\lambda}$,
as well as of the ratio \eqref{eq:ratio} for $(\BGNpwf_m)^h$
and $(\BGNpwfwf_m)^\star$.
} 
\label{fig:pwftorus2lambda}
\end{figure}%

\revised{%
We end this section with some computations of geodesic elastic flow on 
the unit sphere, inspired, for example, by the numerical results presented
in \cite[Figs.\ 1--8]{BrunnettC94} and \cite[Figs.\ 36, 37]{curves3d}.
To this end, we employ the metric induced by (\ref{eq:gMercator}),
which means that $\vec\Phi(H)$, the surface on which we compute geodesic 
elastic flow, is the unit sphere without the north and the south pole.
In particular, geodesic elastic flow evolutions on the unit sphere that pass
through these poles cannot be computed with our formulation. We demonstrate
this problem with a first simulation for (\ref{eq:gMercator}). The initial data
is chosen such that elastic flow on the unit sphere would lead to a simple
covering of a great circle. But as the curve would need to pass through the
north pole, this represents a blow-up in $H$ in finite time, and so our
approximation cannot compute the evolution beyond the crossing of the pole.
As initial data we choose}
\revised{%
a unit circle in $H$ centred at $2\,\vec\ek_1$.
For the simulation in Figure~\ref{fig:pwfspherepole} we use the
scheme $(\BGNpwf_m)^h$ with the discretization parameters
$J=256$ and $\ttau=10^{-6}$. We observe that $\Gamma^m$ expands into an
ellipse-like shape in $H$, leading to a blow-up to infinity some time after
$t=0.005$.}
\begin{figure}
\center
\includegraphics[angle=-90,width=0.55\textwidth]{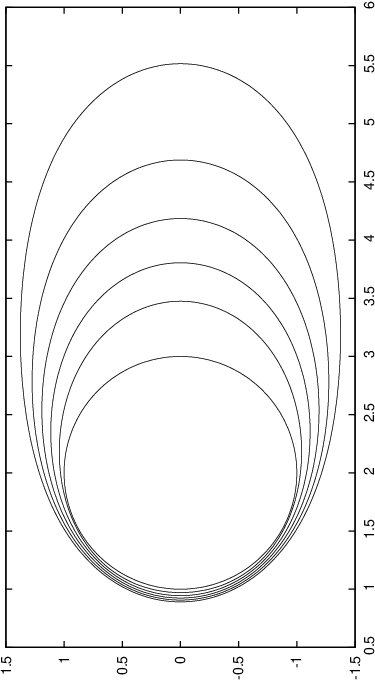}\quad
\includegraphics[angle=-90,width=0.4\textwidth]{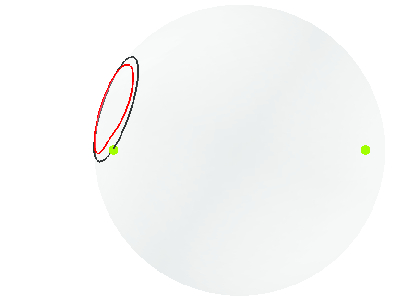} \\[2mm]
\includegraphics[angle=-90,width=0.3\textwidth]{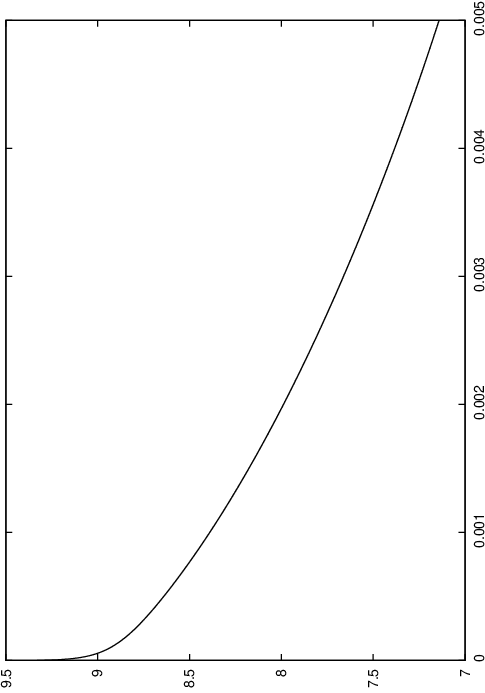}
\includegraphics[angle=-90,width=0.3\textwidth]{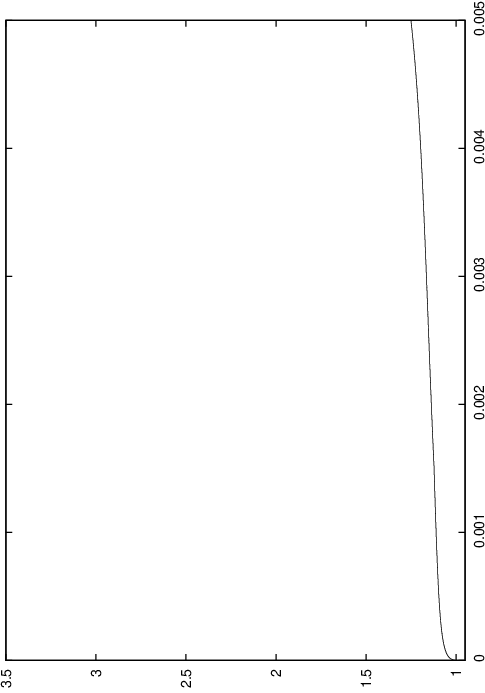}
\includegraphics[angle=-90,width=0.3\textwidth]{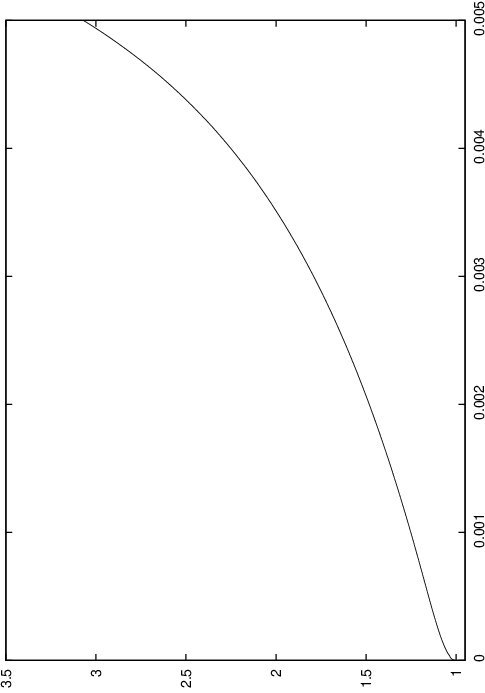}
\caption{
$(\BGNpwf_m)^h$
Geodesic elastic flow on the unit sphere.
The solutions $\vec X^m$ at times $t = 0, 0.001, \ldots, 0.005$. 
On the right we visualize
$\vec\Phi(\vec X^m)$ at times $t=0$ (red) and $t=0.005$ (black), 
for \eqref{eq:gMercator}, with the two poles, $\pm\vec\ek_3$, 
represented by green dots.
Below a plot of the discrete energy $W^{m+1}_{g,\lambda}$,
as well as of the ratio \eqref{eq:ratio} for $(\BGNpwf_m)^h$
and $(\BGNpwfwf_m)^\star$.
} 
\label{fig:pwfspherepole}
\end{figure}%
\revised{%
We remark that an alternative to the Mercator projection 
is given}
\revised{%
by the stereographic projection of the unit sphere, so that only one
of the two poles is missing, rather than two. This allows for the computation
of a slightly larger class of evolutions on the unit sphere.
Recall that the appropriate $g$ is defined by \eqref{eq:galpha} with 
$\alpha=-1$.}

\revised{
A more involved simulation is shown in Figure~\ref{fig:pwfsphere},
where we choose as initial data a $2\times8$ ellipse in $H$ centred at the 
origin and use the
scheme $(\BGNpwf_m)^h$ with the discretization parameters
$J=256$ and $\ttau=10^{-3}$.
We observe the evolution of the initial curve
towards a triple covering of a great circle}
\revised{on the sphere. Note that eventually the solution would like to settle
on the two poles, which would represent a singularity for the flow in $H$.
\begin{figure}
\center
\includegraphics[angle=-90,width=0.4\textwidth]{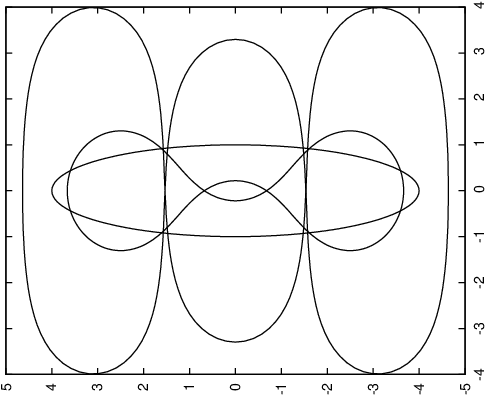}\qquad
\includegraphics[angle=-90,width=0.4\textwidth]{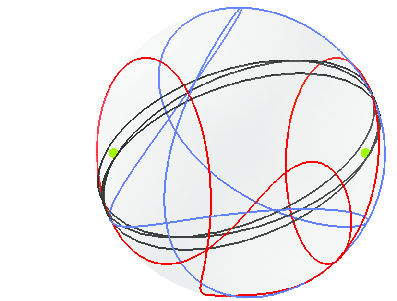} \\[2mm]
\includegraphics[angle=-90,width=0.3\textwidth]{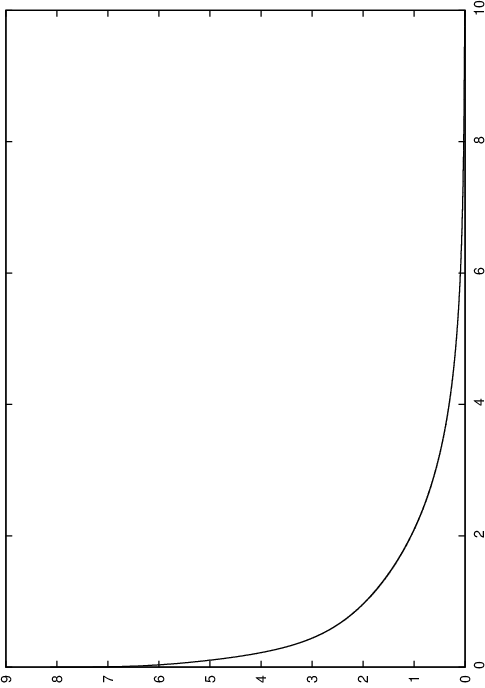}
\includegraphics[angle=-90,width=0.3\textwidth]{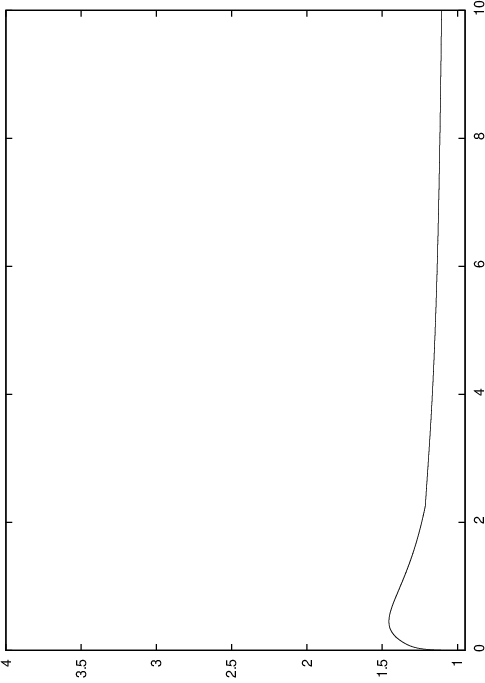}
\includegraphics[angle=-90,width=0.3\textwidth]{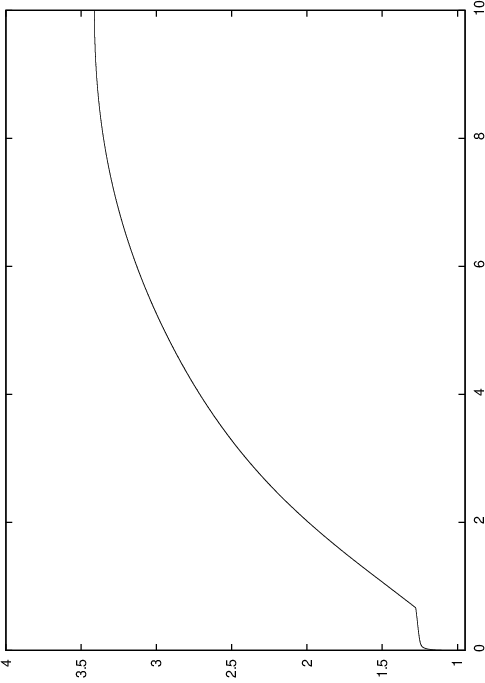}
\caption{
$(\BGNpwf_m)^h$
Geodesic elastic flow on the unit sphere.
The solutions $\vec X^m$ at times $t = 0, 1, 10$. On the right we visualize
$\vec\Phi(\vec X^m)$ at times $t=0$ (red), $t=1$ (blue) and $t=10$ (black), 
for \eqref{eq:gMercator}, with the two poles, $\pm\vec\ek_3$, 
represented by green dots.
Below a plot of the discrete energy $W^{m+1}_{g,\lambda}$,
as well as of the ratio \eqref{eq:ratio} for $(\BGNpwf_m)^h$
and $(\BGNpwfwf_m)^\star$.
} 
\label{fig:pwfsphere}
\end{figure}%
In Figure~\ref{fig:pwfspherelambda} we show the same evolution for
$\lambda=0.4$. We note that the final shape is not a steady state.
Of course, by considering a length-preserving variant,
where the parameter $\lambda$ depends on time, steady state solutions as shown
in e.g.\ \cite[Figs.\ 36, 37]{curves3d}
could also be produced by the numerical schemes presented here.}
\revised{%
However, as this goes beyond the scope of the paper, we omit such details
and the corresponding evolutions here.
\begin{figure}
\center
\includegraphics[angle=-90,width=0.25\textwidth]{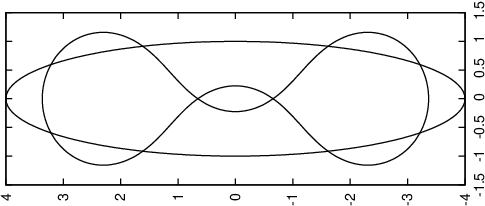}\qquad
\includegraphics[angle=-90,width=0.4\textwidth]{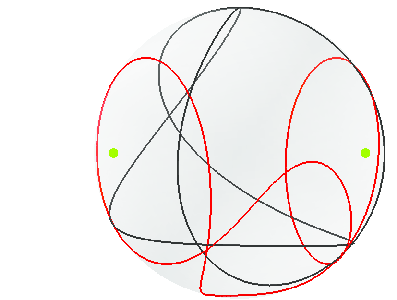} \\[2mm]
\includegraphics[angle=-90,width=0.3\textwidth]{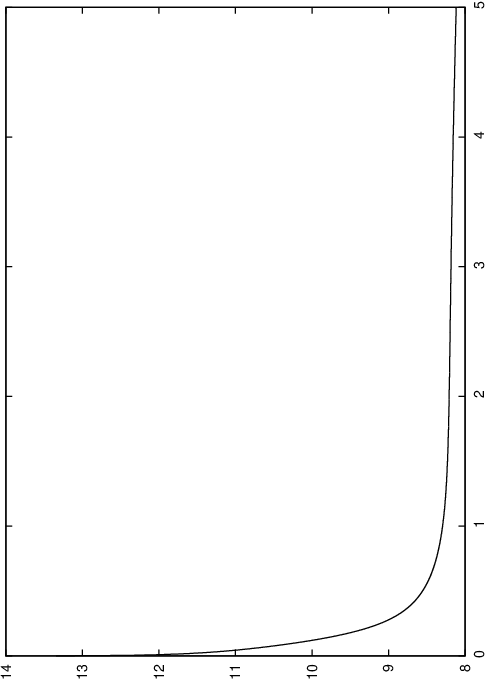}
\includegraphics[angle=-90,width=0.3\textwidth]{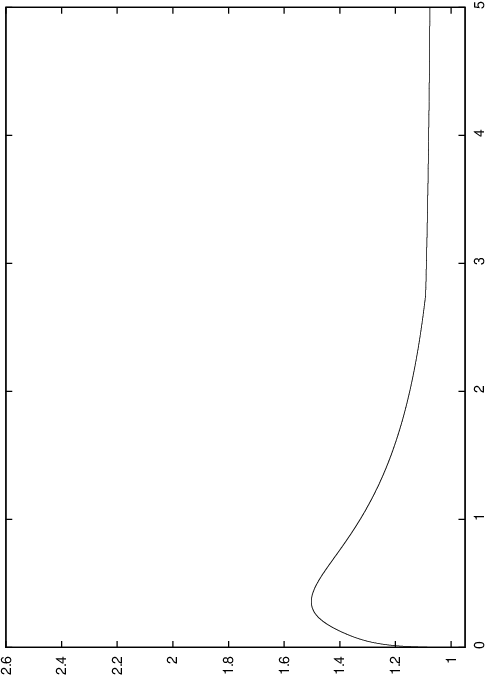}
\includegraphics[angle=-90,width=0.3\textwidth]{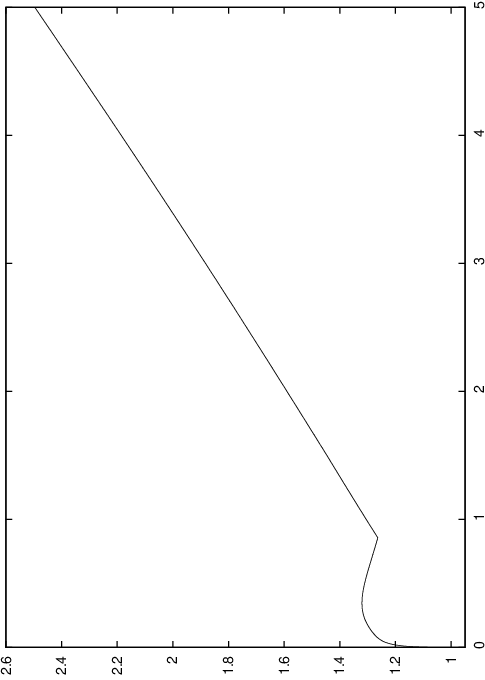}
\caption{
$(\BGNpwf_m)^h$
Generalized geodesic elastic flow, with $\lambda=0.4$, on the unit sphere.
The solutions $\vec X^m$ at times $t = 0, 5$. On the right we visualize
$\vec\Phi(\vec X^m)$ at times $t=0$ (red) and $t=5$ (black), 
for \eqref{eq:gMercator}, with the two poles, $\pm\vec\ek_3$, 
represented by green dots.
Below a plot of the discrete energy $W^{m+1}_{g,\lambda}$,
as well as of the ratio \eqref{eq:ratio} for $(\BGNpwf_m)^h$
and $(\BGNpwfwf_m)^\star$.
} 
\label{fig:pwfspherelambda}
\end{figure}%
}

\section*{\revised{Conclusions}}
\revised{%
We have derived and analysed two finite element schemes for the numerical
approximation of elastic flow in two-dimensional Riemannian manifolds. 
The Riemannian manifolds that can be considered in our framework
include the hyperbolic}
\revised{plane, the hyperbolic disk and the elliptic plane. 
More generally, any metric conformal to the two-dimensional Euclidean metric
can be considered. 
An example of}
\revised{this are two-dimensional manifolds in $\bR^d$, $d\geq3$, 
which are conformally parameterized.}

\revised{%
Our numerical simulations are based on the fully discrete schemes
$(\BGNpwf_m)^h$, $(\BGNpwfwf_m)^h$ and $(\BGNpwfwf_m)^\star$. Due to the
non-optimal convergence rates exhibited by $(\BGNpwfwf_m)^h$ in some numerical
experiments, as well as numerical breakdown in others,
we would advocate to use either $(\BGNpwf_m)^h$ or 
$(\BGNpwfwf_m)^\star$ in practice. Here the former has the advantage that the
vertices will be nearly equidistributed in practice, and that the assembly of
the linear systems is easier due the use of a mass-lumping quadrature.}

\begin{appendix}

\renewcommand{\theequation}{A.\arabic{equation}}
\setcounter{equation}{0}
\section{Consistency of weak formulations} \label{sec:A}

In this appendix we prove that solutions to $(\BGNpwf)$ and
$(\BGNpwfwf)$ indeed satisfy the strong form (\ref{eq:g_elastflowlambda}). 
Throughout this appendix we suppress the dependence of $g$ on $\vec x$.

For later use we note, on recalling (\ref{eq:tau}), (\ref{eq:varkappa})
and (\ref{eq:Gaussg}), that
\begin{subequations}
\begin{align}
\vec\nu_s & = -\varkappa\,\vec\tau\,,\label{eq:nus} \\
g^{-\frac12}\,(g^\frac12)_s & = -g^{\frac12}\,(g^{-\frac12})_s = 
\tfrac12\,(\ln\,g)_s \,,\label{eq:gm12g12s} \\
(\ln\,g)_s &= \vec\tau\,.\,\nabla\,\ln\,g\,,\label{eq:lngs} \\
(\ln\,g)_{ss} &
= \vec\tau\,.\,(\nabla\,\ln\,g)_s + \vec\tau_s\,.\,\nabla\,\ln\,g 
= \vec\tau\,.\,(D^2\,\ln\,g)\,\vec\tau +
\varkappa\,\vec\nu\,.\,\nabla\,\ln\,g\,, \label{eq:lngss} \\
(\vec\nu\,.\,\nabla\,\ln\,g)_s &= \vec\nu\,.\,(\nabla\,\ln\,g)_s +
\vec\nu_s\,.\,\nabla\,\ln\,g = \vec\nu\,.\,(D^2\,\ln\,g)\,\vec\tau
- \varkappa\,(\ln\,g)_s \,, \label{eq:nunablalngs} \\
g^{-\frac12}\,(\varkappa_g)_{ss} & 
= g^{-\frac12}\,(g^\frac12\,(\varkappa_g)_{s_g})_s 
= g^\frac12\,(\varkappa_g)_{s_gs_g} + g^{-\frac12}\,(g^\frac12)_s\,
  (\varkappa_g)_{s_g} \nonumber \\ &
= g^\frac12\,(\varkappa_g)_{s_gs_g} + \tfrac12\,(\ln g)_s\,(\varkappa_g)_{s_g} 
\,, \label{eq:appendix4} \\ 
-2\,g\,S_0(\vec x) & = \Delta\,\ln\,g
= \vec\nu\,.\,(D^2\,\ln\,g)\,\vec\nu + \vec\tau\,.\,(D^2\,\ln\,g)\,\vec\tau 
\,. \label{eq:2gS0}
\end{align}
\end{subequations}

\subsection{$(\BGNpwf)$}
We note from (\ref{eq:kappaid}), (\ref{eq:varkappag}) and (\ref{eq:nus}) that
\begin{equation} \label{eq:ydotnu}
\vec y\,.\,\vec\nu = \varkappa_g
= g^{-\frac12}(\vec{x})\left[\varkappa -\tfrac12\,
  \vec{\nu}\,.\, \nabla\, \ln g(\vec{x})\right]
\quad\text{and}\quad
\vec y_s\,.\,\vec\nu 
= (\varkappa_g)_s + \varkappa\,\vec y\,.\,\vec\tau\,, 
\end{equation}
and so it follows from (\ref{eq:dLdxflow2}), 
\revised{$\mathcal{V}_g = g^{\frac12}\,\vec{x}_t\,.\,\vec{\nu}$},
(\ref{eq:normg}), (\ref{eq:tau}) and (\ref{eq:aperp}) that
\begin{align}
& \left(g^\frac12\,\mathcal{V}_g, \vec\chi\,.\,\vec\nu\,
|\vec x_\rho|_g \right) 
= 
-\tfrac12 \left( g^\frac12\,[\varkappa_g^2 + 2\,\lambda], 
\vec\tau\,.\,\vec\chi_\rho\right)  
+ \tfrac14 \left( g^\frac12\,[\varkappa_g^2 - 2\,\lambda],
(\nabla\,\ln\,g)\,.\,\vec\chi\,|\vec x_\rho| \right)
\nonumber \\ & \qquad \qquad 
+ \tfrac12 \left( \varkappa_g\,(D^2\,\ln\,g)\,\vec\nu,
\vec\chi\,|\vec x_\rho| \right) 
-\tfrac12\left( \varkappa_g\,[\ln\,g]_s, \vec\nu\,.\,\vec\chi_\rho\right)
\nonumber \\ &\qquad \qquad  
+ \left(\left[ (\varkappa_g)_s + \varkappa\,\vec y\,.\,\vec\tau\right]
\vec\nu,\vec \chi_\rho\right)
+ \left( \left[g^\frac12\,\varkappa_g +
  \tfrac12\,\vec\nu\,.\,\nabla\,\ln\,g\right] \vec y^\perp,
\vec\chi_\rho \right)\nonumber \\
& \qquad = \tfrac12 \left( \left[g^\frac12\left[\varkappa_g^2 - 2\,\lambda\right]
+ \varkappa_g\,\vec\nu\,.\,\nabla\,\ln\,g \right]\vec\tau,\vec\chi_\rho \right)
\nonumber \\ & \qquad \qquad
+ \left( \left[ (\varkappa_g)_s + \varkappa\,\vec y\,.\,\vec\tau
 - \tfrac12\,\varkappa_g\,(\ln\,g)_s
- (\vec y\,.\,\vec\tau)\,(g^\frac12\,\varkappa_g + \tfrac12\,\vec\nu\,.\,
 \nabla\,\ln\,g) \right] \vec\nu, \vec\chi_\rho \right) \nonumber \\ & 
 \qquad \qquad
+\tfrac14 \left( g^\frac12 \left[\varkappa_g^2 - 2\,\lambda \right]  ,
 (\nabla\,\ln\,g)\,.\,\vec\chi\,|\vec x_\rho|\right) 
+ \tfrac12 \left(\varkappa_g\,(D^2\,\ln\,g)\,\vec\nu, \vec\chi\,
|\vec x_\rho| \right) \nonumber \\
&\qquad = \sum_{i=1}^4 S_i(\vec\chi)
\qquad \forall\ \vec\chi \in [H^1(I)]^2\,.
\label{eq:appendixS1}
\end{align}
Combining (\ref{eq:appendixS1}), 
\revised{\eqref{eq:varkappag}, integration by parts,}
(\ref{eq:varkappa}) and (\ref{eq:normg}) yields that
\begin{align}
S_1(\vec\chi) & = - \tfrac12 \left( g^\frac12\left[\varkappa_g^2 + 2\,\lambda
- 2\,g^{-\frac12}\,\varkappa\,\varkappa_g 
\right]\vec\tau,\vec\chi_\rho \right) \nonumber \\ & 
= \tfrac12 \left( \varkappa \left[\varkappa_g^2 + 2\,\lambda
- 2\,g^{-\frac12}\,\varkappa\,\varkappa_g 
\right]\vec\nu,\vec\chi\,|\vec x_\rho|_g \right) \nonumber \\ & \quad
+ \tfrac12 \left( g^{-\frac12}\left[ g^\frac12 \left[\varkappa_g^2 + 2\,\lambda
- 2\,g^{-\frac12}\,\varkappa\,\varkappa_g 
\right] \right]_s \vec\tau,\vec\chi\,|\vec x_\rho|_g \right) .
\label{eq:S1}
\end{align}
Combining (\ref{eq:appendixS1}) and (\ref{eq:ydotnu}),
on noting (\ref{eq:varkappag}), (\ref{eq:nus}), (\ref{eq:appendix4}) 
and (\ref{eq:normg}), yields that
\begin{align}
S_2(\vec\chi) & = 
\left( \left[ (\varkappa_g)_s - \tfrac12\,\varkappa_g\,(\ln\,g)_s\right]
\vec\nu, \vec\chi_\rho \right) \nonumber \\ & 
= - \left( g^{-\frac12}\left[
(\varkappa_g)_{ss} - \tfrac12\,((\ln\,g)_s\,\varkappa_g)_s 
\right] \vec\nu, \vec\chi\,|\vec x_\rho|_g \right) \nonumber \\ & \quad
+ \left( g^{-\frac12}\,\varkappa \left[
(\varkappa_g)_s - \tfrac12\,(\ln\,g)_s\,\varkappa_g \right] \vec\tau, 
\vec\chi\,|\vec x_\rho|_g \right) \nonumber \\ & 
= - \left( g^{\frac12}\,(\varkappa_g)_{s_gs_g} 
- \tfrac12\,g^{-\frac12}\,(\ln\,g)_{ss}\,\varkappa_g 
, \vec\chi\,.\,\vec\nu\,|\vec x_\rho|_g \right) \nonumber \\ & \quad
+ \left( \varkappa \left[ (\varkappa_g)_{s_g} 
- \tfrac12\,g^{-\frac12}\,(\ln\,g)_{s}\,\varkappa_g \right]
, \vec\chi\,.\,\vec\tau\,|\vec x_\rho|_g \right) .
\label{eq:S2}
\end{align}
Combining (\ref{eq:appendixS1}) and (\ref{eq:varkappag}),
on noting (\ref{eq:normg}) and (\ref{eq:lngs}), yields that
\begin{align}
S_3(\vec\chi) & 
= \tfrac14\left( \varkappa_g^2 - 2\,\lambda , 
(\nabla\,\ln\,g)\,.\,\vec\chi\,|\vec x_\rho|_g
\right) \nonumber \\ &
= \tfrac14\left( \varkappa_g^2 - 2\,\lambda , 
\left[2\,(\varkappa - g^\frac12\,\varkappa_g)
\,\vec\chi\,.\,\vec\nu + (\ln\,g)_s
\,\vec\chi\,.\,\vec\tau \right] |\vec x_\rho|_g \right) .
\label{eq:S3}
\end{align}
It follows from (\ref{eq:appendixS1}) and (\ref{eq:normg}) that
\begin{equation}
S_4(\vec\chi) 
= \tfrac12 \left(g^{-\frac12}\,\varkappa_g\,(D^2\,\ln\,g)\,\vec\nu , \left[
(\vec\chi\,.\,\vec\nu)\,\vec\nu + (\vec\chi\,.\,\vec\tau)\,\vec\tau \right]
|\vec x_\rho|_g \right) .
\label{eq:S4}
\end{equation}

Choosing $\vec\chi = \chi\,\vec\tau$, for $\chi \in H^1(I)$, in
(\ref{eq:appendixS1}), and noting (\ref{eq:S1}),
(\ref{eq:S2}), (\ref{eq:S3}) and (\ref{eq:S4}), we obtain for the right hand
side of (\ref{eq:appendixS1}) the value
\begin{align}
& \sum_{i=1}^4 S_i(\chi\,\vec\tau) 
= \tfrac12 \left( g^{-\frac12}\left[ g^\frac12 \left[\varkappa_g^2 + 2\,\lambda
- 2\,g^{-\frac12}\,\varkappa\,\varkappa_g 
\right] \right]_s , \chi\,|\vec x_\rho|_g \right) \nonumber \\ & \quad
+ \left( \varkappa \left[ (\varkappa_g)_{s_g} 
- \tfrac12\,g^{-\frac12}\,(\ln\,g)_{s}\,\varkappa_g \right]
, \chi\,|\vec x_\rho|_g \right) 
+ \tfrac14\left( \varkappa_g^2 - 2\,\lambda , (\ln\,g)_s\,
\chi\,|\vec x_\rho|_g \right) \nonumber \\ & \quad
+ \tfrac12 \left(g^{-\frac12}\,\varkappa_g\,\vec\tau\,.\,(D^2\,\ln\,g)\,\vec\nu 
, \chi \,|\vec x_\rho|_g \right) \nonumber \\ &
= \tfrac12
\left( g^{-\frac12}\,(g^\frac12)_s\left[
\varkappa_g^2 + 2\,\lambda - 2\,g^{-\frac12}\,\varkappa\,\varkappa_g \right]
+ \left[\varkappa_g^2 - 2\,g^{-\frac12}\,\varkappa\,\varkappa_g\right]_s
, \chi \,|\vec x_\rho|_g \right)
\nonumber \\ & \qquad \qquad
+ \left(\varkappa\left[ 
(\varkappa_g)_{s_g} - \tfrac12\,g^{-\frac12}\,(\ln\,g)_s\,\varkappa_g
\right]
+ (\tfrac14\,\varkappa_g^2 - \tfrac12\,\lambda)\,(\ln\,g)_s
, \chi \,|\vec x_\rho|_g \right)
\nonumber \\ & \qquad \qquad
+\tfrac12 \left(
g^{-\frac12}\,\varkappa_g\left[ (\vec\nu\,.\,\nabla\,\ln\,g)_s +
\varkappa\,(\ln\,g)_s \right] , \chi \,|\vec x_\rho|_g \right)
\nonumber \\ &
= \tfrac12 \left( (\ln\,g)_s \left[ \varkappa_g^2 -
g^{-\frac12}\,\varkappa\,\varkappa_g \right] 
+ \left[\varkappa_g^2 - 2\,g^{-\frac12}\,\varkappa\,\varkappa_g\right]_s
+ 2\,g^{-\frac12}\,\varkappa\,(\varkappa_g)_s, 
\chi \,|\vec x_\rho|_g \right)
\nonumber \\ & \qquad \qquad
+ \left( g^{-\frac12}\,\varkappa_g\,(\varkappa-g^{\frac12}\,\varkappa_g)_s, 
\chi \,|\vec x_\rho|_g \right) \nonumber \\ &
= \tfrac12\,\left( (\ln\,g)_s \left[ \varkappa_g^2 -
g^{-\frac12}\,\varkappa\,\varkappa_g 
+ g^{-\frac12}\,\varkappa\,\varkappa_g  - \varkappa_g^2 \right] , 
\chi \,|\vec x_\rho|_g \right)\nonumber \\ & \qquad \qquad
+\left(\varkappa_g\,(\varkappa_g)_s + g^{-\frac12}\left[
- (\varkappa\,\varkappa_g)_s + \varkappa\,(\varkappa_g)_s + 
\varkappa_g\,\varkappa_s \right] - \varkappa_g\,(\varkappa_g)_s, 
\chi \,|\vec x_\rho|_g \right) \nonumber \\ &
= 0\,,
\label{eq:appendixS7}
\end{align}    
as required, where we have recalled (\ref{eq:gm12g12s})
and (\ref{eq:varkappag}). 

Choosing $\vec\chi = \chi\,\vec\nu$, for $\chi \in H^1(I)$, in
(\ref{eq:appendixS1}), and noting (\ref{eq:S1}),
(\ref{eq:S2}), (\ref{eq:S3}) and (\ref{eq:S4}), we obtain 
\begin{align}
& \left(g^\frac12\,\mathcal{V}_g, \chi\,|\vec x_\rho|_g \right)
= \sum_{i=1}^4 S_i(\chi\,\vec\nu) 
= \tfrac12 \left( \varkappa \left[\varkappa_g^2 + 2\,\lambda
- 2\,g^{-\frac12}\,\varkappa\,\varkappa_g 
\right],\chi\,|\vec x_\rho|_g \right) \nonumber \\ & \quad
- \left( g^{\frac12}\,(\varkappa_g)_{s_gs_g} 
- \tfrac12\,g^{-\frac12}\,(\ln\,g)_{ss}\,\varkappa_g 
, \chi\,|\vec x_\rho|_g \right) \nonumber \\ & \quad
+ \tfrac12\left( \varkappa_g^2 - 2\,\lambda 
, (\varkappa - g^\frac12\,\varkappa_g)
\,\chi \,|\vec x_\rho|_g \right) 
+ \tfrac12 \left(g^{-\frac12}\,\varkappa_g\,\vec\nu\,.\,(D^2\,\ln\,g)\,\vec\nu 
, 
\chi\,|\vec x_\rho|_g \right)
\nonumber \\ & 
= \left( - g^{\frac12}\,(\varkappa_g)_{s_gs_g} 
+ g^\frac12\,\lambda\,\varkappa_g 
+ \tfrac12\,g^{-\frac12}\left[ \vec\tau\,.\,(D^2\,\ln\,g)\,\vec\tau
+ \vec\nu\,.\,(D^2\,\ln\,g)\,\vec\nu \right] \varkappa_g 
, \chi\,|\vec x_\rho|_g \right) \nonumber \\ & \qquad\qquad
+\tfrac12\left( \varkappa\,\varkappa_g^2 
- 2\,g^{-\frac12}\,\varkappa^2\,\varkappa_g +
  g^{-\frac12}\,\varkappa_g\,\varkappa\,\vec\nu\,.\,\nabla\,\ln\,g 
+ \varkappa_g^2\,(\varkappa - g^\frac12\,\varkappa_g)
, \chi\,|\vec x_\rho|_g \right) 
\nonumber \\ & 
= - \left( g^{\frac12}\left[(\varkappa_g)_{s_gs_g} 
+ \tfrac12\,\varkappa_g^3 + (S_0(\vec x) - \lambda)\,\varkappa_g\right]
, \chi\,|\vec x_\rho|_g \right) \nonumber \\ & \qquad\qquad
+ \left(g^{-\frac12}\,\varkappa\,\varkappa_g 
\left[g^\frac12\,\varkappa_g-\varkappa+\tfrac12\,\vec\nu\,.\,\nabla\,\ln\,g
\right] , \chi\,|\vec x_\rho|_g \right) \nonumber \\ & 
= - \left( g^{\frac12}\left[ 
(\varkappa_g)_{s_gs_g} + \tfrac12\,\varkappa_g^3 
+ (S_0(\vec x) - \lambda)\,\varkappa_g \right], \chi\,|\vec x_\rho|_g \right)
\qquad \forall\ \chi \in H^1(I)\,,
\label{eq:appendixS8}
\end{align}
where we have recalled (\ref{eq:lngss}), (\ref{eq:2gS0}) and
(\ref{eq:varkappag}).
Clearly, it follows from (\ref{eq:appendixS8}) that (\ref{eq:g_elastflowlambda})
holds.

\subsection{$(\BGNpwfwf)$}
It follows from (\ref{eq:BGNpwfwfa}),
\revised{$\mathcal{V}_g = g^{\frac12}\,\vec{x}_t\,.\,\vec{\nu}$},
(\ref{eq:kappagid}), (\ref{eq:tau}), (\ref{eq:normg}) and (\ref{eq:aperp}) that
\begin{align}
& \left(g^\frac12\,\mathcal{V}_g, \vec\chi\,.\,\vec\nu\,
|\vec x_\rho|_g \right) 
=
 -\tfrac12 \left( \varkappa_g^2 + 2\,\lambda
- \vec y_g\,.\,\nabla\,\ln\,g, \left[\vec\tau\,.\,\vec\chi_s + 
\tfrac12\,\vec\chi\,.\,\nabla\,\ln\,g\right] |\vec x_\rho|_g
 \right) 
\nonumber \\ & \qquad 
+ \tfrac12 \left((D^2\,\ln\,g)\,\vec y_g, \vec\chi\, |\vec x_\rho|_g \right) 
+  \left( \varkappa_g^2
+ \tfrac12\,(\vec y_g)_s\,.\,\vec\tau, (\nabla\,\ln\,g)\,.\,\vec\chi\,
|\vec x_\rho|_g \right) 
+ \left( g\,\varkappa_g\,\vec y_g^\perp,
\vec\chi_\rho\right) \nonumber \\ & \qquad 
+ \left( g^{\frac{1}{2}}(\vec y_g)_s\,.\,\vec\nu, 
\vec\chi_\rho\,.\vec \nu  \right) 
\nonumber \\ 
& \quad= -\tfrac12 \left( g^{\frac{1}{2}}\left[\varkappa_g^2 + 2\,\lambda
- \vec y_g\,.\,\nabla\,\ln\,g\right]\, \vec\tau,\vec\chi_\rho \right) 
 + \tfrac12 \left((D^2\,\ln\,g)\,\vec y_g, \vec\chi\,
|\vec x_\rho|_g \right)
\nonumber \\ & \qquad
+ \left( \revised{\tfrac14\left[3\,\varkappa_g^2 - 2\,\lambda+ \vec
y_g\,.\,\nabla\,\ln\,g  \right] + \tfrac12 \,(\vec y_g)_s \,.\,\vec\tau} ,
 (\nabla\,\ln\,g)\,.\,\vec\chi\,|\vec x_\rho|_g\right) 
\nonumber \\ & \qquad 
+ \left( g\,
\varkappa_g\,\vec y_g^\perp,
\vec\chi_\rho\right)
+ \left( g^{\frac{1}{2}}(\vec y_g)_s\,.\,\vec\nu, 
\vec\chi_\rho\,.\vec \nu  \right) 
\nonumber \\
 & \quad
 = \tfrac12 \left( g^\frac12\left[\varkappa_g^2 - 2\,\lambda
+ \vec y_g\,.\,\nabla\,\ln\,g \right]\vec\tau,\vec\chi_\rho \right)
+ \left( g^\frac12\,[(\vec y_g)_s\,.\,\vec\nu]\,\vec\nu
- g\,\varkappa_g\,(\vec y_g\,.\,\vec\tau)\,\vec\nu, 
\vec\chi_\rho \right) \nonumber \\ & \qquad
+ \left( \tfrac14\left[3\,\varkappa_g^2 - 2\,\lambda+ \vec
y_g\,.\,\nabla\,\ln\,g  
+ 2 \,(\vec y_g)_s \,.\,\vec\tau \right] ,
 (\nabla\,\ln\,g)\,.\,\vec\chi\,|\vec x_\rho|_g\right) \nonumber \\ & \qquad
+ \tfrac12 \left((D^2\,\ln\,g)\,\vec y_g, \vec\chi\,
|\vec x_\rho|_g \right) 
= \sum_{i=1}^4 T_i(\vec\chi)
\qquad \forall\ \vec\chi \in [H^1(I)]^2\,.
\label{eq:appendix1}
\end{align}
It follows from (\ref{eq:varkappag}), (\ref{eq:kappagid}) and 
(\ref{eq:lngs}) that
\begin{align}
\vec y_g\,.\,\nabla\,\ln\,g & 
= (\vec y_g\,.\,\vec\nu)\,\vec\nu\,.\,\nabla\,\ln\,g
+ (\vec y_g\,.\,\vec\tau)\,\vec\tau\,.\,\nabla\,\ln\,g \nonumber \\ &
= g^{-\frac12}\,\varkappa_g\,2\,(\varkappa - g^\frac12\,\varkappa_g)
+\vec y_g\,.\,\vec\tau\,(\ln\,g)_s 
= 2\,g^{-\frac12}\,\varkappa\,\varkappa_g - 2\,\varkappa_g^2 
+ \vec y_g\,.\,\vec\tau\,(\ln\,g)_s\,.
\label{eq:appendix2}
\end{align}
Combining (\ref{eq:appendix1}), (\ref{eq:appendix2}), (\ref{eq:S1}), 
(\ref{eq:varkappa}) and (\ref{eq:normg}) yields that
\begin{align}
& T_1(\vec\chi) = 
- \tfrac12 \left( g^\frac12\left[\varkappa_g^2 + 2\,\lambda
- 2\,g^{-\frac12}\,\varkappa\,\varkappa_g - \vec y_g\,.\,\vec\tau\,(\ln\,g)_s
\right]\vec\tau,\vec\chi_\rho \right) \nonumber \\ & 
= S_1(\vec\chi) + \tfrac12 \left( g^\frac12\,
\vec y_g\,.\,\vec\tau\,(\ln\,g)_s\,\vec\tau, \vec\chi_\rho \right) 
\nonumber \\ & 
= S_1(\vec\chi) - \tfrac12 \left( \varkappa 
\left[ \vec y_g\,.\,\vec\tau\,(\ln\,g)_s
\right]\vec\nu,\vec\chi\,|\vec x_\rho|_g \right)
- \tfrac12 \left( g^{-\frac12}\left[ g^\frac12 
\left[ \vec y_g\,.\,\vec\tau\,(\ln\,g)_s
\right] \right]_s \vec\tau,\vec\chi\,|\vec x_\rho|_g \right) .
\label{eq:T1}
\end{align}
It follows from (\ref{eq:kappagid}), (\ref{eq:nus}) and (\ref{eq:lngs}) that
\begin{align}
g^\frac12\,(\vec y_g)_s\,.\,\vec\nu & = \left[ g^\frac12\,\vec
y_g\,.\,\vec\nu\right]_s - \vec y_g\,.\left[ g^\frac12\,\vec\nu\right]_s
= (\varkappa_g)_s - (\vec y_g\,.\,\vec\nu)\,(g^\frac12)_s 
+ g^\frac12\,\varkappa\,\vec y_g\,.\,\vec\tau \nonumber \\ &
= (\varkappa_g)_s - \varkappa_g\,g^{-\frac12}\,(g^\frac12)_s 
+ g^\frac12\,\varkappa\,\vec y_g\,.\,\vec\tau
= (\varkappa_g)_s - \tfrac12\,(\ln\,g)_s\,\varkappa_g 
+ g^\frac12\,\varkappa\,\vec y_g\,.\,\vec\tau\,.
\label{eq:appendix3}
\end{align}
Combining (\ref{eq:appendix1}) and (\ref{eq:appendix3}),
on noting (\ref{eq:nus}), (\ref{eq:appendix4}) and (\ref{eq:normg}), 
yields that
\begin{align}
T_2(\vec\chi) & = - \left( g^{-\frac12}\left[
\revised{g^\frac12\,(\vec y_g)_s\,.\,\vec\nu}
- g\,\varkappa_g\,(\vec y_g\,.\,\vec\tau)\right]_s \vec\nu, 
\vec\chi\,|\vec x_\rho|_g \right) \nonumber \\ & \quad
+ \left( \varkappa \left[
\revised{(\vec y_g)_s\,.\,\vec\nu}
- g^\frac12\,\varkappa_g\,(\vec y_g\,.\,\vec\tau)\right] \vec\tau, 
\vec\chi\,|\vec x_\rho|_g \right) \nonumber \\ &
= - \left( g^{-\frac12}\left[
(\varkappa_g)_{ss} - \tfrac12\,((\ln\,g)_s\,\varkappa_g)_s +
\left[ \vec y_g\,.\,\vec\tau\,(g^\frac12\,\varkappa - g\,\varkappa_g)\right]_s 
\right] \vec\nu, \vec\chi\,|\vec x_\rho|_g \right) \nonumber \\ & \quad
+ \left( \varkappa \left[
\revised{(\vec y_g)_s\,.\,\vec\nu}
- g^\frac12\,\varkappa_g\,(\vec y_g\,.\,\vec\tau)\right] \vec\tau, 
\vec\chi\,|\vec x_\rho|_g \right) \nonumber \\ & 
= - \left( g^{\frac12}\,(\varkappa_g)_{s_gs_g} 
- \tfrac12\,g^{-\frac12}\,(\ln\,g)_{ss}\,\varkappa_g +
g^{-\frac12} \left[ 
\vec y_g\,.\,\vec\tau\,(g^\frac12\,\varkappa - g\,\varkappa_g)\right]_s 
, \vec\chi\,.\,\vec\nu\,|\vec x_\rho|_g \right) \nonumber \\ & \quad
+ \left( \varkappa \left[ (\varkappa_g)_{s_g} 
- \tfrac12\,g^{-\frac12}\,(\ln\,g)_{s}\,\varkappa_g +
\vec y_g\,.\,\vec\tau\,(\varkappa - g^\frac12\,\varkappa_g)\right]
, \vec\chi\,.\,\vec\tau\,|\vec x_\rho|_g \right) \nonumber \\ & 
= S_2(\vec\chi) - \left( g^{-\frac12} \left[ 
\vec y_g\,.\,\vec\tau\,(g^\frac12\,\varkappa - g\,\varkappa_g)\right]_s 
, \vec\chi\,.\,\vec\nu\,|\vec x_\rho|_g \right) \nonumber \\ & \quad
+ \left( \varkappa \,
\vec y_g\,.\,\vec\tau\,(\varkappa - g^\frac12\,\varkappa_g)
, \vec\chi\,.\,\vec\tau\,|\vec x_\rho|_g \right).
\label{eq:T2}
\end{align}
It follows from (\ref{eq:varkappa}) and (\ref{eq:kappagid}) that
\begin{equation} \label{eq:appendix5}
(\vec y_g)_s\,.\,\vec\tau 
= (\vec y_g\,.\,\vec\tau)_s - \vec y_g\,.\,\vec\tau_s 
= (\vec y_g\,.\,\vec\tau)_s - \varkappa\,\vec y_g\,.\,\vec\nu 
= (\vec y_g\,.\,\vec\tau)_s - g^{-\frac12}\,\varkappa\,\varkappa_g\,.
\end{equation}
Combining (\ref{eq:appendix1}), (\ref{eq:appendix2}), 
(\ref{eq:appendix5}) and (\ref{eq:varkappag}) yields that
\begin{align}
& T_3(\vec\chi)
= \tfrac14\left( 3\,\varkappa_g^2 - 2\,\lambda + \vec y_g\,.\,\nabla\,\ln\,g
+ 2\, (\vec y_g)_s\,.\,\vec\tau, (\nabla\,\ln\,g)\,.\,\vec\chi\,|\vec x_\rho|_g
\right) \nonumber \\ &
= \tfrac14\left( \varkappa_g^2 - 2\,\lambda + \vec y_g\,.\,\vec\tau\, (\ln\,g)_s
+ 2\, (\vec y_g\,.\,\vec\tau)_s, (\nabla\,\ln\,g)\,.\,\vec\chi\,|\vec x_\rho|_g
\right) \nonumber \\ &
= \tfrac14\left( \varkappa_g^2 - 2\,\lambda + \vec y_g\,.\,\vec\tau\, (\ln\,g)_s
+ 2\, (\vec y_g\,.\,\vec\tau)_s, \left[(\vec\nu\,.\,\nabla\,\ln\,g)
\,\vec\chi\,.\,\vec\nu + (\vec\tau\,.\,\nabla\,\ln\,g)
\,\vec\chi\,.\,\vec\tau \right] |\vec x_\rho|_g \right) \nonumber \\ &
= \tfrac14\left( \varkappa_g^2 - 2\,\lambda + \vec y_g\,.\,\vec\tau\, (\ln\,g)_s
+ 2\, (\vec y_g\,.\,\vec\tau)_s, \left[2\,(\varkappa - g^\frac12\,\varkappa_g)
\,\vec\chi\,.\,\vec\nu + (\ln\,g)_s
\,\vec\chi\,.\,\vec\tau \right] |\vec x_\rho|_g \right) \nonumber \\ &
= S_3(\vec\chi) + 
\tfrac14\left( \vec y_g\,.\,\vec\tau\, (\ln\,g)_s
+ 2\, (\vec y_g\,.\,\vec\tau)_s, \left[2\,(\varkappa - g^\frac12\,\varkappa_g)
\,\vec\chi\,.\,\vec\nu + (\ln\,g)_s
\,\vec\chi\,.\,\vec\tau \right] |\vec x_\rho|_g \right) .
\label{eq:T3}
\end{align}
It follows from (\ref{eq:kappagid}) that
\begin{align}
(D^2\,\ln\,g)\,\vec y_g & 
= \vec y_g\,.\,\vec\nu\, (D^2\,\ln\,g)\,\vec\nu 
+ \vec y_g\,.\,\vec\tau\, (D^2\,\ln\,g)\,\vec\tau \nonumber \\ & 
= g^{-\frac12}\,\varkappa_g\,(D^2\,\ln\,g)\,\vec\nu 
+ \vec y_g\,.\,\vec\tau\, (D^2\,\ln\,g)\,\vec\tau\,.
\label{eq:appendix6}
\end{align}
Combining (\ref{eq:appendix1}) and (\ref{eq:appendix6}) yields that
\begin{align}
T_4(\vec\chi) & 
= \tfrac12 \left(g^{-\frac12}\,\varkappa_g\,(D^2\,\ln\,g)\,\vec\nu 
+ \vec y_g\,.\,\vec\tau\, (D^2\,\ln\,g)\,\vec\tau, \vec\chi\,
|\vec x_\rho|_g \right) \nonumber \\ &
= \tfrac12 \left(g^{-\frac12}\,\varkappa_g\,(D^2\,\ln\,g)\,\vec\nu 
+ \vec y_g\,.\,\vec\tau\, (D^2\,\ln\,g)\,\vec\tau, 
\left[
(\vec\chi\,.\,\vec\nu)\,\vec\nu + (\vec\chi\,.\,\vec\tau)\,\vec\tau \right]
|\vec x_\rho|_g \right) \nonumber \\ &
= S_4(\vec\chi) + 
\tfrac12 \left( \vec y_g\,.\,\vec\tau\, (D^2\,\ln\,g)\,\vec\tau, 
\left[
(\vec\chi\,.\,\vec\nu)\,\vec\nu + (\vec\chi\,.\,\vec\tau)\,\vec\tau \right]
|\vec x_\rho|_g \right) .
\label{eq:T4}
\end{align}

Choosing $\vec\chi = \chi\,\vec\tau$, for $\chi \in H^1(I)$, in
(\ref{eq:appendix1}), and noting (\ref{eq:T1}),
(\ref{eq:T2}), (\ref{eq:T3}), (\ref{eq:T4}) and (\ref{eq:appendixS7}), 
we obtain for the right hand side of (\ref{eq:appendix1}) the value
\begin{align*}
& \sum_{i=1}^4 T_i(\chi\,\vec\tau) = \sum_{i=1}^4 S_i(\chi\,\vec\tau) 
+ \tfrac12 \left( -g^{-\frac12}\,(g^\frac12)_s\,(\ln\,g)_s 
- (\ln\,g)_{ss} , (\vec y_g\,.\,\vec\tau)\,\chi \,|\vec x_\rho|_g \right)
\nonumber \\ & \qquad \qquad
+ \tfrac12 \left( 2\,\varkappa\,(\varkappa - g^\frac12\,\varkappa_g)
+ \tfrac12\left[(\ln\,g)_s\right]^2
+ \vec\tau\,.\,(D^2\,\ln\,g)\,\vec\tau
, (\vec y_g\,.\,\vec\tau)\,\chi \,|\vec x_\rho|_g \right)
\nonumber \\ &
= \left( \varkappa\left[(\varkappa - g^\frac12\,\varkappa_g)
- \tfrac12\,\vec\nu\,.\,\nabla\,\ln\,g\right], 
(\vec y_g\,.\,\vec\tau)\,\chi \,|\vec x_\rho|_g \right)
= 0\,, 
\end{align*}    
as required, where we have recalled (\ref{eq:gm12g12s}), (\ref{eq:lngss}) 
and (\ref{eq:varkappag}). 

Choosing $\vec\chi = \chi\,\vec\nu$, for $\chi \in H^1(I)$, in
(\ref{eq:appendix1}), and noting (\ref{eq:T1}),
(\ref{eq:T2}), (\ref{eq:T3}), (\ref{eq:T4}) and (\ref{eq:appendixS8}), 
we obtain 
\begin{align}
& \left(g^\frac12\,\mathcal{V}_g, \chi\,|\vec x_\rho|_g \right) =
\sum_{i=1}^4 T_i(\chi\,\vec\nu) \nonumber \\ & 
= \sum_{i=1}^4 S_i(\chi\,\vec\nu) +
 \tfrac12\left( 
- \varkappa\,(\ln\,g)_s - 2\,g^{-\frac12}\left[g^\frac12\,(\varkappa -
  g^\frac12\,\varkappa_g) \right]_s  , (\vec y_g\,.\,\vec\tau)\,\chi\,|\vec x_\rho|_g \right)
\nonumber \\ & \qquad\qquad
+ \tfrac12\left((\ln\,g)_s\,(\varkappa - g^\frac12\,\varkappa_g) +
(\vec\nu\,.\,\nabla\,\ln\,g)_s +
\varkappa\,(\ln\,g)_s , (\vec y_g\,.\,\vec\tau)\,\chi\,|\vec x_\rho|_g \right)
\nonumber \\ & 
= - \left( g^{\frac12}\left[ 
(\varkappa_g)_{s_gs_g} + \tfrac12\,\varkappa_g^3 
+ (S_0(\vec x) - \lambda)\,\varkappa_g \right], \chi\,|\vec x_\rho|_g \right)
\qquad \forall\ \chi \in H^1(I)\,,
\label{eq:appendix8}
\end{align}
where we have recalled (\ref{eq:gm12g12s}) and
(\ref{eq:varkappag}).
Clearly, it follows from (\ref{eq:appendix8}) that (\ref{eq:g_elastflowlambda})
holds.
\end{appendix}

\noindent
{\bf Acknowledgements.}
The authors gratefully acknowledge the support 
of the Regensburger Universit\"atsstiftung Hans Vielberth.

\end{document}